\documentclass{article}[11pt]
\usepackage{latexsym}
\usepackage{amsmath}
\usepackage{amsfonts}
\usepackage{amssymb}
\usepackage{mathrsfs}
\usepackage{graphicx}	
\usepackage{amsthm}
\usepackage[top=2.5cm, bottom=2.5cm, left=2.5cm, right=2.5cm]{geometry}
\usepackage{hyperref}
\usepackage{slashed,mathtools,color}
\usepackage{upgreek}
\usepackage{bbm}
\usepackage{comment}
\usepackage{xcolor}
\usepackage{authblk}

\newcommand{\beq}{\begin{equation}}
\newcommand{\eeq}{\end{equation}}

\makeatletter

\makeatother

\theoremstyle{plain}
\newtheorem{theorem}{Theorem}[section]
\newtheorem{proposition}[theorem]{Proposition}
\newtheorem{lemma}[theorem]{Lemma}
\newtheorem{corollary}[theorem]{Corollary}

\theoremstyle{definition}

\newtheorem{remark}[theorem]{Remark}

\numberwithin{equation}{section}

\allowdisplaybreaks[4] 

\swapnumbers
\pagestyle{headings}
\parindent = 10 pt

\begin{document}

\title{Future dynamics of FLRW for the massless-scalar field system with positive cosmological constant}

\author{Grigorios Fournodavlos\footnote{Princeton University, Mathematics Department, Fine Hall,
 Washington Road, Princeton, NJ 08544-1000, USA. email: gf5797@princeton.edu}}

\date{}

\maketitle

\begin{abstract}
We consider solutions to the Einstein-massless-scalar field system with a positive cosmological constant, arising from sufficiently regular, near-FLRW, initial data. We establish global existence in the future direction and derive their precise asymptotic behavior towards infinity. 
As a corollary, we infer that, unlike the FLRW background, the perturbed solutions do not describe a regular irrotational stiff fluid with linear equation of state $p=\rho$, for general asymptotic data at infinity. The reason for the breakdown of this interpretation is that the gradient of the scalar field stops being timelike at large times, eventually becoming null and then spacelike near infinity. Our results hold for open sets of initial data in Sobolev spaces without symmetries.
\end{abstract}

\parskip = 0 pt

%
\section{Introduction}

The present paper is concerned with the large time behavior of solutions to the Einstein-massless-scalar field system with a positive cosmological constant:
\begin{align}
\label{EE}
{\bf Ric}_{\mu\nu}=&\,\Lambda{\bf g}_{\mu\nu}+\partial_\mu\psi\partial_\nu\psi,\\
\label{divT}\square_{\bf g}\psi=&\,0,
\end{align}
whose initial data ($\Sigma_0=\mathbb{T}^3,\mathring{g},\mathring{k},\mathring{\psi},\mathring{\varphi}$) are perturbations of the initial data induced by the FLRW solution \eqref{FLRW.sol} on $\{t=0\}$, without symmetries and of Sobolev regularity, satisfying the constraint equations: 
\begin{align}
\label{Hamconst.intro}\mathring{R}-|\mathring{k}|^2+(\text{tr}\mathring{k})^2=&\,2\Lambda+\mathring{\varphi}^2+|\mathring{\nabla}\mathring{\psi}|^2,\\
\label{momconst.intro}\mathring{\text{div}}\mathring{k}-\text{d}\text{tr}\mathring{k}=&-\mathring{\varphi}\mathring{\nabla}\mathring{\psi},
\end{align}
where $\mathring{g}$ is a Riemannian metric, $\mathring{\nabla}$ is the Levi-Civita connection of $\mathring{g}$ (with respect to which all covariant spatial operations are defined on $\Sigma_0$),  $\mathring{R}$ is the scalar curvature of $\mathring{g}$, $\mathring{k}$ is a symmetric 2-tensor, and $\mathring{\psi},\mathring{\varphi}$ are functions on $\Sigma_0$. The classical theorem of Choquet-Bruhat--Geroch \cite{BG} implies that for each sufficiently regular initial data set, satisfying \eqref{Hamconst.intro}-\eqref{momconst.intro}, there exists a corresponding unique (up to isometry) maximal development $({\bf g},\mathcal{M})$, along with an embedding $i:\Sigma_0\to\mathcal{M}$ such that $i(\Sigma_0)$ is a Cauchy hypersurface, the pull backs of its first and second fundamental forms are equal to $\mathring{g},\mathring{k}$, and the pull backs of the initial values of the scalar field and its derivative with respect to the future unit normal to $i(\Sigma_0)$ are equal to $\mathring{\psi},\mathring{\phi}$ respectively.\footnote{For simplicity, we abuse notation by not referring to this identification of the initial data through the embedding in the rest of the paper.} In this paper, $i(\Sigma_0)$ coincides with the level set $\{t=0\}$ of an appropriately normalized time function $t$.

Our main results show that the spacetime metrics exist globally in the future direction, ie. over $[0,+\infty)\times\mathbb{T}^3$, where the time parameter coincides to leading order with proper time near future null infinity, which in this context is spacelike, and the spatial part of the metrics converges exponentially to a near-isotropic end state (see Theorem \ref{thm:main}). Moreover, the scalar field converges to a limit function with its time derivative tending exponentially to zero. One could interpret these results as future stability of FLRW, albeit the time derivative of the scalar field decaying to zero at a different rate than that of the background, cf. \eqref{thm.main.est2}, \eqref{red.var.FLRW}. Interestingly, the perturbed solutions break down, in general, when viewed as irrotational stiff fluids with linear equation of state $p=\rho$, due to the gradient of the scalar field becoming spacelike near infinity (see Corollary \ref{cor:fluid}). 
We elaborate more on this phenomenon in Section \ref{subsec:thm} (see the discussion before Corollary \ref{cor:fluid}).

There has been a lot of work on the global dynamics of cosmological solutions with an accelerated expansion. In fact, the first global understanding of dynamical solutions to the Einstein equations \eqref{EE}, without symmetries, is due to Friedrich \cite{Fr} in the vacuum case (ie. $\psi=0$, $\Lambda>0$). Friedrich's classical result has been subsequently generalized to higher dimensions \cite{And,FK} and for spatial sections of negative scalar curvature \cite{MV}. An important breakthrough was achieved by Ringstr\"om \cite{Rin}, for the Einstein-scalar field system with a potential, where he studied the global future behavior of solutions with close to trivial initial data and general (compact) spatial topologies. In his work, the specific potential is driving the accelerated expansion, without having to add a cosmological constant to the system. Although the model studied in the present paper is not covered by the results in \cite{Rin}, since the effect of the cosmological constant is only implemented by the scalar field potential, the methods developed therein could be adapted to also treat the massless-scalar field with $\Lambda>0$, but with less than optimal estimates for the scalar field.\footnote{Compare (29) in \cite[Theorem 2]{Rin} with \eqref{thm.main.est2}-\eqref{thm.main.est3} in Theorem \ref{thm:main} below. The sharp decay for the variables $\psi-\psi^\infty,e_0\psi$ in our model is $e^{-2Ht}$, whereas \cite[Theorem 2]{Rin} gives $e^{-\alpha Ht}$ decay for some $\alpha>0$. See also the asymptotic expansion \eqref{e0psi.hat.exp} in Theorem \ref{thm:exp}.}
A benefit of our approach, outlined in Section \ref{subsec:approach}, is that it offers a simple and direct way of understanding the global future dynamics of the relevant solutions in the near-FLRW regime, as well as deriving their precise asymptotic behavior at infinity (see Theorems \ref{thm:main}, \ref{thm:exp}). Similar results for the present model have only been previously obtained in spherically symmetry by Costa-Mena \cite{CM} and earlier by Costa-Alho-Nat\'ario \cite{CAN2}, in the near-de Sitter regime (ie. $\psi\sim0$), for a different spatial topology. Precise future asymptotics of cosmological solutions and their various fields have also been investigated in a series of papers by Ashtekar-Bonga-Kesavon \cite{ABK,ABK2,ABK3}, with the aim of developing a theory for gravitational waves that includes $\Lambda>0$. 
Other models that have  generated a lot of interest include the Einstein-Vlasov system with a positive cosmological constant, we refer the reader notably to the works of Andr\'easson-Ringstr\"om \cite{AR} and Ringstr\"om \cite{RinBook}, and to the references therein for a detailed account of related results in the literature.

Our work is in part motivated by the outstanding progress that has been made in the recent years on the understanding of the future dynamics of the FLRW family of solutions to the Einstein-Euler system with a positive cosmological constant and linear equation of state $p=c_s^2\rho$, $c_s\in[0,1]$. Global future stability was first established, for $0<c_s^2<\frac{1}{3}$, by Rodnianski-Speck \cite{RS1} in the irrotational case and by Speck \cite{Sp1} in the presence of vorticity. Remarkably, their work demonstrated that an accelerated expansion can silence fluid degeneracies, like shocks, which are in general expected to occur otherwise.\footnote{See for example Christodoulou's breakthrough work \cite{Christ} on the formation of shocks for the relativistic Euler equations in Minkowski spacetime.}\footnote{Fluid stabilisation, as an effect of an accelerated expansion, was first rigorously established by Brauer-Rendall-Reula \cite{BRR} for Newtonian cosmological models.} The dust case $c_s=0$ was treated in \cite{HS} by Had\v zi\'c-Speck. L\"ubbe--Valiente-Kroon showed that the radiation case $c_s^2=\frac{1}{3}$ can be handled via conformal methods \cite{LK}, in the spirit of Friedrich's original work \cite{Fr}, see also \cite{V} for an extensive account of the applicability of their methods. More recently, Oliynyk introduced a new conformal approach \cite{Ol1} that provides a uniform treatment of future stability in the parameter range $0<c_s^2\leq \frac{1}{3}$. 

Prior to these future stability results, analytic solutions for the Einstein-Euler system with a positive cosmological constant and linear equation of state $p=c_s^2\rho$ were constructed by Rendall \cite{Ren}, for all $c_s\in[0,1)$, where he also computed their full asymptotic expansions at infinity. In the parameter range $0\leq c_s^2\leq\frac{1}{3}$, the asymptotic behavior of the solutions agrees with that observed in the stability works cited above. However, for $c_s^2>\frac{1}{3}$, he noticed that large spatial gradients might cause an abrupt change in the behavior of the fluid variables, which he called reminiscent of spikes that are found near singularities in Gowdy spacetimes \cite{RW,Rin2}. He then speculated that the latter phenomenon could lead to a blowup of the density contrast,\footnote{This is a parameter used in galaxy formation \cite{Grametal} to indicate local enhancements in matter density. In the context of \cite{Ren}, it is equal to $\nabla_a\rho/\rho$, where $\nabla_a$ stands a spatial derivative.} in the parameter range $\frac{1}{3}<c_s^2<1$.

We should also mention here two related works for the relativistic Euler equations with linear equation of state $p=c_s^2\rho$, in a fixed FLRW background, that have recently appeared. In \cite{Ol2}, Oliynyk proved the future stability of certain homogeneous solutions for $\frac{1}{3}<c_s^2<\frac{1}{2}$, suggesting that a similar result may be possible for the coupled Einstein-Euler system. This is the first future stability result going beyond the radiation threshold $c_s^2=\frac{1}{3}$, however, for a class of background homogeneous solutions having non-zero spatial fluid velocity. The second work by Fajman-Oliynyk-Wyatt \cite{FOW} concerns the future stability of (trivial) homogeneous solutions for the irrotational relativistic Euler equations with linear equation of state $p=c_s^2\rho$, in the parameter range $0<c_s^2<\frac{1}{3}$, in the absence of a cosmological constant ($\Lambda=0$), establishing the first case of fluid stabilization with a non-accelerated expansion outside of dust ($p=0$) \cite{FOW2}.

Apart from solutions with compact spatial topology, Schlue \cite{Sc1} has studied the dynamics of the expanding region in Schwarzschild-de Sitter, where the spatial topology near infinity is the cylindrical $\mathbb{R}\times\mathbb{S}^2$, proving a conditional future stability in vacuum, see also \cite{Sc2}. Additional difficulties in that context include deriving uniform estimates up to and including the timelike infinities, where the perturbed solutions converge to a slowly rotating member of the Kerr-de Sitter family (possibly different at each end). Moreover, estimates have to be propagated from the cosmological horizons in the interior of the expanding region. The data along the former are furnished by the stability of the Kerr-de Sitter exterior region, established by Hintz-Vasy in their breakthrough work \cite{HV}. In spherical symmetry, the analogous problem for the Einstein-Maxwell-scalar field system with a positive cosmological constant has been resolved by Costa-Nat\'ario-Oliveira \cite{CNO2}, for general data along the cosmological horizons where only mild convergence to a Reissner-Nordstr\"om-de Sitter black hole is assumed.

Finally, there have been numerous works by various authors proving exponential decay for linear waves in expanding regions of explicit solutions \cite{AC,CAN,CNO,GY,KS,Sc}, as a prelude to non-linear results. We also refer the reader to the work of Ringstr\"om \cite{Rin3} for an extensive study of solutions to linear wave systems in general cosmological backgrounds with convergent asymptotics.

\subsection{The FLRW model}\label{subsec:model}

The solutions that we are interested in are perturbations of the FLRW model. The latter is defined for all $(t,x)\in(-\infty,+\infty)\times\mathbb{T}^3$ (see Lemma \ref{lem:FLRW}):
\begin{align}\label{FLRW.sol}
{\bf g}_{FLRW}=-dt^2+a^2(t)\sum_{i=1}^3(dx^i)^2,&&\psi_{FLRW}(t)=\mathring{\psi}_{FLRW}+\mathring{\varphi}_{FLRW}\int^t_0\frac{\mathring{a}^3}{a^3(\tau)}d\tau,
\end{align}
where $\mathring{a}=\mathring{a}(0),\mathring{\psi}_{FLRW}=\psi_{FLRW}(0),\mathring{\varphi}_{FLRW}=\partial_t\psi_{FLRW}(0)\in\mathbb{R}$ are given constants representing the FLRW initial data for $a(t),\psi_{FLRW}(t)$, and
\begin{align}\label{aFLRW}
a(t)=\mathring{a}\bigg(\sqrt{\frac{\mathring{\varphi}_{FLRW}^2}{\Lambda}+1}\sinh\sqrt{3\Lambda}t+\cosh\sqrt{3\Lambda}t\bigg)^\frac{1}{3}.
\end{align}
Note that in the absence of a scalar field, ie. $\mathring{\psi}_{FLRW}=\mathring{\varphi}_{FLRW}=0$, \eqref{FLRW.sol} reduces to the vacuum solution:
\begin{align}\label{vac.sol}
{\bf g}_{vacuum}=-dt^2+\mathring{a}^2e^{2Ht}\sum_{i=1}^3(dx^i)^2,&&H=\sqrt{\frac{\Lambda}{3}}.
\end{align}
One can easily check that ${\bf g}_{vacuum}$ describes (up to a multiple) the leading order behavior of ${\bf g}_{FLRW}$ at $+\infty$, while the scalar field $\psi_{FLRW}$ decays exponentially to a constant (see Lemma \ref{lem:FLRW}).

\subsection{Our approach to the dynamical problem}\label{subsec:approach}

The framework we adopt to control the perturbed solutions is inspired by the stable Big Bang formation results, in the $\Lambda=0$ case, that have been achieved in the past few years \cite{FRS,RS2,RS3,RS4,Sp2}. However, the present situation is tremendously simpler, since instead of finite time blowup with anisotropic parameters, we derive the exponential decay of solutions to a near-isotropic state in the future direction. The main ingredients that we borrow from the former works concern the $1+3$ splitting of spacetime relative to a time function satisfying an infinite speed of propagation gauge, namely, a parabolic equation for the lapse of its level sets, and the formulation of the Einstein equations relative to a Fermi propagated orthonormal frame, as an ADM-type system for the frame coefficients and its associated connection coefficients. The benefit of such a gauge is that, as we show in Section \ref{sec:main.est}, all the reduced variables, minus their FLRW values, decay exponentially with rates that are easily readable from the ODE part of the resulting evolutionary system (see Lemma \ref{lem:red.eq.2}). 

More precisely, the perturbed spacetime metric in our setup takes the form 
\begin{align}\label{metric}
{\bf g}=-n^2dt^2+g=-n^2dt^2+g_{ij}dx^idx^j,
\end{align}
summing over repeated lower and upper indices.
Here, $n$ is the lapse of the $t$-foliation, $\Sigma_t$, while the shift vector field is set to zero. $g$ is the induced Riemannian metric on $\Sigma_t$, expressed with respect to a holonomic basis of vector fields $\partial_1,\partial_2,\partial_3$. Note that although $\Sigma_t=\{t\}\times\mathbb{T}^3$ cannot be covered by a single chart of spatial coordinates $x^1,x^2,x^3$, the vector fields $\partial_1,\partial_2,\partial_3$ are well-defined everywhere.

Next, we consider a ${\bf g}$-orthonormal frame $\{e_\mu\}_0^3$, expressed with respect to $\partial_t,\partial_i$ via
\begin{align}\label{ei}
e_I=e_I^i\partial_i,\qquad 
e_0=n^{-1}\partial_t,
\end{align}
which is Fermi propagated along $\partial_t$ according to 
\begin{align}\label{Fermi}
{\bf D}_{e_0}e_I=n^{-1}(e_In)e_0,
\end{align}
where ${\bf D}$ is the Levi-Civita connection of ${\bf g}$.
Note that such a frame is orthonormal and adapted to $\Sigma_t$, ie. $e_I\in T\Sigma_t$, provided it is initially the case. For simplicity, we take $e_1,e_2,e_3$ initially to be the orthonormal basis obtained by applying the Gram-Schmidt process to $\partial_1,\partial_2,\partial_3$ on $\Sigma_0$. The following identity also holds:
\begin{align}\label{De0e0}
{\bf D}_{e_0}e_0=n^{-1}(e_I n)e_I,
\end{align}
where Einstein summation over $I=1,2,3$ is employed here, and similarly for capital indices below, without raising indices to emphasize the fact that $e_I$ is orthonormal. 
The connection coefficients associated to $e_\mu$ are
\begin{align}\label{kIJgammaIJB}
k_{IJ}=-{\bf g}({\bf D}_{e_I}e_0,e_J)=k_{JI},\qquad \gamma_{IJB}={\bf g}({\bf D}_{e_I}e_J,e_B)=g(\nabla_{e_I}e_J,e_B)=-\gamma_{IBJ},
\end{align}
where $k$ is in fact the second fundamental form of $\Sigma_t$ and $\nabla$ is the Levi-Civita connection of $g$. 

We normalize the time function $t$ by requiring that the lapse of its level sets equals
\begin{align}\label{parab.gauge}
n-1=\text{tr}k_{FLRW}-\text{tr}k,
\end{align}
where $\text{tr}k_{FLRW}$ is the mean curvature of $\Sigma_t$ in the FLRW background spacetime, see \eqref{red.var.FLRW}.
The condition \eqref{parab.gauge} leads to a parabolic equation for the lapse $n$, see \eqref{n.eq}, which is well-posed in the future direction.
\begin{remark}\label{rem:parab.gauge}
Although parabolic/elliptic gauges are not easily localized, we find the choice \eqref{parab.gauge} convenient, since, as it turns out for us, the lapse $n$ decays exponentially to $1$, see Theorem \ref{thm:main}. Together with the gauge condition \eqref{parab.gauge}, the latter results in an easy read of the asymptotic behaviors of the above variables at infinity, simply by looking at the left-hand sides of the evolution equations in Lemma \ref{lem:red.eq.2}. 
We should note that the idea of using a parabolic gauge in the present context is not new, see for example \cite{LEUW}, where the authors studied the future asymptotics of solutions to the Einstein-Euler equations with linear equation of state, using an inverse mean-curvature-flow type of equation for the lapse of their time slices. 
\end{remark}
After expressing the Einstein equations \eqref{EE}-\eqref{divT} in the above framework (see Lemmas \ref{lem:red.eq}, \ref{lem:red.eq.2}), we prove the global existence of the perturbed solutions via a bootstrap argument for an $H^N(\Sigma_t)$ energy of the variables $k_{IJ},\gamma_{IJB},e^i_I,n,e_0\psi,e_I\psi$ minus their FLRW values, see \eqref{Boots}. Importantly, our bootstrap assumptions are consistent with the exponential in time decay of all the corresponding differences. Then, we use the reduced system of equations satisfied by these variables to obtain an improvement of our bootstrap assumptions. Together with a standard continuation argument for (locally in time) well-posed systems, the latter implies the global existence of the perturbed solutions and the validity of the bootstrapped estimates for all future time, see Proposition \ref{prop:cont.arg}. Subsequently, we use this result to refine our estimates and derive the precise asymptotic behaviors of the spacetime metric and scalar field at infinity (Section \ref{sec:asym.inf}), stated in \eqref{thm.main.est3}-\eqref{thm.main.est4} below. Moreover, we compute the first terms in the asymptotic expansions of all reduced variables (contained in Theorem \ref{thm:exp}).

\subsection{The main results}\label{subsec:thm}

Now that we have introduced the key variables that we use to control the perturbed solution, we can precisely state our main theorem.
\begin{theorem}\label{thm:main}
Let $(\Sigma_0=\mathbb{T}^3,\mathring{g},\mathring{k},\mathring{\psi},\mathring{\varphi})$ denote the perturbed initial data around a fixed FLRW solution \eqref{FLRW.sol}-\eqref{aFLRW}, satisfying the constraints \eqref{Hamconst.intro}-\eqref{momconst.intro}, and let $(\mathcal{M},{\bf g},\psi)$ be the corresponding maximal solution to \eqref{EE}-\eqref{divT}. Consider the reduced variables introduced in the previous subsection and let $\widehat{k}_{IJ},\widehat{\gamma}_{IJB},\widehat{e}^i_I,\widehat{n},\widehat{e_0\psi},\widehat{e_I\psi},\widehat{\psi}$ denote the corresponding differences, after subtracting their FLRW values, see \eqref{red.var.FLRW}, \eqref{diff.var}. Assume that their initial data, induced by $(\mathring{g},\mathring{k},\mathring{\psi},\mathring{\varphi})$ and our choice of $e_I$ on $\Sigma_0$, are sufficiently small in $H^N(\Sigma_0)$, for some $N\ge4$ (see Section \ref{subsec:norms} for the precise definition of the norms): 
\begin{align}\label{thm.main.init}
\|\widehat{k}\|_{H^N(\Sigma_0)}^2
+\|\widehat{\gamma}\|_{H^N(\Sigma_0)}^2+\|\widehat{e}\|_{H^N(\Sigma_0)}^2+\|\widehat{n}\|_{H^N(\Sigma_0)}^2+\|\widehat{e\psi}\|_{H^N(\Sigma_0)}^2+\|\widehat{\psi}\|_{H^N(\Sigma_0)}^2=:\mathring{\varepsilon}^2.
\end{align}
Then, the perturbed solution is globally defined in the future of $\Sigma_0$, relative to the time function $t$ normalized by \eqref{parab.gauge}, and the following estimate holds true:
\begin{align}\label{thm.main.est}
e^{2Ht}\big\{\|\widehat{k}\|_{H^N(\Sigma_t)}^2
+\|\widehat{\gamma}\|_{H^N(\Sigma_t)}^2+\|\widehat{e}\|_{H^N(\Sigma_t)}^2
+e^{Ht}\|\widehat{n}\|_{H^N(\Sigma_t)}^2+\|\widehat{e\psi}\|_{H^N(\Sigma_t)}^2\big\}\leq &\,C\mathring{\varepsilon}^2,
\end{align}
for all $t\in[0,+\infty)$, where the constant $C>0$ depends on $N$ and the FLRW background solution. Also, the following refined estimates for $\widehat{k},\widehat{n},\widehat{e_0\psi}$ are valid:
\begin{align}\label{thm.main.est2}
\|\widehat{k}\|_{C^{N-4}(\Sigma_t)}+\|\widehat{n}\|_{C^{N-4}(\Sigma_t)}+\|\widehat{e_0\psi}\|_{C^{N-4}(\Sigma_t)}\leq C\mathring{\varepsilon}e^{-2Ht}.
\end{align}
Moreover, the components of the spacetime metric \eqref{metric} and the scalar field satisfy:
\begin{align}\label{thm.main.est3}
\|n-1\|_{C^{N-4}(\Sigma_t)}\leq C\mathring{\varepsilon} e^{-2Ht},&\quad \|g_{ij}-e^{2Ht}g_{ij}^\infty(x)\|_{C^{N-4}(\Sigma_t)}\leq C,\quad
 \|\psi-\psi^\infty(x)\|_{C^{N-4}(\Sigma_t)}\leq Ce^{-2Ht},
\end{align}
for all $t\in[0,+\infty)$, where the limiting functions $g^\infty_{ij}(x),\psi^\infty(x)\in C^{N-4}(\mathbb{T}^3)$ are close to their FLRW values (see Lemma \ref{lem:FLRW}):
\begin{align}\label{thm.main.est4}
\bigg\|g^\infty_{ij}(x)-\mathring{a}^2\bigg(\frac{1}{2}\sqrt{\frac{\mathring{\varphi}_{FLRW}^2}{\Lambda}+1}+\frac{1}{2}\bigg)^\frac{2}{3}\delta_{ij}\bigg\|_{C^{N-4}(\mathbb{T}^3)}\leq C\mathring{\varepsilon},\qquad \|\psi^\infty(x)-\psi_{FLRW}^\infty\|_{C^{N-4}(\mathbb{T}^3)}\leq C\mathring\varepsilon.
\end{align}
\end{theorem}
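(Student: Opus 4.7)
The plan is to run a bootstrap argument on the reduced system satisfied by the perturbed variables $\widehat{k},\widehat{\gamma},\widehat{e},\widehat{n},\widehat{e\psi},\widehat{\psi}$, exploiting the parabolic gauge \eqref{parab.gauge} to obtain ODE-type exponential decay directly from the principal symbols of the evolution equations in Lemma \ref{lem:red.eq.2}. First I would fix some large enough bootstrap interval $[0,T^\ast)$ and postulate bounds of the form
\begin{equation*}
\|\widehat{k}\|_{H^N(\Sigma_t)}^2+\|\widehat{\gamma}\|_{H^N(\Sigma_t)}^2+\|\widehat{e}\|_{H^N(\Sigma_t)}^2+\|\widehat{e\psi}\|_{H^N(\Sigma_t)}^2+e^{Ht}\|\widehat{n}\|_{H^N(\Sigma_t)}^2 \le A\mathring{\varepsilon}^2 e^{-2Ht},
\end{equation*}
with $A$ a constant to be improved. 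The extra $e^{Ht}$ factor for the lapse is forced by the parabolic gauge: since $\widehat{n}=-\widehat{\mathrm{tr}k}$, the lapse inherits the decay of $\widehat{k}$ and actually acquires an additional gain from elliptic regularity applied to the parabolic equation \eqref{n.eq}.

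The main analytical step is the energy estimate. For each reduced variable I would commute with up to $N$ spatial derivatives $e_{I_1}\cdots e_{I_m}$, contract with the natural $H^N$ energy, and integrate by parts on $\Sigma_t$. The linearized ODE part of the system from Lemma \ref{lem:red.eq.2} supplies a coercive damping term of size $\ge 2H$ (modulo the FLRW background rates), while the nonlinearities and spatial commutators are borderline or better under the bootstrap. Schematically one arrives at
\begin{equation*}
\frac{d}{dt}E_N(t)+2H\,E_N(t)\le C\bigl(E_N(t)+E_N(t)^{3/2}\bigr)e^{-Ht},
\end{equation*}
which by Grönwall yields $E_N(t)\le C\mathring{\varepsilon}^2 e^{-2Ht}$ once $\mathring{\varepsilon}$ is sufficiently small; this improves $A$ and, combined with the standard continuation criterion for the system (well-posedness of the parabolic-hyperbolic coupling being the only subtlety, handled in the future direction by \eqref{parab.gauge}), gives global existence together with \eqref{thm.main.est}.

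Once \eqref{thm.main.est} is in hand, I would derive \eqref{thm.main.est2} by revisiting the evolution equations for $\widehat{k}$, $\widehat{n}$ and $\widehat{e_0\psi}$ at lower orders. Sobolev embedding already gives $C^{N-4}$ control with decay $e^{-Ht}$; feeding this back into the right-hand sides of the ODEs — which are quadratic in $\widehat{k},\widehat{\gamma},\widehat{e\psi}$ and linear in $\widehat{n}$ — one reads off an improved source term of size $e^{-2Ht}$, and Duhamel against the damping factor $e^{-2Ht}$ reproduces the sharp rate. For the scalar field one uses $e_0\psi-\partial_t\psi_{FLRW}=\widehat{e_0\psi}+O(\widehat{n})$ and the fact that both contributions decay like $e^{-2Ht}$.

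Finally, to extract the limit functions $g^\infty_{ij}(x),\psi^\infty(x)$, I would rewrite the evolution equation for $g_{ij}$ using $\partial_t g_{ij}=-2n g(e_0,\cdot)$-type identities and the relation between $e_I^i$ and $g_{ij}$, which gives
\begin{equation*}
\partial_t(e^{-2Ht}g_{ij})=e^{-2Ht}\bigl(-2H g_{ij}+\partial_t g_{ij}\bigr)=O(\mathring{\varepsilon})\,e^{-Ht},
\end{equation*}
so integration in $t$ from $0$ to $+\infty$ produces the Cauchy-convergent limit $g^\infty_{ij}(x)$ in $C^{N-4}$, with error $O(\mathring{\varepsilon}e^{-Ht})$ — which is what \eqref{thm.main.est3} states. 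A completely analogous integration of $e_0\psi$ against $n^{-1}$ gives $\psi^\infty$. The closeness estimates \eqref{thm.main.est4} then follow by applying the same integration procedure to the FLRW background via \eqref{aFLRW}, identifying the explicit limit $\mathring{a}^2\bigl(\tfrac12\sqrt{\mathring{\varphi}_{FLRW}^2/\Lambda+1}+\tfrac12\bigr)^{2/3}$ of $e^{-2Ht}a^2(t)$, and estimating the difference using \eqref{thm.main.est}.

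The hard part will be arranging the energy estimate so that the damping really gives $2H$ uniformly across all the coupled variables, rather than some suboptimal $\alpha H$ with $\alpha<2$: one must diagonalize the linearized interaction between $\widehat{k}$ (which damps at rate $3H$ from the FLRW expansion term $\mathrm{tr}k_{FLRW}\widehat{k}$) and $\widehat{e\psi}$, $\widehat{\gamma}$, $\widehat{e}$ (which a priori damp only at rate $H$ from the expansion of the frame) so that the slowest mode still beats the exponentially small nonlinear error. This is exactly where the parabolic gauge pays off, since the elliptic gain for $\widehat{n}$ removes what would otherwise be the worst coupling term.
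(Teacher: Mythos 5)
Your overall strategy matches the paper's: a bootstrap argument for a suitably $e^{2Ht}$-weighted $H^N$ energy, exploitation of the parabolic gain for the lapse to absorb borderline terms, Duhamel/integrating-factor arguments to improve the decay of $\widehat{k},\widehat{n},\widehat{e_0\psi}$ to $e^{-2Ht}$ in $C^{N-4}$, and integration in $t$ to extract the limits $g_{ij}^\infty$ and $\psi^\infty$. The paper implements exactly this via Propositions \ref{prop:cont.arg} and \ref{prop:metric.asym.inf} and Lemma \ref{lem:ref.est}.

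However, your account of the main energy estimate misidentifies where the real difficulty lies. You present the issue as one of ``diagonalizing'' the damping rates so the slowest mode still beats $2H$. That part is in fact benign (the paper just picks relative coefficients $\tfrac12$ and $\tfrac14$ in the $\widehat{k}$--$\widehat{\gamma}$ energy in Lemma \ref{lem:high.en.id}). The genuine obstruction is that the first-order $\widehat{k}$--$\widehat{\gamma}$ subsystem is \emph{not} symmetric hyperbolic: the term $-n\,e_I\widehat{\gamma}_{CJC}$ in \eqref{k.hat.eq} (equivalently $e_I\gamma_{CJC}$ in \eqref{k.eq}) has no antisymmetric counterpart in \eqref{gamma.hat.eq}. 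After integrating that term by parts, you are left with the divergence $e_I\partial^{\iota}\widehat{k}_{IJ}$, which is not controlled by the $H^N$ energy alone, and the same issue appears for the principal lapse term $-e_Ie_J\widehat{n}$. The paper resolves this by substituting the differentiated momentum constraint \eqref{high.mom.hat.const} for $e_I\partial^{\iota}\widehat{k}_{IJ}$; only after that substitution do the remaining boundary terms pair against the parabolic Dirichlet energy $\sum_C e^{3Ht}\|e_C\widehat{n}\|_{H^N}^2$ and close. Your sketch, as written, would stall at precisely this point. (The appendix of the paper is devoted to the fact that the raw system \eqref{k.eq}--\eqref{gamma.eq} is not well-posed without such a modification.) If you add the constraint substitution to your energy identity, the rest of your plan goes through and matches the paper.
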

\begin{proof}
It is the combination of Proposition \ref{prop:cont.arg}, Lemma \ref{lem:ref.est}, and Proposition \ref{prop:metric.asym.inf}.
\end{proof}
Although the previous theorem establishes the global existence of perturbed solutions, in the future direction, with adequate information on their asymptotic behavior, the stated estimates do not capture the whole set of asymptotic data at infinity. For example, the asymptotic data of $\psi$ at infinity consists of two functions, while in Theorem \ref{thm:main} only one of the two is identified, namely, $\psi^\infty(x)$. Nevertheless, Theorem \ref{thm:main} can be used, together with the reduced system of equations in Lemma \ref{lem:red.eq.2}, to obtain the full asymptotic expansions of all variables. In the present paper, we only compute the first couple of terms in the expansions that suffice to identify the asymptotic data of all variables. These are in accordance with the expansions of the analytic solutions constructed in \cite{Ren}, where the degrees of freedom and asymptotic constraints at infinity are also studied.

The proof of the next theorem is found in the end of Section \ref{subsec:red.exp}.
\begin{theorem}\label{thm:exp}
Consider a perturbed solution furnished by Theorem \ref{thm:main} with $N\ge6$. The variables $\widehat{k}_{IJ},\widehat{\gamma}_{IJB}$, $\widehat{e}^i_I,\widehat{n},\widehat{e_0\psi},\widehat{e_i\psi}$ admit the following finite expansions:
\begin{align}
\label{n.hat.exp}
\|\widehat{n}-\widehat{n}^\infty(x)e^{-2Ht}\|_{C^{N-6}(\Sigma_t)}\leq&\,C\mathring{\varepsilon}e^{-4Ht},\\
\label{eIi.hat.exp}\|\widehat{e}_I^i-(\widehat{e}_I^i)^\infty(x)e^{-Ht}\|_{C^{N-4}(\Sigma_t)}\leq&\,C\mathring{\varepsilon}e^{-3Ht},\\
\label{gamma.hat.exp}\|\widehat{\gamma}_{IJB}-\widehat{\gamma}_{IJB}^\infty(x)e^{-Ht}\|_{C^{N-5}(\Sigma_t)}\leq&\,C\mathring{\varepsilon}e^{-3Ht},\\
\label{k.hat.exp}\|\widehat{k}_{IJ}-F_{\widehat{k}}(x)e^{-2Ht}-\widehat{k}_{IJ}^\infty(x) e^{-3Ht}\|_{C^{N-6}(\Sigma_t)}\leq&\,C\mathring{\varepsilon}e^{-4Ht},\\
\label{e0psi.hat.exp}\|\widehat{e_0\psi}- F_{\widehat{e_0\psi}}(x)e^{-2Ht}-(\widehat{e_0\psi})^\infty(x)e^{-3Ht}\|_{C^{N-6}(\Sigma_t)}\leq&\,C\mathring{\varepsilon}e^{-4Ht},\\
\label{eIpsi.hat.exp}\|\widehat{e_I\psi}-(\widehat{e_I\psi})^\infty(x)e^{-Ht}\|_{C^{N-5}(\Sigma_t)}\leq&\,C\mathring{\varepsilon}e^{-3Ht},
\end{align}
where 
\begin{align}
\label{infty.forcing.k.hat}
F_{\widehat{k}}(x)=&\,\frac{1}{H}\big\{(e_C^a)^\infty\partial_a\widehat{\gamma}_{IJC}^\infty-(e_I^a)^\infty\partial_a\widehat{\gamma}_{CJC}^\infty
-\frac{2}{3}\delta_{IJ}(e_C^a)^\infty\partial_a\widehat{\gamma}_{DDC}^\infty-\widehat{\gamma}_{CID}^\infty\widehat{\gamma}^\infty_{DJC}-\widehat{\gamma}_{IJD}^\infty\widehat{\gamma}_{CCD}^\infty\\
\notag&+\frac{1}{3}\delta_{IJ}(\widehat{\gamma}_{CED}^\infty\widehat{\gamma}^\infty_{DEC}+\widehat{\gamma}_{EED}^\infty\widehat{\gamma}_{CCD}^\infty)-(\widehat{e_I\psi})^\infty(\widehat{e_J\psi})^\infty+\frac{1}{3}\delta_{IJ}(\widehat{e_C\psi})^\infty(\widehat{e_C\psi})^\infty\big\}(x),\\
\label{infty.forcing.e0psi.hat} F_{\widehat{e_0\psi}}(x)=&\,\frac{1}{H}\big\{(e_C^a)^\infty\partial_a(\widehat{e_C\psi})^\infty-\widehat{\gamma}^\infty_{CCD}(\widehat{e_D\psi})^\infty\big\}(x),\\
\label{eIi.infty}(e_I^i)^\infty(x):=&\,(\widehat{e}_I^i)^\infty(x)+\delta^i_I\mathring{a}^{-1}\bigg(\frac{1}{2}\sqrt{\frac{\mathring{\varphi}_{FLRW}^2}{\Lambda}+1}+\frac{1}{2}\bigg)^{-\frac{1}{3}},
\end{align}
for all $t\in[0,+\infty)$, where
$\widehat{n}^\infty(x),\widehat{k}_{IJ}^\infty(x),(\widehat{e_0\psi})^\infty(x)\in C^{N-6}(\mathbb{T}^3)$,
$\widehat{\gamma}_{IJB}^\infty(x),(\widehat{e_I\psi})^\infty(x)\in C^{N-5}(\mathbb{T}^3)$, $(\widehat{e}_I^i)^\infty(x)\in C^{N-4}(\mathbb{T}^3)$, are the renormalised limits at $+\infty$ of the corresponding variables or algebraic expressions above, satisfying
\begin{align}\label{hat.lim.est}
\begin{split}
\|\widehat{n}^\infty(x)\|_{C^{N-6}(\mathbb{T}^3)},\|(\widehat{e}_I^i)^\infty(x)\|_{C^{N-4}(\mathbb{T}^3)},\|\widehat{\gamma}_{IJB}^\infty(x)\|_{C^{N-5}(\mathbb{T}^3)},\|\widehat{k}_{IJ}^\infty(x)\|_{C^{N-6}(\mathbb{T}^3)},\\
\|(\widehat{e_0\psi})^\infty(x)\|_{C^{N-6}(\mathbb{T}^3)},\|(\widehat{e_I\psi})^\infty(x)\|_{C^{N-5}(\mathbb{T}^3)}\leq C\mathring{\varepsilon}.
\end{split}
\end{align}
\end{theorem}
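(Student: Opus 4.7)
The plan is to work order by order, using the reduced evolution equations in Lemma \ref{lem:red.eq.2} together with the exponential decay rates already established in Theorem \ref{thm:main}. Each hatted variable $\widehat{X}$ satisfies a transport-type equation of the schematic form $e_0 \widehat{X} + \alpha H \widehat{X} = S[\widehat{Y}]$, where $\alpha$ is a positive constant (essentially $1$ or $2$ here) reflecting the dissipative contribution of the cosmological background, and $S$ is a spatial/quadratic source built from other reduced variables. Multiplying by $e^{\alpha Ht}$ turns the left-hand side into a total $t$-derivative of $e^{\alpha Ht}\widehat{X}$, and Theorem \ref{thm:main} ensures that $e^{\alpha Ht} S$ is integrable in $t$ in the relevant $C^{N-k}$ norm, so that the limit $\lim_{t\to\infty} e^{\alpha Ht}\widehat{X}(t,\cdot)$ exists. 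Standard ODE-type estimates then convert this integrability into the $e^{-Ht}$-polynomial error bounds stated in \eqref{n.hat.exp}--\eqref{eIpsi.hat.exp}.

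Concretely, I would first handle the variables that carry only a single leading term. For $\widehat{e}_I^i$ and $\widehat{\gamma}_{IJB}$, whose linear parts damp at rate $H$, the integrating-factor identity yields $e^{Ht}\widehat{e}_I^i(t,x) \to (\widehat{e}_I^i)^\infty(x)$ and $e^{Ht}\widehat{\gamma}_{IJB}(t,x)\to \widehat{\gamma}_{IJB}^\infty(x)$ with remainder $O(e^{-2Ht})$, since the right-hand sides are quadratic in variables decaying at rate $e^{-Ht}$ (or better, in the case of $\widehat{n}$-contributions). The same treatment applies to $\widehat{e_I\psi}$, using the wave equation \eqref{divT} rewritten in frame form. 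The parabolic lapse equation coming from the gauge \eqref{parab.gauge}, combined with the refined bound \eqref{thm.main.est2}, yields \eqref{n.hat.exp}: one multiplies by $e^{2Ht}$ and integrates, and the nonlinear right-hand side is of the right order to produce a limit $\widehat{n}^\infty(x)$.

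The core content lies in the two-term expansions \eqref{k.hat.exp} and \eqref{e0psi.hat.exp}. For these I would use a bootstrap of expansions: having established the single-term expansions of $\widehat{\gamma}, \widehat{e}, \widehat{e_I\psi}$, one substitutes them into the source terms of the evolution equations for $\widehat{k}$ and $\widehat{e_0\psi}$. The leading contribution to these sources comes from pairing $e^{-Ht}\cdot e^{-Ht}$ terms --- namely spatial derivatives of $\widehat{\gamma}^\infty, (\widehat{e_I\psi})^\infty$ along $(e_C^a)^\infty$, together with quadratic products of $\widehat{\gamma}^\infty, (\widehat{e_I\psi})^\infty$ --- producing a source of the form $e^{-2Ht}\,G(x)+O(e^{-3Ht})$; dividing by the damping coefficient $H$ and solving the resulting forced ODE gives precisely the forcing profiles $F_{\widehat{k}}$ in \eqref{infty.forcing.k.hat} and $F_{\widehat{e_0\psi}}$ in \eqref{infty.forcing.e0psi.hat}. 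Subtracting this explicit $e^{-2Ht}$ piece and applying the integrating-factor method once more to the residual --- which now decays at rate $e^{-3Ht}$ --- furnishes the next profiles $\widehat{k}_{IJ}^\infty(x)$ and $(\widehat{e_0\psi})^\infty(x)$. The limit estimate \eqref{hat.lim.est} follows immediately from taking $t\to\infty$ in the already-established uniform bounds on $e^{\alpha Ht}\widehat{X}$.

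The main obstacle is bookkeeping. One has to identify carefully which of the many quadratic and spatial-derivative terms in Lemma \ref{lem:red.eq.2} actually contribute at the resonant $e^{-2Ht}$ order for $\widehat{k}$ and $\widehat{e_0\psi}$ (and hence enter the explicit formulas \eqref{infty.forcing.k.hat}--\eqref{infty.forcing.e0psi.hat}), versus which are strictly subleading and get absorbed into the $O(e^{-4Ht})$ remainder; the trace-adjusting $\frac{1}{3}\delta_{IJ}(\ldots)$ pieces in \eqref{infty.forcing.k.hat} encode in particular the constraints together with the gauge \eqref{parab.gauge}. A minor but genuine technical point is the derivative loss in passing from $C^{N-4}$ estimates on $\widehat{e}, \widehat{\gamma}, \widehat{e_I\psi}$ to $C^{N-5}$ and then $C^{N-6}$ bounds on their limits and on the next-order remainders, which accounts for the regularity hierarchy displayed in the theorem statement and forces the assumption $N\ge 6$.
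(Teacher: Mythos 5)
Your plan is essentially the paper's argument: view each reduced equation from Lemma \ref{lem:red.eq.2} as a damped transport equation, multiply by an integrating factor, show integrability of the weighted source; first do this for the single-term variables $\widehat{n},\widehat{e}^i_I,\widehat{\gamma}_{IJB},\widehat{e_I\psi}$, then substitute their expansions into the equations for $\widehat{k}_{IJ},\widehat{e_0\psi}$ to read off the $e^{-2Ht}$ profiles $F_{\widehat{k}},F_{\widehat{e_0\psi}}$, subtract, and iterate. The regularity-loss bookkeeping you describe also matches the paper's.

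There is, however, one quantitative step that as stated would not close. You claim that after inserting the single-term expansions, the source for $\widehat{k}_{IJ}$ (and similarly $\widehat{e_0\psi}$) has the form $G(x)e^{-2Ht}+O(e^{-3Ht})$. With only that error rate, the next integrating-factor step for the residual $\widehat{k}_{IJ}-F_{\widehat{k}}e^{-2Ht}$ gives $\partial_t\{e^{3Ht}[\widehat{k}_{IJ}-F_{\widehat{k}}e^{-2Ht}]\}=O(1)$, which is not integrable, so no limit $\widehat{k}^\infty_{IJ}$ could be extracted; worse, any honest $e^{-3Ht}$ component of the source would be resonant with the homogeneous decay $e^{-3Ht}$ and would produce a $t\,e^{-3Ht}$ logarithmic term, contradicting \eqref{k.hat.exp}. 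What actually holds — and what you need — is that the source error is $O(e^{-4Ht})$, giving $\partial_t\{e^{3Ht}[\widehat{k}_{IJ}-F_{\widehat{k}}e^{-2Ht}]\}=O(e^{-Ht})$ and hence a convergent tail with remainder $O(e^{-4Ht})$. This improved rate is structural and you should verify it explicitly: each single-term expansion of $\widehat{\gamma},\widehat{e}^i_I,\widehat{e_I\psi}$ has a two-order gap (leading $e^{-Ht}$, remainder $e^{-3Ht}$), so any pairwise product inherits a two-order gap, $e^{-2Ht}$ leading and $e^{-4Ht}$ error; and the remaining source terms in \eqref{k.hat.eq}, \eqref{e0psi.hat.eq} involve $\widehat{n}$ or $\widehat{k}$, each $O(e^{-2Ht})$ by the refined estimate \eqref{thm.main.est2}, always paired with another $O(e^{-2Ht})$ factor, so again $O(e^{-4Ht})$. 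A small wording remark: the relevant denominator in $F_{\widehat{k}},F_{\widehat{e_0\psi}}$ is not the damping coefficient $3H$ itself but $3H-2H=H$, the gap between the damping rate and the decay rate of the $e^{-2Ht}$ forcing — which is exactly the $1/H$ prefactor in \eqref{infty.forcing.k.hat}--\eqref{infty.forcing.e0psi.hat}.
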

\begin{remark}\label{rem:asym.data}
The limiting functions $\widehat{n}^\infty(x),(\widehat{e}_I^i)^\infty(x),\widehat{\gamma}_{IJB}^\infty(x),\widehat{k}_{IJ}^\infty(x),(\widehat{e_0\psi})^\infty(x),(\widehat{e_I\psi})^\infty(x)$ are the `free data' of each corresponding reduced variable at infinity. However, they are not all independent. For example, the functions $\widehat{\gamma}_{IJB}^\infty(x)$ can be computed from $(\widehat{e}_I^i)^\infty(x)$ and their first derivatives. After taking into account such dependencies, one obtains the asymptotic data of the perturbed solution, expressed in the above gauge.
\end{remark}
\begin{remark}\label{rem:k.free}
The asymptotic data of $\widehat{k}_{IJ},\widehat{e_0\psi}$ at infinity are captured by the coefficients of the second order terms in their corresponding expansions, due to the fact that the homogeneous branches of solutions to the equations \eqref{k.hat.eq}, \eqref{e0psi.hat.eq} are of order $e^{-3Ht}$, while the contributions from the corresponding RHSs decay slower, giving the leading order terms $F_{\widehat{k}}(x)e^{-2Ht},F_{\widehat{e_0\psi}}(x)e^{-2Ht}$.
\end{remark}
As we discussed in the beginning of the introduction, we are also interested in the interpretation of the massless-scalar field model as an irrotational fluid with linear equation of state $p=\rho$. More generally, a solution to the Einstein-Euler system with linear equation of state $p=c_s^2\rho$, $c_s\in[0,1]$, in the irrotational case (see for example \cite{RS1}), possesses a fluid potential $\psi$ such that 
\begin{align}\label{rho}
\rho=\frac{1}{1+c_s^2}[-{\bf g}(\mathrm{grad}\psi,\mathrm{grad}\psi)]^{\frac{1+c_s^2}{2c_s^2}},\qquad u=-\frac{\mathrm{grad}\psi}{\sqrt{-{\bf g}(\mathrm{grad}\psi,\mathrm{grad}\psi)}},
\end{align}
where $\mathrm{grad}\psi$ is the gradient of $\psi$ and $u$ is the unit fluid speed, ${\bf g}(u,u)=-1$. In our framework ($c_s=1$),
\begin{align}\label{rho2}
\mathrm{grad}\psi=-(e_0\psi) e_0+(e_I\psi) e_I, \qquad2\rho=(e_0\psi)^2-(e_I\psi)e_I\psi.
\end{align}
%
This is the limiting situation where the sound speed $c_s$ is equal to the speed of light. 

For the solutions constructed in \cite{Ren}, in the parameter range $\frac{1}{3}<c_s<1$,
the author required that the spatial part of $u$, ie. its projection $\overline{u}:={\bf g}(u,e_I)e_I$ onto $\Sigma_t$, is nowhere vanishing, to avoid any inhomogeneous features in $\rho$ that could lead to a blowup in the density contrast. For the present model of study, the non-vanishing of $\overline{u}$ near infinity is equivalent to the non-vanishing of the spatial gradient of $\psi$, $(e_I\psi) e_I$. Since $\psi(t,\cdot)$ is a scalar function from $\mathbb{T}^3\to\mathbb{R}$, it necessarily has critical points, for any $t$, where its spatial gradient vanishes. Nevertheless, for general asymptotic data, there will be spacetime regions near infinity where $\psi^\infty(x)$ has no critical points. In the next corollary, we show that even in such regions, the interpretation of our perturbed solutions as irrotational fluids breaks down, due to the spacetime gradient of $\psi$ being spacelike. For the Einstein-Euler system, the corresponding irrotational solutions would degenerate at the first point where the fluid speed $u$, ie. the normalized spacetime gradient of $\psi$, becomes null. Evidently, such a point exists, since grad$\psi$ starts out timelike (on $\Sigma_0$) and it is in general spacelike near infinity (according to Corollary \ref{cor:fluid}). It is tempting to speculate that a similar breakdown would also occur in near-FLRW solutions, at least in the irrotational case, for $c_s<1$ but close to $1$. However, we refrain from making any claims due to the special nature of the limiting case $c_s=1$. 
\begin{corollary}\label{cor:fluid}
Let $({\bf g},\psi)$ be a solution to \eqref{EE}-\eqref{divT} as in Theorem \ref{thm:main}, for $N\ge5$. Then the norm of the gradient of the scalar field satisfies the estimate:
\begin{align}\label{grad.psi.exp}
\big\|-(e_0\psi)^2+(e_I\psi)e_I\psi-(e_I^a)^\infty(x)(e^b_I)^\infty(x)\partial_a\psi^\infty(x)\partial_b\psi^\infty(x)e^{-2Ht}\big\|_{C^{N-5}(\Sigma_t)}\leq Ce^{-4Ht},
\end{align}
for all $t\in[0,+\infty)$, where $(e^a_I)^\infty(x),\psi^\infty(x)$ are as in \eqref{thm.main.est3}, \eqref{eIi.infty}. In particular, $\mathrm{grad}\psi(t,x)$ becomes spacelike for $t$ sufficiently large, how large depending on the specific $x\in\mathbb{T}^3$, provided $x$ is not a critical point of the limit function $\psi^\infty:\mathbb{T}^3\to\mathbb{R}$.
\end{corollary}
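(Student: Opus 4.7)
The starting point is the orthonormal-frame identity
$${\bf g}(\mathrm{grad}\psi,\mathrm{grad}\psi) = -(e_0\psi)^2 + \sum_{I=1}^3(e_I\psi)^2,$$
already noted in \eqref{rho2}. My plan is to compute the asymptotic behavior of each term using Theorems \ref{thm:main} and \ref{thm:exp}, show that the spatial term dominates with the leading coefficient claimed in \eqref{grad.psi.exp}, and then exploit positive-definiteness of the rescaled inverse spatial metric to conclude that $\mathrm{grad}\psi$ becomes spacelike outside critical points of $\psi^\infty$.

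For the temporal term, I would decompose $e_0\psi = (e_0\psi)_{FLRW} + \widehat{e_0\psi}$. From \eqref{FLRW.sol}--\eqref{aFLRW} one has $(e_0\psi)_{FLRW} = \mathring{\varphi}_{FLRW}\mathring{a}^3/a^3(t) = O(e^{-3Ht})$, while \eqref{thm.main.est2} gives $\|\widehat{e_0\psi}\|_{C^{N-4}(\Sigma_t)} \le C\mathring{\varepsilon}e^{-2Ht}$. Squaring therefore yields $(e_0\psi)^2 = O(e^{-4Ht})$ in $C^{N-4}(\Sigma_t)$, which is absorbed into the remainder in \eqref{grad.psi.exp}.

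For the spatial contribution, I expand $e_I\psi = e_I^i \partial_i\psi$. Combining \eqref{eIi.hat.exp}--\eqref{eIi.infty} with the explicit asymptotics of $a(t)^{-1}$ from \eqref{aFLRW} (the FLRW contribution $\delta^i_I a^{-1}(t)$ has already been absorbed into the constant piece of $(e_I^i)^\infty$), one obtains
$$e_I^i = (e_I^i)^\infty(x)\,e^{-Ht} + O(e^{-3Ht})\quad\text{in }C^{N-4}(\Sigma_t).$$
Differentiating the bound in \eqref{thm.main.est3} gives $\partial_i\psi = \partial_i\psi^\infty + O(e^{-2Ht})$ in $C^{N-5}(\Sigma_t)$. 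Multiplying and using the product rule (the cross terms $(e_I^i)^\infty e^{-Ht}\cdot O(e^{-2Ht})$ and $O(e^{-3Ht})\cdot \partial_i\psi^\infty$ are both $O(e^{-3Ht})$) produces
$$e_I\psi = (e_I^i)^\infty(x)\,\partial_i\psi^\infty(x)\,e^{-Ht} + O(e^{-3Ht})\quad\text{in }C^{N-5}(\Sigma_t),$$
and summing the squares over $I$ gives precisely the claimed leading term $(e_I^a)^\infty(e_I^b)^\infty \partial_a\psi^\infty \partial_b\psi^\infty e^{-2Ht}$, with an $O(e^{-4Ht})$ remainder.

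For the qualitative conclusion at a non-critical point $x$ of $\psi^\infty$, I would observe that $\sum_I(e_I^a)^\infty(e_I^b)^\infty$ is the leading-order part of the rescaled inverse spatial metric, which by \eqref{thm.main.est4} and smallness of $\mathring{\varepsilon}$ is a small perturbation of a positive scalar multiple of $\delta^{ab}$; equivalently, the three vectors $\{(e_I^\cdot)^\infty(x)\}_{I=1}^3$ span $\mathbb{R}^3$, so the symmetric form $\sum_I(e_I^a)^\infty(e_I^b)^\infty \xi_a\xi_b$ is strictly positive whenever $\xi\neq 0$. Applied to $\xi_a = \partial_a\psi^\infty(x)\neq 0$, this yields a strictly positive coefficient in front of $e^{-2Ht}$ in \eqref{grad.psi.exp}, so for $t$ sufficiently large (depending on $x$) one has ${\bf g}(\mathrm{grad}\psi,\mathrm{grad}\psi)(t,x) > 0$, i.e.\ $\mathrm{grad}\psi$ is spacelike there. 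The only technical care lies in propagating the error terms cleanly through products in the $C^{N-5}$ norm, which is routine given the expansions already furnished by Theorem \ref{thm:exp}; I do not anticipate any serious obstacle.
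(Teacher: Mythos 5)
Your proof is correct and follows essentially the same route as the paper's: decompose $e_0\psi$ and $e_I\psi$ using the FLRW values and the hatted differences, invoke the refined bound \eqref{thm.main.est2} for $\widehat{e_0\psi}$ and the expansions \eqref{eIi.hat.exp}, \eqref{eIi.infty}, \eqref{thm.main.est3} for $e_I^a$ and $\partial_a\psi$, multiply and estimate cross terms, and conclude spacelikeness at non-critical points of $\psi^\infty$ from positivity of $\sum_I(e_I^a)^\infty(e_I^b)^\infty\partial_a\psi^\infty\partial_b\psi^\infty$. The paper organises the spatial-term estimate as a two-step triangle inequality (first replacing $\partial\psi$ by $\partial\psi^\infty$, then $e_I^a$ by its leading asymptotic), whereas you expand $e_I\psi$ directly, but this is a cosmetic difference and the error budget is identical.
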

\begin{proof}
The estimate \eqref{thm.main.est2}, together with the formulas \eqref{red.var.FLRW}, \eqref{diff.var} for $(e_0\psi)_{FLRW},(\widehat{e_0\psi})$, imply that $\|e_0\psi\|_{C^{N-4}(\mathbb{T}^3)}^2\leq C e^{-4Ht}$. On the other hand, the spatial gradient of $\psi$ equals
$(e_I\psi)e_I\psi=e_I^ae_I^b\partial_a\psi\partial_b\psi$. The estimate \eqref{thm.main.est} and the formulas \eqref{red.var.FLRW}, \eqref{diff.var} for $({e}_I^a)_{FLRW},\widehat{e}^a_I$, imply that $|e_I^a|\leq Ce^{-Ht}$. Then, the estimate \eqref{thm.main.est3} gives
\begin{align*}
|e_I^ae_I^b\partial_a\psi\partial_b\psi-e_I^ae_I^b\partial_a\psi^\infty(x)\partial_b\psi^\infty(x)|\leq Ce^{-4Ht}.
\end{align*}
Moreover, using the expansion \eqref{eIi.hat.exp} for $\widehat{e}_I^a$, the formulas \eqref{eIi.infty} for $(e^a_I)^\infty$ and \eqref{red.var.FLRW} for $(e_I^a)_{FLRW}$, we have 
\begin{align*}
|e_I^ae_I^b\partial_a\psi^\infty(x)\partial_b\psi^\infty(x)-(e_I^a)^\infty(x)(e_I^b)^\infty(x)\partial_a\psi^\infty(x)\partial_b\psi^\infty(x)|\leq Ce^{-4Ht}.
\end{align*}
The asserted estimate \eqref{grad.psi.exp} follows from the above bounds, triangular inequality, and the regularity of the involved variables $\psi^\infty(x),(e_I^a)^\infty(x)\in C^{N-4}(\mathbb{T}^3)$. 
\end{proof}
Interestingly, had grad$\psi$ remained forever timelike, it would make sense to use $\psi$ as a time function. This option was explored by Alho--Mena--Valiente-Kroon \cite{AMK} (see also other references therein), in the context of the Einstein-Friedrich-nonlinear scalar field system. Unfortunately, for the solutions at hand, according to Corollary \ref{cor:fluid}, such a gauge would in general degenerate near infinity.

\subsection{Outline of the paper}

In Section \ref{sec:setup}, we derive the equations satisfied by the variables introduced in Section \ref{subsec:approach}, as well as the resulting equations after subtracting the background FLRW variables \eqref{red.var.FLRW}, see Lemmas \ref{lem:red.eq}, \ref{lem:red.eq.2}. Next, we introduce the norms and bootstrap assumptions in Section \ref{sec:norms.boots}. Then, in Section \ref{sec:main.est}, we derive the energy estimates that are needed to close the bootstrap argument and obtain a global solution. The standard continuation argument, described in the proof of Proposition \ref{prop:cont.arg}, relies on the well-posedness properties of the reduced system of equations used in the present paper. Although the actual system is not well-posed per se (a well-known problem of ADM-type systems), the required properties can be derived by studying a modified system, as it has been done in various past works, see Appendix \ref{sec:app}. Finally, in Section \ref{sec:asym.inf}, we use the obtained global solution and the bootstrapped estimates to derive refined estimates and the precise asymptotic behavior of the perturbed solutions at infinity, thus, completing the proofs of Theorems \ref{thm:main}, \ref{thm:exp}.

\subsection{Notation}

We adopt the following notational conventions without any further mentioning below.
\begin{itemize}

\item $H=\sqrt{\frac{\Lambda}{3}}$.

\item We will use $C>0$ to denote a generic constant that depends on the number $N$ of derivatives in our norms and the FLRW background we are perturbing about. Also, it will be allowed to change from one line to the next. 

\item We will use the symbol $\mathcal{O}(e^{Bt})$, $B\in\mathbb{R}$, to denote smooth homogeneous functions $\mathbb{R}\to\mathbb{R}$, depending solely on the background FLRW solution \eqref{FLRW.sol}, that satisfy $|\partial_t^N\mathcal{O}(e^{Bt})|\leq C_{N,B}e^{Bt}$, for any $N\in\mathbb{N}$, where the constant $C_{N,B}$ depends on $N,B$. 

\item Latin indices $a,b,i,j,A,B,I,J$  
range over $\{1,2,3\}$. Small letters correspond to coordinate vector fields, while capital letters are reserved for the spatial orthonormal frame. In few instances we use Greek letters $\alpha,\beta,\mu,\nu$, mainly to define our curvature conventions, ranging over $\{0,1,2,3\}$.

\item We use Einstein summation for repeated indices. Whenever a sum is computed relative to the spatial orthonormal frame, we do not raise indices, e.g. $(e_In)e_I=\sum_{I=1}^3(e_In)e_I$.

\item The Riemann curvature ${\bf Riem}$, Ricci curvature ${\bf Ric}$, and scalar curvature ${\bf R}$ of ${\bf g}$ are defined as follows:
\begin{align} \label{curv.bf.g}
\notag	{\bf Riem}(e_{\alpha},e_{\beta},e_{\mu},e_{\nu}):=&\,{\bf g}
	({\bf D}^2_{e_{\alpha} e_{\beta}} e_\nu 
		- 
		{\bf D}^2_{e_{\beta} e_{\alpha}} e_\nu,
		e_\mu ), \\
	{\bf Ric}(e_{\alpha},e_{\beta})
	 :=&-{\bf Riem}(e_{\alpha},e_0,e_{\beta},e_0)+{\bf Riem}(e_{\alpha},e_I,e_{\beta},e_I),\\
\notag	{\bf R }
	 :=&-{\bf Ric}(e_0,e_0)+{\bf Ric}(e_I,e_I),
\end{align}
where ${\bf D}^2_{e_{\alpha} e_{\beta}}e_\nu:={\bf D}_{e_\alpha}({\bf D}_{e_\beta}e_\nu)-{\bf D}_{{\bf D}_{e_\alpha}e_{\beta}}e_\nu$.
The corresponding curvature tensors of $g$, denoted by $Riem$, $Ric$, $R$, are defined analogously.

\end{itemize}

\subsection{Acknowledgements}

I would like to thank Todd Oliynyk, Hans Ringstr\"om, Volker Schlue and Jared Speck for useful communications. Also, I would like to thank Artur Alho, Jos\'e Nat\'ario and Jo\~ao Costa for bringing additional references to my attention. 
The author gratefully acknowledges the support of the \texttt{ERC grant 714408 GEOWAKI}, under the European Union's Horizon 2020 research and innovation program.

\section{Setting up the perturbed problem}\label{sec:setup}

In this section we formulate the Einstein equations in the setup introduced in Section \ref{subsec:approach}. We then compute the background FLRW variables and the resulting equations for the variables that measure the closeness of the perturbed solution to the background.

\subsection{The reduced Einstein equations}

Recall our gauge choices and variables in \eqref{metric}-\eqref{parab.gauge}. The Einstein-massless-scalar field system \eqref{EE}-\eqref{divT} reduces to:
\begin{lemma}\label{lem:red.eq}
The variables $k_{IJ},\gamma_{IJB},e^i_I,n$ satisfy the evolution equations: 
\begin{align}
\label{k.eq}e_0k_{IJ}+(n-1-\mathrm{tr}k_{FLRW})k_{IJ}=&-n^{-1} e_I e_J n+e_C \gamma_{IJC}-e_I\gamma_{CJC}\\
\notag&+n^{-1}\gamma_{IJC} e_C n-\gamma_{CID}\gamma_{DJC}-\gamma_{IJD}\gamma_{CCD}-\Lambda\delta_{IJ}-e_I\psi e_J\psi,\\
\label{gamma.eq}e_0\gamma_{IJB}-k_{IC} \gamma_{CJB}=&\,e_B k_{IJ}-e_J k_{BI}-k_{IC}\gamma_{BJC}-k_{CJ} \gamma_{BIC}+k_{IC} \gamma_{JBC}
+k_{BC} \gamma_{JIC}\\
&+n^{-1}(e_B n)k_{JI}-n^{-1}(e_J n)k_{BI},\notag\\
\label{eIi.eq}e_0e_I^i=&\,k_{IC} e_C^i,\\
\label{n.eq}\partial_tn-e_C e_C n=&-\gamma_{CCD}e_Dn-nk_{CD}k_{CD}+n\Lambda-n(e_0\psi)^2+\partial_t\mathrm{tr}k_{FLRW}
\end{align}
The derivatives of the scalar field $e_0\psi,e_I\psi$ satisfy the evolution equations:
\begin{align}
\label{e0psi.eq}e_0(e_0\psi)+(n-1-\mathrm{tr}k_{FLRW})e_0\psi=&\,e_C(e_C\psi)-\gamma_{CCD}e_D\psi+n^{-1}(e_Cn)e_C\psi,\\
\label{eIpsi.eq}e_0(e_I\psi)-k_{IC}e_C\psi=&\,e_I(e_0\psi)+n^{-1}(e_In)e_0\psi.
\end{align}
Also, the following constraint equations hold:
\begin{align}
\label{Hamconst}2e_C \gamma_{DDC}
-
\gamma_{CDE}\gamma_{EDC}
-
\gamma_{CCD}\gamma_{EED}
=&\,k_{CD}k_{CD}-(n-1-\mathrm{tr}k_{FLRW})^2+2\Lambda+(e_0\psi)^2+e_C\psi e_C\psi,\\
e_C k_{CI}+e_In-  
k_{ID} \gamma_{CCD}
-
k_{CD} \gamma_{CID}=&-e_0\psi e_I\psi.
\label{momconst}
 \end{align}
\end{lemma}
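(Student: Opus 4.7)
The strategy is a direct computation in the Fermi-propagated orthonormal frame: I would differentiate the definitions \eqref{kIJgammaIJB} of $k_{IJ}$ and $\gamma_{IJB}$ along $e_0$ and insert the Einstein and wave equations into this frame, so that all curvature and second-order operators translate into the connection coefficients $k,\gamma$ and the lapse $n$. The essential preliminary is the commutator identity
$$
[e_0, e_I] = n^{-1}(e_I n)\, e_0 + k_{IJ}\, e_J,
$$
obtained by decomposing $[e_0,e_I]$ against the orthonormal frame using the Fermi propagation \eqref{Fermi}, the orthogonality of $e_I$ to $e_0$, and the definition of $k_{IJ}$. Matching its spatial part against the coordinate expression of $[n^{-1}\partial_t, e_I^i\partial_i]$ yields \eqref{eIi.eq} at once, while applying the commutator to $\psi$ gives \eqref{eIpsi.eq} directly from $[e_0,e_I]\psi = e_0(e_I\psi) - e_I(e_0\psi)$.

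For \eqref{k.eq}, I would apply $e_0$ to $k_{IJ} = -{\bf g}({\bf D}_{e_I}e_0, e_J)$; the term involving ${\bf D}_{e_0}e_J$ vanishes by Fermi propagation and unit length of $e_0$, leaving $e_0 k_{IJ} = -{\bf g}({\bf D}_{e_0}{\bf D}_{e_I}e_0, e_J)$. Commuting the covariant derivatives via the Ricci identity and substituting \eqref{De0e0} together with the commutator above produces a ${\bf Riem}(e_I, e_0, e_J, e_0)$ contribution, the Hessian-like term $n^{-1}e_I e_J n$, and quadratic $k$-pieces. I would then eliminate the Riemann tensor through the frame decomposition of ${\bf Ric}$ in \eqref{curv.bf.g}, substitute the Einstein equation ${\bf Ric}(e_I, e_J) = \Lambda\delta_{IJ} + (e_I\psi)(e_J\psi)$, and rewrite the remaining spatial Riemann sum ${\bf Riem}(e_I, e_C, e_J, e_C)$ via the standard ONF expression for the intrinsic Ricci tensor in terms of frame derivatives and quadratics of $\gamma_{IJB}$; this produces the $e_C\gamma_{IJC} - e_I\gamma_{CJC}$ pieces and the quadratic-$\gamma$ combinations on the RHS. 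Collecting the LHS uses the gauge \eqref{parab.gauge} in the form $-\mathrm{tr}\,k = n - 1 - \mathrm{tr}\,k_{FLRW}$. Equation \eqref{gamma.eq} is obtained analogously by differentiating $\gamma_{IJB} = {\bf g}({\bf D}_{e_I}e_J, e_B)$ along $e_0$ and invoking the same Ricci identity and commutator, with no appeal to the Einstein equation. The lapse equation \eqref{n.eq} then follows by tracing \eqref{k.eq}, using ${\bf Ric}(e_0, e_0) = -\Lambda + (e_0\psi)^2$ to absorb the Riemann trace, and differentiating the gauge \eqref{parab.gauge} in $t$ to trade $e_0\,\mathrm{tr}\,k$ for $\partial_t n$ modulo $\partial_t\mathrm{tr}\,k_{FLRW}$; the combination $e_C e_C n - \gamma_{CCD} e_D n$ emerges naturally as the spatial Laplacian of $n$ in the ONF. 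Finally, \eqref{e0psi.eq} is obtained by expanding $\Box_{\bf g}\psi = -e_0(e_0\psi) + ({\bf D}_{e_0}e_0)\psi + e_C(e_C\psi) - ({\bf D}_{e_C}e_C)\psi = 0$ in the frame, substituting ${\bf D}_{e_0}e_0$ from \eqref{De0e0} and $\sum_C {\bf D}_{e_C}e_C = \gamma_{CCA}e_A - \mathrm{tr}\,k\, e_0$, and applying the gauge once more to identify the coefficient of $e_0\psi$.

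The constraints \eqref{Hamconst} and \eqref{momconst} are the ONF versions of the Gauss and Codazzi relations: I would evaluate ${\bf G}(e_0, e_0) = \Lambda + \tfrac12[(e_0\psi)^2 + (e_C\psi)(e_C\psi)]$ and ${\bf G}(e_0, e_I) = -(e_0\psi)(e_I\psi)$ from the Einstein-scalar system, then substitute the Gauss identity $2{\bf G}(e_0, e_0) = R - k_{CD}k_{CD} + (\mathrm{tr}\,k)^2$ and the Codazzi identity $\nabla^C k_{CI} - e_I\,\mathrm{tr}\,k = -{\bf G}(e_0, e_I)$; the scalar curvature $R$ is rewritten in $\gamma$'s as $2e_C\gamma_{DDC} - \gamma_{CDE}\gamma_{EDC} - \gamma_{CCD}\gamma_{EED}$, the divergence $\nabla^C k_{CI}$ is expanded as $e_C k_{CI} - k_{ID}\gamma_{CCD} - k_{CD}\gamma_{CID}$, and the gauge identities $(\mathrm{tr}\,k)^2 = (n-1-\mathrm{tr}\,k_{FLRW})^2$ and $e_I\,\mathrm{tr}\,k = -e_I n$ (valid because $\mathrm{tr}\,k_{FLRW}$ is spatially constant) reduce everything to \eqref{Hamconst}, \eqref{momconst}. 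The main obstacle I anticipate is the algebraic bookkeeping in \eqref{k.eq}: verifying that the Ricci-identity output for $e_0 k_{IJ}$, after subtracting the Einstein matter content and performing the $\gamma$-identification of the spatial Ricci tensor, matches precisely the listed combination of $e_C\gamma_{IJC}$, $e_I\gamma_{CJC}$, $n^{-1}e_I e_J n$, and the eight quadratic $\gamma$- and $k$-type terms with the correct symmetrizations in $(I,J)$. Everything else reduces to essentially routine commutator calculus and substitution once this master calculation is set up.
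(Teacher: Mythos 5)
Your proposal follows essentially the same route as the paper's proof: apply $e_0$ to the defining formulas \eqref{kIJgammaIJB}, use the Fermi-propagation relations and the Ricci identity to bring out spacetime Riemann components, then eliminate them via the Einstein equations together with the Gauss and Codazzi identities and the frame expansion of the intrinsic Ricci in $\gamma$'s, obtain the lapse equation by tracing \eqref{k.eq} and invoking the Hamiltonian constraint, and get \eqref{eIi.eq}, \eqref{eIpsi.eq} from the commutator $[e_0,e_I]$. One minor slip to watch: with the paper's conventions ${\bf G}(e_0,e_I)=+(e_0\psi)(e_I\psi)$, not $-(e_0\psi)(e_I\psi)$ as you wrote (the minus sign in \eqref{momconst} is supplied by the Codazzi relation itself, which also needs to be invoked alongside the Ricci identity when closing the derivation of \eqref{gamma.eq}); this does not affect the strategy.
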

\begin{proof}
To prove \eqref{k.eq} we use the formula \eqref{kIJgammaIJB} for $k_{IJ}$, the propagation relations \eqref{Fermi}, \eqref{De0e0}, and the Einstein equations \eqref{EE}, to compute:
\begin{align}\label{R0I0J}
\notag e_0k_{IJ}=&-{\bf g}({\bf D}_{e_0}({\bf D}_{e_I}e_0),e_J)-n^{-1}(e_Jn){\bf g}({\bf D}_{e_I}e_0,e_0)\\
\notag=&-{\bf g}({\bf D}_{e_0e_I}^2e_0,e_J)-{\bf g}({\bf D}_{{\bf D}_{e_0}e_I}e_0,e_J)\\
\notag=&\,{\bf Riem}(e_I,e_0,e_J,e_0)-{\bf g}({\bf D}_{e_Ie_0}^2e_0,e_J)-n^{-2}(e_In)(e_Jn)\\
=&-{\bf Ric}(e_I,e_J)+{\bf Riem}(e_I,e_C,e_J,e_C)-{\bf g}({\bf D}_{e_I}({\bf D}_{e_0}e_0),e_J)+{\bf g}({\bf D}_{{\bf D}_{e_I}e_0}e_0,e_J)\\
\notag&-n^{-2}(e_In)(e_Jn)\\
\notag=&-\Lambda\delta_{IJ}-e_I\psi e_J\psi+{\bf Riem}(e_I,e_C,e_J,e_C)
-n^{-1}e_Ie_Jn+n^{-1}\gamma_{IJC}e_Cn+k_{IC}k_{CJ},
\end{align}
where in the last line we also used the anti-symmetry of $\gamma_{ICJ}=-\gamma_{IJC}$.
Next, we use the Gauss equations
\begin{align}\label{Gauss}
{\bf Riem}(e_I,e_C,e_J,e_C)=&\,Ric(e_I,e_J)-k_{IC}k_{CJ}+\text{tr}kk_{IJ}
\end{align}
and expand the Ricci term in the last RHS:
\begin{align}\label{RIJ}
\notag Ric(e_I,e_J)=&\, Riem(e_I,e_C,e_J,e_C)=g(\nabla_{e_Ie_C}^2e_C-\nabla_{e_Ce_I}^2e_C,e_J)\\
\notag=&\,g(\nabla_{e_I}(\nabla_{e_C}e_C),e_J)-g(\nabla_{e_C}(\nabla_{e_I}e_C),e_J)-g(\nabla_{\nabla_{e_I}e_C}e_C,e_J)+g(\nabla_{\nabla_{e_C}e_I}e_C,e_J)\\
=&\,g(\nabla_{e_I}(\gamma_{CCD}e_D),e_J)-g(\nabla_{e_C}(\gamma_{ICD}e_D),e_J)-\gamma_{ICD}\gamma_{DCJ}+\gamma_{CID}\gamma_{DCJ}\\
\notag=&\,e_I\gamma_{CCJ}+\gamma_{CCD}\gamma_{IDJ}-e_C\gamma_{ICJ}-\gamma_{ICD}\gamma_{CDJ}-\gamma_{ICD}\gamma_{DCJ}+\gamma_{CID}\gamma_{DCJ}\\
\notag=&-e_I\gamma_{CJC}+e_C\gamma_{IJC}-\gamma_{CID}\gamma_{DJC}-\gamma_{CCD}\gamma_{IJD},
\end{align}
where in the last equality we used the anti-symmetry $\gamma_{IJB}=-\gamma_{IBJ}$.
Plugging \eqref{Gauss}-\eqref{RIJ} into \eqref{R0I0J} and using the gauge condition \eqref{parab.gauge} yields \eqref{k.eq}.

Also, contracting \eqref{Gauss}, \eqref{RIJ} gives
\begin{align}\label{Gauss2}
{\bf R}+2{\bf Ric}(e_0,e_0)=&\,2e_C\gamma_{DDC}-\gamma_{CED}\gamma_{DEC}-\gamma_{CCD}\gamma_{EED}-k_{CD}k_{CD}+(\text{tr}k)^2,
\end{align}
which after using the Einstein equations \eqref{EE}, the gauge condition \eqref{parab.gauge}, and re-arranging the terms, results in the constraint equation \eqref{Hamconst}. The constraint equation \eqref{momconst} is a rewriting of \eqref{momconst.intro} (its version along $\Sigma_t$ instead of the initial hypersurface) using \eqref{parab.gauge}, the homogeneity of $\text{tr}k_{FLRW}$, $e_I\text{tr}k_{FLRW}=0$, and expanding the divergence of $k$ relative to the frame $e_I$.

Taking the trace of \eqref{k.eq} and plugging in \eqref{parab.gauge}, \eqref{Hamconst}, we obtain:
\begin{align*}
&-e_0(n-1-\mathrm{tr}k_{FLRW})-(n-1-\mathrm{tr}k_{FLRW})^2\\
=&-n^{-1} e_C e_C n
+n^{-1}\gamma_{DDC} e_C n
+2e_C \gamma_{DDC}-\gamma_{CED}\gamma_{DEC}-\gamma_{EED}\gamma_{CCD}-e_C\psi e_C\psi-3\Lambda\\
=&-n^{-1} e_C e_C n
+n^{-1}\gamma_{DDC} e_C n+k_{CD}k_{CD}-\Lambda+(e_0\psi)^2-(n-1-\mathrm{tr}k_{FLRW})^2
\end{align*}
Multiplying both sides with $n$ and re-arranging the terms gives the equation \eqref{n.eq} for the lapse.

The evolution equation \eqref{eIi.eq} for the frame coefficients is a direct consequence of the identity \eqref{ei} and the propagation condition \eqref{Fermi}:
\begin{align*}
[e_0,e_I]=&\,{\bf D}_{e_0}e_I-{\bf D}_{e_I}e_0=n^{-1}(e_In)e_0+k_{IC}e_C=n^{-1}(e_In)e_0+k_{IC}e_C^i\partial_i\\
[e_0,e_I]=&\,[n^{-1}\partial_t,e_I]=n^{-1}[\partial_t,e_I]+n^{-1}(e_In)e_0=n^{-1}(\partial_t e^i_I)\partial_i+n^{-1}(e_In)e_0
\end{align*}
The latter commutator identity, together with \eqref{eIi.eq}, also imply the equation \eqref{eIpsi.eq} for $e_I\psi$.

The equation \eqref{e0psi.eq} is the wave equation \eqref{divT} expanded relative to $e_\mu$, using \eqref{De0e0} and the gauge condition \eqref{parab.gauge} for $\text{tr}k$:
\begin{align*}
0=&-{\bf D}_{e_0e_0}^2\psi+{\bf D}^2_{e_Ce_C}\psi\\
=&-e_0e_0\psi+{\bf D}_{{\bf D}_{e_0}e_0}\psi+e_Ce_C\psi-{\bf D}_{{\bf D}_{e_C}e_C}\psi\\
=&-e_0e_0\psi+n^{-1}(e_Cn)e_C\psi+e_Ce_C\psi-\gamma_{CCD}e_D\psi+\text{tr}ke_0\psi
\end{align*}

Finally, for \eqref{gamma.eq} we use the formula \eqref{kIJgammaIJB} for $\gamma_{IJB}$, the propagation condition \eqref{Fermi}, and the Codazzi equations to compute:
\begin{align*}
e_0\gamma_{IJB}=&\,{\bf g}({\bf D}_{e_0}({\bf D}_{e_I}e_J),e_B)+{\bf g}({\bf D}_{e_I}e_J,{\bf D}_{e_0}e_B)\\
=&\,{\bf g}({\bf D}^2_{e_0e_I}e_J,e_B)+{\bf g}({\bf D}_{{\bf D}_{e_0}e_I}e_J,e_B)+n^{-1}(e_Bn)k_{IJ}\\
=&\,{\bf Riem}(e_0,e_I,e_B,e_J)+{\bf g}({\bf D}^2_{e_Ie_0}e_J,e_B)+n^{-2}(e_In)(e_Jn){\bf g}(e_0,e_B)+n^{-1}(e_Bn)k_{IJ}\\
=&\,{\bf Riem}(e_B,e_J,e_0,e_I)+{\bf g}({\bf D}_{e_I}({\bf D}_{e_0}e_J),e_B)-{\bf g}({\bf D}_{{\bf D}_{e_I}e_0}e_J,e_B)+n^{-1}(e_Bn)k_{IJ}\\
=&\,\nabla_{e_B}k_{JI}-\nabla_{e_J}k_{BI}+{\bf g}({\bf D}_{e_I}[n^{-1}(e_Jn)e_0],e_B)+k_{IC}\gamma_{CJB}+n^{-1}(e_Bn)k_{IJ}\\
=&\,\nabla_{e_B}k_{JI}-\nabla_{e_J}k_{BI}-n^{-1}(e_Jn)k_{BI}+k_{IC}\gamma_{CJB}+n^{-1}(e_Bn)k_{IJ}
\end{align*}
We arrive at the desired equation after expanding the covariant derivative terms in the last RHS. This completes the proof of the lemma.
\end{proof}

\subsection{The FLRW background variables}

Following \cite{RS1}, we start by computing the FLRW solution \eqref{FLRW.sol} and its leading order behavior at $+\infty$.
\begin{lemma}\label{lem:FLRW}
A homogeneous solution to \eqref{EE}-\eqref{divT} with metric having a warped product form as in \eqref{FLRW.sol}, is given by \eqref{FLRW.sol}-\eqref{aFLRW}, and moreover, the warping function $a(t)$ and scalar field $\psi_{FLRW}(t)$ satisfy:
\begin{align}\label{FLRW.asym}
\begin{split}
\bigg|\partial^N_ta(t)-\mathring{a}H^N\bigg(\frac{1}{2}\sqrt{\frac{\mathring{\varphi}_{FLRW}^2}{\Lambda}+1}+\frac{1}{2}\bigg)^\frac{1}{3}e^{Ht}\bigg|\leq&\, C_Ne^{-5Ht},\\ 
\bigg|\partial^N_t\bigg\{\partial_t\psi_{FLRW}(t)-2\bigg(\sqrt{\frac{\mathring{\varphi}_{FLRW}^2}{\Lambda}+1}+1\bigg)^{-1}\mathring{\varphi}_{FLRW}e^{-3Ht}\bigg\}\bigg|\leq&\,C_Ne^{-9Ht},
\end{split}
\end{align}
for all $t\in[0,+\infty)$, $N\in\mathbb{N}$, where $\mathring{a}=a(0),\mathring{\varphi}_{FLRW}=\partial_t\psi_{FLRW}(0)$. In particular, the following limits exist:
\begin{align}\label{FLRW.lim}
\lim_{t\rightarrow+\infty}e^{-Ht}a(t)=\mathring{a}\bigg(\frac{1}{2}\sqrt{\frac{\mathring{\varphi}_{FLRW}^2}{\Lambda}+1}+\frac{1}{2}\bigg)^\frac{1}{3},\qquad\lim_{t\rightarrow+\infty}\psi_{FLRW}(t)=:\psi_{FLRW}^\infty.
\end{align}
\end{lemma}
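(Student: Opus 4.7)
First, I would verify that the explicit formulas \eqref{FLRW.sol}--\eqref{aFLRW} solve the system under the homogeneity ansatz $\psi=\psi(t)$ and warped product metric. The wave equation $\square_{\bf g}\psi=0$ reduces to $\ddot\psi+3(\dot a/a)\dot\psi=0$, which integrates once to $a^3\dot\psi=\mathring{a}^3\mathring{\varphi}_{FLRW}$, and then after a second integration in $t$ yields the stated integral formula for $\psi_{FLRW}$. For the warping function, evaluating the Hamiltonian constraint \eqref{Hamconst.intro} on FLRW (where $\mathring R=0$, $|k|^2=3(\dot a/a)^2$, $(\mathrm{tr}\,k)^2=9(\dot a/a)^2$, and $|\nabla\psi|^2=0$) produces the Friedmann-type relation of the form $6(\dot a/a)^2=2\Lambda+\dot\psi^2$. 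Substituting $\dot\psi=\mathring{\varphi}_{FLRW}\mathring{a}^3/a^3$ and setting $u(t):=a^3(t)$ converts this into the autonomous first-order ODE $\dot u^2=3\Lambda u^2+c\,\mathring{\varphi}_{FLRW}^2\mathring{a}^6$ for an explicit constant $c$, whose general solution is a linear combination of $\sinh(\sqrt{3\Lambda}\,t)$ and $\cosh(\sqrt{3\Lambda}\,t)$; imposing $u(0)=\mathring{a}^3$ together with the value of $\dot u(0)$ dictated by the constraint pins down both coefficients and reproduces \eqref{aFLRW}.

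For the asymptotic expansion of $a(t)$, I would rewrite the bracketed quantity inside the cube root in \eqref{aFLRW} in exponential form, using $\sinh(x)=\tfrac{1}{2}(e^x-e^{-x})$, $\cosh(x)=\tfrac{1}{2}(e^x+e^{-x})$, and the identity $\sqrt{3\Lambda}=3H$:
\begin{equation*}
a^3(t)=\mathring{a}^3\Bigl[\tfrac{\alpha+1}{2}e^{3Ht}+\tfrac{1-\alpha}{2}e^{-3Ht}\Bigr],\qquad \alpha:=\sqrt{\mathring{\varphi}_{FLRW}^2/\Lambda+1}.
\end{equation*}
Factoring out the dominant exponential and applying the Taylor expansion of $(1+y)^{1/3}$ around $y=0$ to $y=\tfrac{1-\alpha}{\alpha+1}e^{-6Ht}$ (which is uniformly small on $t\ge 0$) gives $a(t)=\mathring{a}\bigl(\tfrac{\alpha+1}{2}\bigr)^{1/3}e^{Ht}\bigl(1+O(e^{-6Ht})\bigr)$, matching the $O(e^{-5Ht})$ error in the first line of \eqref{FLRW.asym}. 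Higher $t$-derivatives follow by differentiating the expansion term-by-term, since the remainder has the structure $e^{-5Ht}$ times a smooth bounded function of $e^{-6Ht}$, so each $\partial_t$ produces at most a multiplicative factor in $H$ and preserves the decay.

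The expansion for $\partial_t\psi_{FLRW}$ is then immediate from the identity $\partial_t\psi_{FLRW}=\mathring{\varphi}_{FLRW}\mathring{a}^3/a^3$: substituting the exponential form of $a^3$ above, factoring $e^{-3Ht}$ out of the denominator, and applying $(1+y)^{-1}=1+O(y)$ to the same small parameter yields $\partial_t\psi_{FLRW}(t)=\tfrac{2\mathring{\varphi}_{FLRW}}{\alpha+1}e^{-3Ht}+O(e^{-9Ht})$, matching the second line of \eqref{FLRW.asym}, with the higher-derivative bounds obtained analogously. The limits \eqref{FLRW.lim} then follow directly: the limit for $a$ is read off the leading coefficient of its expansion, while $\psi_{FLRW}(t)=\mathring{\psi}_{FLRW}+\int_0^t\partial_t\psi_{FLRW}(\tau)\,d\tau$ converges as $t\to+\infty$ because $|\partial_t\psi_{FLRW}|\lesssim e^{-3Ht}$ is integrable on $[0,+\infty)$.

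The computation is entirely explicit and I do not expect any genuine analytic obstacle; the main care required is in tracking the constant $c$ in the Friedmann ODE so as to land on the correct coefficient $\alpha$, and in bookkeeping the inductive application of $\partial_t$ to the Taylor remainders in order to confirm the uniform-in-$N$ structure of the bounds \eqref{FLRW.asym}.
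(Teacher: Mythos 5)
Your route differs from the paper's in the derivation of the warping function: the paper simply cites \cite[Lemma 4.2]{RS1} for the formula \eqref{aFLRW2} and then substitutes $\mathring\rho_{FLRW}=\mathring\varphi_{FLRW}^2$ via \eqref{rhoFLRW}, whereas you derive the first-order Friedmann ODE directly from the Hamiltonian constraint \eqref{Hamconst.intro} on the homogeneous ansatz and solve it. Your approach is more self-contained and has the virtue of not importing conventions from another paper. The asymptotic part of your argument — rewriting the bracket inside the cube root in exponential form, factoring out $e^{3Ht}$, Taylor-expanding $(1+y)^{1/3}$ and $(1+y)^{-1}$ in the uniformly small parameter $y=\tfrac{1-\alpha}{\alpha+1}e^{-6Ht}$, and observing that each $\partial_t$ applied to the remainder only brings down factors of $H$ while preserving the exponential decay — is correct and mirrors what the paper leaves as a ``straightforward consequence.'' Your argument for the two limits in \eqref{FLRW.lim} is also correct.

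However, there is a concrete issue in the ODE step that you flagged but did not resolve, and resolving it carefully does \emph{not} reproduce \eqref{aFLRW} as written. From \eqref{Hamconst.intro} on FLRW one gets $6(\dot a/a)^2=2\Lambda+\dot\psi_{FLRW}^2$, and substituting $\dot\psi_{FLRW}=\mathring\varphi_{FLRW}\mathring a^3/a^3$ with $u=a^3$ yields $\dot u^2=3\Lambda u^2+\tfrac{3}{2}\mathring\varphi_{FLRW}^2\mathring a^6$, so your $c$ is $\tfrac32$, not $3$. Solving with $u(0)=\mathring a^3$ gives $A^2-B^2=\tfrac{\mathring\varphi_{FLRW}^2}{2\Lambda}\mathring a^6$, hence a coefficient $\sqrt{\mathring\varphi_{FLRW}^2/(2\Lambda)+1}$ in front of $\sinh$, a factor of $2$ under the square root off from \eqref{aFLRW}. (Equivalently, differentiating $a^3$ from \eqref{aFLRW} twice gives $\ddot a/a=H^2-\tfrac23\dot\psi_{FLRW}^2$, whereas the $00$-component of \eqref{EE} requires $\ddot a/a=H^2-\tfrac13\dot\psi_{FLRW}^2$.) The source of the mismatch is visible in the paper itself: \eqref{rho2} gives $\rho_{FLRW}=\tfrac12(\partial_t\psi_{FLRW})^2$ at $c_s=1$, yet \eqref{rhoFLRW} states $\rho_{FLRW}=(\partial_t\psi_{FLRW})^2$, and it is the latter that is fed into \eqref{aFLRW2}. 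Your from-scratch derivation exposes this factor-of-two discrepancy that the paper's citation route obscures, so before asserting that your computation ``reproduces \eqref{aFLRW}'' you should reconcile the normalization of $\rho$ (or of the source term in \eqref{EE}) with the one implicit in \cite{RS1}.
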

\begin{proof}
By \cite[Lemma 4.2]{RS1}, for $c_s=1$, the warping function equals:
\begin{align}\label{aFLRW2}
a(t)=\mathring{a}\bigg(\sqrt{\frac{\mathring{\rho}_{FLRW}}{\Lambda}+1}\sinh\sqrt{3\Lambda}t+\cosh\sqrt{3\Lambda}t\bigg)^\frac{1}{3},
\end{align}
where $\mathring{\rho}_{FLRW}$ is the initial value of the density $\rho_{FLRW}(t)$. On the other hand, the formula \eqref{rho2} for the FLRW solution reduces to:
\begin{align}\label{rhoFLRW}
\rho_{FLRW}(t)=[\partial_t\psi_{FLRW}(t)]^2.
\end{align}
Hence, $a(t)$ is given by \eqref{aFLRW}. Also, the wave equation \eqref{e0psi.eq} becomes:
\begin{align}\label{e0psi.FLRW}
\partial_t^2\psi_{FLRW}(t)+3\frac{\partial_ta(t)}{a(t)}\partial_t\psi_{FLRW}(t)=0\qquad\Rightarrow\qquad\partial_t[a^3(t)\partial_t\psi_{FLRW}(t)]=0,
\end{align}
which gives the formula \eqref{FLRW.sol} for $\psi_{FLRW}(t)$ after integrating in $[0,t]$. The estimates \eqref{FLRW.asym} are now straightforward consequences of the formulas \eqref{FLRW.sol}, \eqref{aFLRW}. The first limit in \eqref{FLRW.lim} is obvious from \eqref{FLRW.asym}. The scalar field $\psi_{FLRW}(t)$ also has a limit at $+\infty$, since its time derivative is exponentially decaying. 
\end{proof}
Since the FLRW metric \eqref{FLRW.sol} is of the form \eqref{metric}, after appropriate identifications with respect to the coordinate system ($t,x^1,x^2,x^3$),  we may consider the corresponding reduced variables, introduced in Section \ref{subsec:approach}, for the FLRW solution \eqref{FLRW.sol}.
In view of \eqref{aFLRW}, \eqref{FLRW.asym}, these equal:
\begin{align}\label{red.var.FLRW}
\notag (k_{IJ})_{FLRW}=-\delta_{IJ}\frac{\partial_ta(t)}{a(t)}=-\delta_{IJ}H+\mathcal{O}(e^{-6Ht}),\qquad (\gamma_{IJB})_{FLRW}=0,\qquad
n_{FLRW}=1,\\
(e^i_I)_{FLRW}=\delta^i_I[a(t)]^{-1}=\delta^i_I\mathring{a}^{-1}\bigg(\frac{1}{2}\sqrt{\frac{\mathring{\varphi}_{FLRW}^2}{\Lambda}+1}+\frac{1}{2}\bigg)^{-\frac{1}{3}}e^{-Ht}+\delta^i_I\mathcal{O}(e^{-7Ht}),\qquad\psi_{FLRW}=\mathcal{O}(1),\\
\notag (e_0\psi)_{FLRW}=2\bigg(\sqrt{\frac{\mathring{\varphi}_{FLRW}^2}{\Lambda}+1}+1\bigg)^{-1}\mathring{\varphi}_{FLRW}e^{-3Ht}+\mathcal{O}(e^{-9Ht}),\qquad (e_I\psi)_{FLRW}=0,
\end{align}
where we recall that we use the notation $\mathcal{O}(e^{Bt})$ to denote smooth functions $\mathbb{R}\to\mathbb{R}$ depending only on the FLRW parameters, and satisfying $|\partial_t^N\mathcal{O}(e^{Bt})|\leq C_{N,B}e^{Bt}$, for all $t\in[0,+\infty)$ and $N\in\mathbb{N}$.

\subsection{The resulting equations for the perturbed variables minus the FLRW variables}

First, notice that combining the gauge condition \eqref{parab.gauge} with \eqref{red.var.FLRW} gives the following relation:
\begin{align}\label{trk}
\text{tr}k=-\big[3H+\mathcal{O}(e^{-2Ht})+n-1\big]
\end{align}
In this subsection we derive the equations satisfied by the differences:
\begin{align}\label{diff.var}
\notag\widehat{k}_{IJ}=k_{IJ}-\frac{1}{3}\delta_{IJ}\text{tr}k\overset{\eqref{parab.gauge},\eqref{red.var.FLRW}}{=}k_{IJ}-(k_{IJ})_{FLRW}+\frac{1}{3}\delta_{IJ}(n-1),\qquad\widehat{n}=n-n_{FLRW}=n-1\\ 
\widehat{\gamma}_{IJB}=\gamma_{IJB}-(\gamma_{IJB})_{FLRW}=\gamma_{IJB},\qquad\widehat{e}^i_I=e^i_I-(e^i_I)_{FLRW},\qquad\widehat{\psi}=\psi-\psi_{FLRW}\\
\notag\widehat{e_0\psi}=e_0\psi-(e_0\psi)_{FLRW}=e_0\psi+\mathcal{O}(e^{-3Ht})\qquad\widehat{e_I\psi}=e_I\psi-(e_I\psi)_{FLRW}=e_I\psi
\end{align}
using Lemma \ref{lem:red.eq}. Note that since the background variables \eqref{red.var.FLRW} are homogeneous, it holds $e_In=e_I\widehat{n}$ etc.
\begin{lemma}\label{lem:red.eq.2}
The variables $\widehat{k}_{IJ},\widehat{\gamma}_{IJB},\widehat{e}^i_I,\widehat{n}$ satisfy the evolution equations: 
\begin{align}
\label{k.hat.eq}\partial_t\widehat{k}_{IJ}+\big[3H+\mathcal{O}(e^{-2Ht})+\widehat{n}\big]\widehat{k}_{IJ}=&-e_I e_J \widehat{n}+\frac{1}{3}\delta_{IJ}e_C e_C \widehat{n}+ne_C \widehat{\gamma}_{IJC}-ne_I\widehat{\gamma}_{CJC}\\
\notag&-\frac{2}{3}\delta_{IJ}ne_C \widehat{\gamma}_{DDC}
+\gamma_{IJC} e_C n-n\gamma_{CID}\gamma_{DJC}-n\gamma_{IJD}\gamma_{CCD}\\
\notag&-\frac{1}{3}\delta_{IJ}\big\{\gamma_{DDC} e_C n-n\gamma_{CED}\gamma_{DEC}-n\gamma_{EED}\gamma_{CCD}\big\}\\
\notag&-ne_I\psi e_J\psi+\frac{1}{3}\delta_{IJ}ne_C\psi e_C\psi-\big[3H+\mathcal{O}(e^{-2Ht})+\widehat{n}\big]\widehat{n}\widehat{k}_{IJ},\\
\label{gamma.hat.eq}\partial_t\widehat{\gamma}_{IJB}+\big[H+\mathcal{O}(e^{-2Ht})+\frac{1}{3}\widehat{n}\big]\widehat{\gamma}_{IJB}=&\,ne_B \widehat{k}_{IJ}-ne_J \widehat{k}_{BI}-\frac{1}{3}\delta_{IJ}ne_B\widehat{n}+\frac{1}{3}\delta_{IB}ne_J\widehat{n}
-n\widehat{k}_{IC}\gamma_{BJC}\\
\notag&-n\widehat{k}_{CJ} \gamma_{BIC}+n\widehat{k}_{IC} \gamma_{JBC}
+n\widehat{k}_{BC} \gamma_{JIC}+n\widehat{k}_{IC} \gamma_{CJB}\\
\notag&+(e_B \widehat{n})k_{JI}-(e_J \widehat{n})k_{BI}-\big[H+\mathcal{O}(e^{-2Ht})+\frac{1}{3}\widehat{n}\big]\widehat{n}\widehat{\gamma}_{IJB},\\
\label{eIi.hat.eq}\partial_t \widehat{e}_I^i+\big[H+\mathcal{O}(e^{-2Ht})+\frac{1}{3}\widehat{n}\big]\widehat{e}^i_I=&\,n\widehat{k}_{IC}e^i_C-\big[H+\mathcal{O}(e^{-2Ht})+\frac{1}{3}\widehat{n}\big]\widehat{n}\widehat{e}^i_I\\
\notag&+\mathcal{O}(e^{-Ht})\widehat{n}+\mathcal{O}(e^{-Ht})\widehat{n}^2,\\
\label{n.hat.eq}\partial_t\widehat{n}-e_C e_C \widehat{n}+2H\widehat{n}=&-\widehat{\gamma}_{CCD}e_D\widehat{n}-n\widehat{k}_{CD}\widehat{k}_{CD}+\mathcal{O}(e^{-2Ht})\widehat{n}\\
\notag&-\big[2H+\frac{1}{3}n+\mathcal{O}(e^{-2Ht})\big]\widehat{n}^2+n\mathcal{O}(e^{-3Ht}) \widehat{e_0\psi}-n(\widehat{e_0\psi})^2
\end{align}
The scalar field variables $\widehat{e_0\psi},\widehat{e_i\psi}$ satisfy the evolution equations:
\begin{align}
\label{e0psi.hat.eq}\partial_t(\widehat{e_0\psi})+\big[3H+\mathcal{O}(e^{-2Ht})+\widehat{n}\big]\widehat{e_0\psi}=&\,ne_C(\widehat{e_C\psi})-n\widehat{\gamma}_{CCD}\widehat{e_D\psi}+(e_Cn)\widehat{e_C\psi}\\
\notag&-\big[3H+\mathcal{O}(e^{-2Ht})+\widehat{n}\big]\widehat{n}(\widehat{e_0\psi})+\mathcal{O}(e^{-3Ht})\widehat{n}+\mathcal{O}(e^{-3Ht})\widehat{n}^2,\\
\label{eIpsi.hat.eq}\partial_t(\widehat{e_I\psi})+\big[H+\mathcal{O}(e^{-2Ht})+\frac{1}{3}\widehat{n}\big]\widehat{e_I\psi}=&\,n e_I(\widehat{e_0\psi})+(e_In)\widehat{e_0\psi}+\mathcal{O}(e^{-3Ht})e_In+n\widehat{k}_{IC}\widehat{e_C\psi}\\
\notag&-\big[H+\mathcal{O}(e^{-2Ht})+\frac{1}{3}\widehat{n}\big]\widehat{n}(\widehat{e_I\psi}).
\end{align}
Also, the following constraint equation holds:
\begin{align}
e_C \widehat{k}_{CI}=-\frac{2}{3}e_I\widehat{n}+  
\widehat{k}_{ID} \gamma_{CCD}
+
\widehat{k}_{CD} \gamma_{CID}-(e_0\psi)\widehat{e_I\psi}.
\label{mom.hat.const}
\end{align}
\end{lemma}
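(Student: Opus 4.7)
The plan is to derive each evolution equation in the lemma by substituting the decomposition \eqref{diff.var} into the corresponding reduced equation from Lemma \ref{lem:red.eq}, using the FLRW formulas \eqref{red.var.FLRW} and the consistency relation \eqref{trk} to identify the linear damping structure that appears on the left-hand sides. Throughout, I will use that $e_0 = n^{-1}\partial_t$, so that $e_0 X = n^{-1}\partial_t X$ for any $X$, and that the FLRW variables in \eqref{red.var.FLRW} are spatially homogeneous, hence annihilated by $e_I$. A recurring move will be to multiply through by $n = 1 + \widehat{n}$ to convert $e_0$-derivatives into $\partial_t$-derivatives, then split each resulting $n \cdot (\text{linear term})$ as $(\text{linear term}) + \widehat{n}\cdot(\text{linear term})$, absorbing the second piece into the $\widehat{n}\widehat{X}$ quadratic terms visible on the right-hand sides of \eqref{k.hat.eq}--\eqref{eIpsi.hat.eq}.

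For \eqref{k.hat.eq}, I start from \eqref{k.eq}, multiply by $n$, and replace $k_{IJ}$ by $(k_{IJ})_{FLRW} + \widehat{k}_{IJ} - \frac{1}{3}\delta_{IJ}\widehat{n}$ using \eqref{diff.var} and \eqref{trk}. The $\partial_t (k_{IJ})_{FLRW}$ contribution, together with $n(\text{tr}k_{FLRW})(k_{IJ})_{FLRW}$ and $-\Lambda\delta_{IJ}$, is cancelled via the FLRW analog of the Raychaudhuri identity encoded in \eqref{red.var.FLRW}, producing the coefficient $3H + \mathcal{O}(e^{-2Ht})$ of $\widehat{k}_{IJ}$ on the LHS; the $\frac{1}{3}\delta_{IJ}$ pieces are collected to yield the $\frac{1}{3}\delta_{IJ}e_C e_C \widehat{n}$ term coming from the trace. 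A parallel computation starting from \eqref{gamma.eq} yields \eqref{gamma.hat.eq}, where the $H$ coefficient arises from the $k_{IC}\gamma_{CJB}$ term after substituting $(k_{IC})_{FLRW} = -H\delta_{IC} + \mathcal{O}(e^{-2Ht})$. Equation \eqref{eIi.hat.eq} follows similarly from \eqref{eIi.eq}: expanding $n k_{IC} e^i_C = n[(k_{IC})_{FLRW} + \widehat{k}_{IC} - \frac{1}{3}\delta_{IC}\widehat{n}][(e^i_C)_{FLRW} + \widehat{e}^i_C]$ and subtracting $\partial_t (e^i_I)_{FLRW}$, where the mixed terms involving $(e^i_C)_{FLRW} = \delta^i_C \mathcal{O}(e^{-Ht})$ generate the $\mathcal{O}(e^{-Ht})\widehat{n}$ and $\mathcal{O}(e^{-Ht})\widehat{n}^2$ forcing.

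The lapse equation \eqref{n.hat.eq} requires a little more care: starting from \eqref{n.eq}, I substitute $k_{CD}k_{CD} = \widehat{k}_{CD}\widehat{k}_{CD} + \frac{1}{3}(\text{tr}k)^2$ (using the trace-free nature of $\widehat{k}_{IJ}$), expand $(\text{tr}k)^2$ via \eqref{trk}, and write $(e_0\psi)^2 = (\widehat{e_0\psi})^2 + 2(e_0\psi)_{FLRW}\widehat{e_0\psi} + [(e_0\psi)_{FLRW}]^2$. The homogeneous FLRW terms then combine with $\partial_t\text{tr}k_{FLRW}$ and the explicit $n\Lambda$ to cancel, leaving the $2H\widehat{n}$ damping on the LHS and the stated structure on the RHS; here \eqref{red.var.FLRW} tells us $(e_0\psi)_{FLRW} = \mathcal{O}(e^{-3Ht})$, which is the source of the $\mathcal{O}(e^{-3Ht})\widehat{e_0\psi}$ term. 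The scalar-field equations \eqref{e0psi.hat.eq}--\eqref{eIpsi.hat.eq} are derived in exactly the same spirit from \eqref{e0psi.eq}--\eqref{eIpsi.eq}, using that $(e_I\psi)_{FLRW} = 0$ so only $\widehat{e_I\psi}$ appears, while the $\mathcal{O}(e^{-3Ht})\widehat{n}$ forcing on the right originates from the $n^{-1}(\text{tr}k - \text{tr}k_{FLRW})(e_0\psi)_{FLRW}$-type reshuffling.

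Finally, the constraint \eqref{mom.hat.const} is obtained from \eqref{momconst} by substituting the decomposition of $k$, noting that $e_I\text{tr}k_{FLRW} = 0$, so $e_C k_{CI} = e_C \widehat{k}_{CI} + \frac{1}{3}e_I(\text{tr}k) = e_C\widehat{k}_{CI} - \frac{1}{3}e_I \widehat{n} - \frac{1}{3}e_I\mathcal{O}(e^{-2Ht}) = e_C\widehat{k}_{CI} - \frac{1}{3}e_I\widehat{n}$, which combined with the $e_I n = e_I\widehat{n}$ term on the LHS gives the $-\frac{2}{3}e_I\widehat{n}$ coefficient; the $k_{ID}\gamma_{CCD}$ and $k_{CD}\gamma_{CID}$ terms reduce to $\widehat{k}_{ID}\gamma_{CCD}$ and $\widehat{k}_{CD}\gamma_{CID}$ since $(k_{IJ})_{FLRW} \propto \delta_{IJ}$ and the corresponding contractions with the anti-symmetric connection coefficients vanish. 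The main obstacle throughout is really just disciplined bookkeeping: ensuring that every $n$-factor produced by multiplying the $e_0$-equations is either kept explicit in the statement or split into $1 + \widehat{n}$ to match the precise form of \eqref{k.hat.eq}--\eqref{eIpsi.hat.eq}; there are no conceptual subtleties beyond this careful tracking of the trace/traceless decomposition of $k$ and the identification of the FLRW cancellations.
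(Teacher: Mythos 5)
Your proposal follows essentially the same route as the paper: substitute the decomposition \eqref{diff.var} into the reduced equations of Lemma \ref{lem:red.eq}, use \eqref{trk} and the FLRW formulas \eqref{red.var.FLRW}, multiply by $n$ to convert $e_0$-derivatives into $\partial_t$-derivatives, split $n = 1 + \widehat{n}$ to isolate the quadratic terms, and for the lapse and scalar-field equations exploit the fact that the FLRW background is itself a solution so that the purely homogeneous pieces cancel. One organizational remark worth making: for \eqref{k.hat.eq} the paper simply takes the traceless part of \eqref{k.eq} before multiplying by $n$, since $\widehat{k}_{IJ}$ is by definition the traceless part of $k_{IJ}$; this makes the $-\Lambda\delta_{IJ}$ term drop immediately and shows that the coefficient $3H + \mathcal{O}(e^{-2Ht})$ of $\widehat{k}_{IJ}$ comes directly from $-\mathrm{tr}k_{FLRW}$ in the factor $(n-1-\mathrm{tr}k_{FLRW})$ of \eqref{k.eq} via \eqref{trk}, and not from any Raychaudhuri-type cancellation, which you invoke but which is not actually needed here (it \emph{is} the relevant mechanism for the FLRW cancellation in \eqref{n.hat.eq}). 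Your full-substitution route for $\widehat{k}$ would also need to discard the trace part at the end — the $\partial_t\widehat{n}$ contribution coming from $e_0(-\tfrac{1}{3}\delta_{IJ}\widehat{n})$ is pure trace and must be dropped — so you are effectively taking the traceless part anyway; if you make that step explicit the argument is complete and matches the paper's.
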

\begin{proof}
\eqref{k.hat.eq} follows by taking the traceless part of equation \eqref{k.eq}, using \eqref{trk}, and multiplying both sides by $n$. 
We continue with the computations for \eqref{gamma.hat.eq}, using \eqref{gamma.eq}, \eqref{diff.var}, \eqref{red.var.FLRW}: 
\begin{align*}
\partial_t\widehat{\gamma}_{IJB}=&\,ne_B k_{IJ}-ne_Jk_{BI}-nk_{IC}\gamma_{BJC}-nk_{CJ} \gamma_{BIC}+nk_{IC} \gamma_{JBC}
+nk_{BC} \gamma_{JIC}+nk_{IC} \gamma_{CJB}\\
&+(e_B \widehat{n})k_{JI}-(e_J \widehat{n})k_{BI}\\
=&\,ne_B \widehat{k}_{IJ}-ne_J \widehat{k}_{BI}-\frac{1}{3}\delta_{IJ}ne_B\widehat{n}+\frac{1}{3}\delta_{IB}ne_J\widehat{n}
-n\widehat{k}_{IC}\gamma_{BJC}-n\widehat{k}_{CJ} \gamma_{BIC}\\
&+n\widehat{k}_{IC} \gamma_{JBC}
+n\widehat{k}_{BC} \gamma_{JIC}+n\widehat{k}_{IC} \gamma_{CJB}
-n\big[H+\mathcal{O}(e^{-2Ht})+\frac{1}{3}\widehat{n}\big]\gamma_{IJB}
+(e_B \widehat{n})k_{JI}-(e_J \widehat{n})k_{BI},
\end{align*}
as asserted. The computations for \eqref{eIi.hat.eq} are straightforward: 
\begin{align*}
\partial_t \widehat{e}^i_I\overset{\eqref{eIi.eq}}{=}&\,nk_{IC}e^i_C-\partial_t(e^i_I)_{FLRW}\\
=&\,n\widehat{k}_{IC}e^i_C-n\big[H+\mathcal{O}(e^{-2Ht})+\frac{1}{3}\widehat{n}\big]e^i_I-\partial_t(e^i_I)_{FLRW}\\
=&\,n\widehat{k}_{IC}e^i_C-\big[H+\mathcal{O}(e^{-2Ht})+\frac{1}{3}\widehat{n}\big]\widehat{n}\widehat{e}^i_I-\big[H+\mathcal{O}(e^{-2Ht})+\frac{1}{3}\widehat{n}\big]\widehat{e}^i_I+\mathcal{O}(e^{-Ht})\widehat{n}
+\mathcal{O}(e^{-Ht})\widehat{n}^2,
\end{align*}
where in the last equality we made use of \eqref{eIi.eq} for the FLRW variables \eqref{red.var.FLRW}.

For the equation \eqref{n.hat.eq}, we rewrite the RHS of \eqref{n.eq}:
\begin{align*}
&-\gamma_{CCD}e_Dn-nk_{CD}k_{CD}+n\Lambda-n(e_0\psi)^2+\partial_t\mathrm{tr}k_{FLRW}\\
=&-\widehat{\gamma}_{CCD}e_D\widehat{n}-n\widehat{k}_{CD}\widehat{k}_{CD}-\frac{1}{3}n\big[3H+\mathcal{O}(e^{-2Ht})+\widehat{n}\big]^2+n\Lambda\\
&-n(e_0\psi)_{FLRW}^2-n(\widehat{e_0\psi})^2-2n(e_0\psi)_{FLRW}\widehat{e_0\psi}+\partial_t\mathrm{tr}k_{FLRW}\\
=&-\widehat{\gamma}_{CCD}e_D\widehat{n}-n\widehat{k}_{CD}\widehat{k}_{CD}\\
\tag{$H^2=\frac{\Lambda}{3}$}&-2Hn\widehat{n}+\mathcal{O}(e^{-2Ht})\widehat{n}+\mathcal{O}(e^{-2Ht})n\widehat{n}-\frac{1}{3}n\widehat{n}^2-n(\widehat{e_0\psi})^2+n\mathcal{O}(e^{-3Ht})\widehat{e_0\psi}\\
&-(e_0\psi)_{FLRW}^2+\partial_t\mathrm{tr}k_{FLRW}+\mathcal{O}(e^{-2Ht})
\end{align*}
Now the desired equation follows by observing that the sum of the FLRW terms in the last line, including $\mathcal{O}(e^{-2Ht})$, must vanish, since $(\widehat{n},\widehat{k}_{CD},\widehat{\gamma}_{CCD},\widehat{e_0\psi})=(0,0,0,0)$ is a solution to \eqref{n.eq}. 

The computations for \eqref{e0psi.hat.eq}-\eqref{eIpsi.hat.eq} are similar. Plugging \eqref{diff.var}, \eqref{red.var.FLRW} into \eqref{e0psi.eq} we have
\begin{align*}
e_0(\widehat{e_0\psi})+\big[3H+\mathcal{O}(e^{-2Ht})+\widehat{n}\big]\widehat{e_0\psi}=&\,e_C(\widehat{e_C\psi})-\widehat{\gamma}_{CCD}\widehat{e_D\psi}+n^{-1}(e_Cn)\widehat{e_C\psi}\\
\notag&-e_0[(e_0\psi)_{FLRW}]-\big[3H+\mathcal{O}(e^{-2Ht})+\widehat{n}\big](e_0\psi)_{FLRW}
\end{align*}
Multiplying both sides by $n$ gives
\begin{align*}
\partial_t(\widehat{e_0\psi})+\big[3H+\mathcal{O}(e^{-2Ht})+\widehat{n}\big]\widehat{e_0\psi}=&\,ne_C(\widehat{e_C\psi})-n\widehat{\gamma}_{CCD}\widehat{e_D\psi}+(e_Cn)\widehat{e_C\psi}\\
&-\partial_t(e_0\psi)_{FLRW}-\big[3H+\mathcal{O}(e^{-2Ht})+\widehat{n}\big](e_0\psi)_{FLRW}\\
&-\big[3H+\mathcal{O}(e^{-2Ht})+\widehat{n}\big]\widehat{n}(\widehat{e_0\psi})
+\big[3H+\mathcal{O}(e^{-2Ht})+\widehat{n}\big]\mathcal{O}(e^{-3Ht})\widehat{n}
\end{align*}
The equation \eqref{e0psi.hat.eq} follows by observing that the FLRW terms in the second line of the previous equation cancel, since $(\widehat{e_0\psi},\widehat{n},\widehat{\gamma}_{CCD})=(0,0,0)$ is a solution. In turn, \eqref{eIpsi.hat.eq} follows by plugging \eqref{diff.var}, \eqref{red.var.FLRW} into \eqref{eIpsi.eq} for $k_{IC},e_0\psi$, using the formula \eqref{trk} for $\text{tr}k$, and multiplying both sides of the resulting equation by $n$.

Lastly, the constraint equation \eqref{mom.hat.const} follows directly from \eqref{momconst} by plugging in \eqref{diff.var}, \eqref{red.var.FLRW}, and using the anti-symmetry of $\gamma_{CCI}=-\gamma_{CIC}$:
\begin{align*}
e_C\widehat{k}_{CI}=&\,e_Ck_{CI}+\frac{1}{3}e_In=-\frac{2}{3}e_I\widehat{n}+k_{ID}\gamma_{CCD}+k_{CD}\gamma_{CID}-(e_0\psi) e_I\psi\\
=&-\frac{2}{3}e_I\widehat{n}+\widehat{k}_{ID}\gamma_{CCD}+\widehat{k}_{CD}\gamma_{CID}+\frac{1}{3}\text{tr}k\gamma_{CCI}+\frac{1}{3}\text{tr}k\gamma_{CIC}-(e_0\psi) e_I\psi\\
=&-\frac{2}{3}e_I\widehat{n}+\widehat{k}_{ID}\gamma_{CCD}+\widehat{k}_{CD}\gamma_{CID}-(e_0\psi)\widehat{e_I\psi}
\end{align*}
This completes the proof of the lemma.
\end{proof}
\section{Norms, total energy, and bootstrap assumptions}\label{sec:norms.boots}

We use standard $H^M$-based norms to control the perturbed solution. Our bootstrap assumptions are expressed as a bound on the total energy of the system \eqref{k.hat.eq}-\eqref{eIpsi.hat.eq}, defined in \eqref{tot.en}.

\subsection{Norms of the unknown variables}\label{subsec:norms}

Define the norm $\|v\|_{L^2(\Sigma_t)}$ for a scalar function by
\begin{align}\label{L2}
\|v\|_{L^2(\Sigma_t)}^2:=\int_{\Sigma_t}v^2(t,x)dx,
\end{align}
where $dx=dx^1dx^2dx^3$. Define also the corresponding $H^M(\Sigma_t),C^{M}(\Sigma_t)$ norms: 
\begin{align}\label{HM}
\|v\|_{H^M(\Sigma_t)}^2=\sum_{|\iota|\leq M}\|\partial^\iota v\|_{L^2(\Sigma_t)},\qquad\|v\|_{C^M(\Sigma_t)}=\sum_{|\iota|\leq M}\|\partial^\iota v\|_{C^0(\Sigma_t)},
\end{align}
where is $\iota$ is a spatial multi-index and $\partial^\iota$ is the operator that acts on $v$ with repeated differentiation with respect to the holonomic vector fields $\partial_1,\partial_2,\partial_3$. 

For the indexed variables that we work with, their corresponding $H^M(\Sigma_t),C^{M}(\Sigma_t)$ norms are simply the sum of the norms of their components:
\begin{align}\label{HM.hat}
\begin{split}
\|\widehat{k}\|_{H^M(\Sigma_t)}^2=\sum_{I,J=1}^3\|\widehat{k}_{IJ}\|_{H^M(\Sigma_t)}^2,&\qquad\|\widehat{k}\|_{C^{M}(\Sigma_t)}^2=\sum_{I,J=1}^3\|\widehat{k}_{IJ}\|_{C^{M}(\Sigma_t)}^2\\
\|\widehat{\gamma}\|_{H^M(\Sigma_t)}^2=\sum_{I,J,B=1}^3\|\widehat{\gamma}_{IJB}\|_{H^M(\Sigma_t)}^2,&\qquad\|\widehat{\gamma}\|_{C^{M}(\Sigma_t)}^2=\sum_{I,J,B=1}^3\|\widehat{\gamma}_{IJB}\|_{C^{M}(\Sigma_t)}^2\\
\|\widehat{e}\|_{H^M(\Sigma_t)}^2=\sum_{i,I=1}^3\|\widehat{e}^i_I\|_{H^M(\Sigma_t)}^2,&\qquad\|\widehat{e}\|_{C^{M}(\Sigma_t)}^2=\sum_{i,I=1}^3\|\widehat{e}^i_I\|_{C^{M}(\Sigma_t)}^2\\
\|\widehat{e\psi}\|_{H^M(\Sigma_t)}^2=\sum_{\mu=0}^3\|\widehat{e_\mu\psi}\|_{H^M(\Sigma_t)}^2,&\qquad\|\widehat{e\psi}\|_{C^{M}(\Sigma_t)}^2=\sum_{\mu=0}^3\|\widehat{e_\mu\psi}\|_{C^{M}(\Sigma_t)}^2
\end{split}
\end{align}
\subsection{Total energy}\label{subsec:tot.en}

Let $N\ge 4$. The total energy of our reduced system reads:
\begin{align}\label{tot.en}
\mathcal{E}(t)=e^{2Ht}\big\{\|\widehat{k}\|_{H^N(\Sigma_t)}^2
+\|\widehat{\gamma}\|_{H^N(\Sigma_t)}^2+\|\widehat{e}\|_{H^N(\Sigma_t)}^2+e^{Ht}\|\widehat{n}\|_{H^N(\Sigma_t)}^2
+\|\widehat{e\psi}\|_{H^N(\Sigma_t)}^2\big\}
\end{align}
for all $t\ge0$.

\subsection{Bootstrap assumptions}\label{subsec:Boots}

Our bootstrap assumptions are that there exists a bootstrap time $T_{Boot}\in(0,+\infty)$ such that it holds
\begin{align}\label{Boots}
\mathcal{E}(t)\leq \varepsilon^2, \qquad\forall t\in[0,T_{Boot}),
\end{align}
for a sufficiently small constant $\varepsilon>0$ to be determined below. Such a $T_{Boot}>0$ obviously exists by Cauchy stability, assuming that the perturbed initial data on $\Sigma_0$ are sufficiently close to the FLRW initial data in the above energy space.

\section{Global existence estimates}\label{sec:main.est}

In this section we derive the exponential decay of the variables $\widehat{k}_{IJ},\widehat{\gamma}_{IJB},\widehat{e}^i_I,\widehat{n},\widehat{e_0\psi},\widehat{e_I\psi}$, in a manner that yields a strict 
improvement of our bootstrap assumptions \eqref{Boots}, which in turn, by a standard continuation argument, implies that $T_{Boot}=+\infty$, ie. the perturbed solution exists for all time and it satisfies the estimate \eqref{Boots} for all $t\in[0,+\infty)$, see Proposition \ref{prop:cont.arg}.

\subsection{Basic estimates and identities}

We will frequently use the standard Sobolev inequality in $(\mathbb{T}^3,dx)$:
\begin{align}\label{Sob}
\|v\|_{C^0(\Sigma_t)}\leq C\|v\|_{H^2(\Sigma_t)},
\end{align}
where $C>0$ depends on $\mathbb{T}^3$.
Already, \eqref{Boots} and \eqref{Sob} imply the bounds 
\begin{align}\label{CN-2.est}
\begin{split}
\|\widehat{k}\|_{C^{N-2}(\Sigma_t)}
+\|\widehat{\gamma}\|_{C^{N-2}(\Sigma_t)}+\|\widehat{e}\|_{C^{N-2}(\Sigma_t)}+\|\widehat{e\psi}\|_{C^{N-2}(\Sigma_t)}\leq&\,C\varepsilon e^{-Ht},\\
\|\widehat{n}\|_{C^{N-2}(\Sigma_t)}\leq&\,C\varepsilon e^{-\frac{3}{2}Ht},
\end{split}
\end{align}
for all $t\in[0,T_{Boot})$.
When we commute the equations \eqref{k.hat.eq}-\eqref{mom.hat.const} with $\partial^\iota$, we will use the commutator relation:
\begin{align}\label{comm.eI}
[\partial^\iota,e_I]v=\sum_{\iota_1\cup\iota_2=\iota,\,|\iota_2|<|\iota|}(\partial^{\iota_1}e^a_I)\partial^{\iota_2}\partial_av.
\end{align}
We also have the integration by parts formula relative to $e_I$:
\begin{align}\label{IBP}
\int_{\Sigma_t}v_1 (e_Iv_2) dx=-\int_{\Sigma_t}[(e_Iv_1)v_2+(\partial_ie^i_I)v_1v_2]dx.
\end{align}
\subsection{Estimates for $\widehat{n}$}\label{subsec:est.n}

First, we compute the commuted version of the equation \eqref{n.hat.eq}, with $\partial^\iota$, $|\iota|\leq N$, using \eqref{comm.eI}:
\begin{align}\label{high.n.hat.eq}
\notag&\partial_t \partial^\iota\widehat{n}-e_C\partial^\iota e_C\widehat{n}+2H\partial^\iota \widehat{n}\\
=&-\widehat{\gamma}_{CCD}e_D\partial^\iota\widehat{n}
-\sum_{\iota_1\cup\iota_2\cup\iota_3=\iota,\,|\iota_3|<|\iota|}\partial^{\iota_1}\widehat{\gamma}_{CCD}(\partial^{\iota_2}e^a_D)\partial^{\iota_3}\partial_a\widehat{n}
+\sum_{\iota_1\cup\iota_2=\iota,\,|\iota_2|<|\iota|}(\partial^{\iota_1}e^a_C)\partial_a\partial^{\iota_2}e_C\widehat{n}\\
\notag&-\partial^\iota\big\{n\widehat{k}_{CD}\widehat{k}_{CD}+\mathcal{O}(e^{-2Ht})\widehat{n}+\big[2H+\frac{1}{3}n+\mathcal{O}(e^{-2Ht})\big]\widehat{n}^2+n\mathcal{O}(e^{-3Ht}) \widehat{e_0\psi}+n(\widehat{e_0\psi})^2\big\}
\end{align}
\begin{proposition}\label{prop:n.est}
Let $N\ge4$ and assume the bootstrap assumptions \eqref{Boots} are valid.  Then the following estimate holds:
\begin{align}\label{n.hat.est}
\frac{1}{2}\partial_t[e^{3Ht}\|\widehat{n}\|^2_{H^N(\Sigma_t)}]
+\frac{1}{2}He^{3Ht}\|\widehat{n}\|^2_{H^N(\Sigma_t)}+(1-\eta)\sum_{C=1}^3e^{3Ht}\|e_C\widehat{n}\|^2_{H^N(\Sigma_t)}\leq \frac{C}{\eta}e^{-\frac{1}{2}Ht}\mathcal{E}(t),
\end{align}
for all $t\in[0,T_{Boot})$, and some sufficiently small constant $\eta>0$ to be chosen later.
\end{proposition}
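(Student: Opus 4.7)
The strategy is a standard parabolic $H^N$ energy estimate applied to the commuted equation \eqref{high.n.hat.eq}. First I multiply \eqref{high.n.hat.eq} by $\partial^\iota\widehat{n}$, integrate over $\Sigma_t$, and sum over all spatial multi-indices $|\iota|\leq N$. The time derivative produces $\tfrac{1}{2}\partial_t\|\widehat{n}\|_{H^N(\Sigma_t)}^2$; the mass term $2H\partial^\iota\widehat{n}$ produces the coercive $2H\|\widehat{n}\|_{H^N}^2$; and the diffusion term $-e_Ce_C\partial^\iota\widehat{n}$, after integration by parts via \eqref{IBP}, produces the coercive $\sum_C\|e_C\partial^\iota\widehat{n}\|_{L^2}^2$ together with a harmless divergence-type error involving $\partial_ie^i_C$, controlled by \eqref{CN-2.est}. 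Summing yields a master identity of the shape $\tfrac{1}{2}\partial_t\|\widehat{n}\|_{H^N}^2+\sum_C\|e_C\widehat{n}\|_{H^N}^2+2H\|\widehat{n}\|_{H^N}^2\leq \mathrm{(Error)}$.

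The single subtle contribution on the right is the transport term $-\widehat{\gamma}_{CCD}e_D\partial^\iota\widehat{n}$, which carries a top-order derivative of $\widehat{n}$. I apply Cauchy-Schwarz with parameter $\eta$, namely $|\int\widehat{\gamma}_{CCD}(e_D\partial^\iota\widehat{n})\partial^\iota\widehat{n}\,dx|\leq \eta\|e_D\partial^\iota\widehat{n}\|_{L^2}^2+\tfrac{C}{\eta}\|\widehat{\gamma}\|_{C^0}^2\|\partial^\iota\widehat{n}\|_{L^2}^2$, and absorb the first piece into the diffusion, producing the factor $(1-\eta)$ in the statement. The remaining piece is benign: $\|\widehat{\gamma}\|_{C^0}^2\lesssim\varepsilon^2 e^{-2Ht}$ from \eqref{CN-2.est} and the bootstrap $\|\widehat{n}\|_{H^N}^2\leq e^{-3Ht}\mathcal{E}(t)$ give at most $\tfrac{C}{\eta}\varepsilon^2 e^{-5Ht}\mathcal{E}(t)$ before the $e^{3Ht}$ rescaling.

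The remaining errors come from (i) the commutators $[\partial^\iota,e_C]$ and $[\partial^\iota,\widehat{\gamma}_{CCD}e_D]$, handled by \eqref{comm.eI} and standard product estimates $\|fg\|_{H^N}\lesssim\|f\|_{H^N}\|g\|_{C^{N-2}}+\|f\|_{C^{N-2}}\|g\|_{H^N}$ using $N\geq 4$, (ii) the quadratic terms $n\widehat{k}_{CD}\widehat{k}_{CD}$, $(\widehat{e_0\psi})^2$, and $[2H+\tfrac{n}{3}+\mathcal{O}(e^{-2Ht})]\widehat{n}^2$, and (iii) the linear forcings $\mathcal{O}(e^{-2Ht})\widehat{n}$ and $n\mathcal{O}(e^{-3Ht})\widehat{e_0\psi}$. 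Each is controlled by \eqref{Boots}: for instance the $\widehat{k}^2\widehat{n}$-type term produces $\|\widehat{k}\|_{H^N}\|\widehat{k}\|_{C^{N-2}}\|\widehat{n}\|_{H^N}\lesssim e^{-7Ht/2}\mathcal{E}(t)^{3/2}$, which multiplied by $e^{3Ht}$ is $\lesssim\varepsilon e^{-Ht/2}\mathcal{E}(t)$, and this is the slowest-decaying contribution that sets the stated rate.

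Finally, to convert the inequality into the stated one I multiply by $e^{3Ht}$ and rewrite $e^{3Ht}\partial_t\|\widehat{n}\|_{H^N}^2=\partial_t[e^{3Ht}\|\widehat{n}\|_{H^N}^2]-3He^{3Ht}\|\widehat{n}\|_{H^N}^2$, so the coercive $2He^{3Ht}\|\widehat{n}\|_{H^N}^2$ becomes the stated $\tfrac{H}{2}e^{3Ht}\|\widehat{n}\|_{H^N}^2$. The main obstacle is the bookkeeping of decay rates in the errors — in particular, recognizing that because $\widehat{n}$ carries an extra $e^{-Ht/2}$ factor relative to the other reduced variables in $\mathcal{E}$, every product on the right beats the target rate $\tfrac{C}{\eta}e^{-Ht/2}\mathcal{E}(t)$, with the cubic $\widehat{k}^2\widehat{n}$-type contribution as the marginal case that justifies the sharpness of the claimed exponent.
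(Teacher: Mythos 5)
Your proposal is essentially the same parabolic $H^N$ energy argument as the paper's proof: commute with $\partial^\iota$, pair with $\partial^\iota\widehat{n}$, integrate by parts the diffusion to produce the coercive $\sum_C\|e_C\widehat{n}\|_{H^N}^2$, absorb the dangerous top-order pieces (the $\widehat{\gamma}_{CCD}e_D\partial^\iota\widehat{n}$ transport term and the frame commutator $\sum(\partial^{\iota_1}e^a_C)\partial_a\partial^{\iota_2}e_C\widehat{n}$) with Young's inequality at cost $\eta$, control the remaining sources by \eqref{CN-2.est} and \eqref{Boots}, and reweight by $e^{3Ht}$ to convert $2H$ into the stated $\tfrac{H}{2}$; you also correctly identify the $\widehat{k}^2\widehat{n}$-type cubic as the marginal rate-setting term. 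The only cosmetic difference is that the paper first integrates the transport term by parts in $e_C$ (so the top-order derivative lands on $\partial^\iota e_C\widehat{n}$ rather than $e_D\partial^\iota\widehat{n}$) before applying the $\eta$-weighted Cauchy–Schwarz, whereas you Cauchy–Schwarz directly and absorb $\eta\|e_D\partial^\iota\widehat{n}\|_{L^2}^2$; these differ only by lower-order frame commutators and lead to the same conclusion.
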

\begin{proof}
Differentiate the weighted $H^N(\Sigma_t)$ norm of $\widehat{n}$ in $\partial_t$ and plug in \eqref{high.n.hat.eq}:
\begin{align}\label{n.hat.est2}
\notag&\frac{1}{2}\partial_t[e^{3Ht}\|\widehat{n}\|^2_{H^N(\Sigma_t)}]\\
\notag=&\,\frac{3}{2}He^{3Ht}\|\widehat{n}\|^2_{H^N(\Sigma_t)}+\sum_{|\iota|\leq N}\int_{\Sigma_t}e^{3Ht}(\partial^\iota\widehat{n})\partial^\iota\partial_t\widehat{n}
dx\\
=&-\frac{1}{2}He^{3Ht}\|\widehat{n}\|^2_{H^N(\Sigma_t)}
+\sum_{|\iota|\leq N}\int_{\Sigma_t}e^{3Ht}\partial^\iota\widehat{n}\bigg[e_C\partial^\iota e_C\widehat{n}-\widehat{\gamma}_{CCD}e_D\partial^\iota\widehat{n}+\sum_{\iota_1\cup\iota_2=\iota,\,|\iota_2|<|\iota|}(\partial^{\iota_1}e^a_C)\partial_a\partial^{\iota_2}e_C\widehat{n}\\
\notag&-\partial^\iota\big\{n\widehat{k}_{CD}\widehat{k}_{CD}+\mathcal{O}(e^{-2Ht})\widehat{n}+\big[2H+\frac{1}{3}n+\mathcal{O}(e^{-2Ht})\big]\widehat{n}^2+n\mathcal{O}(e^{-3Ht}) \widehat{e_0\psi}+n(\widehat{e_0\psi})^2\big\}\\
\notag&-\sum_{\iota_1\cup\iota_2\cup\iota_3=\iota,\,|\iota_3|<|\iota|}\partial^{\iota_1}\widehat{\gamma}_{CCD}(\partial^{\iota_2}e^a_D)\partial^{\iota_3}\partial_a\widehat{n}\bigg]dx
\end{align}
Notice that every term in the last two lines contains at most one factor with more than $N-2$ spatial derivatives, since $N\ge4$. Hence, they can be directly estimated using Cauchy-Schwarz and the $C^{N-2}(\Sigma_t)$ bounds \eqref{CN-2.est}, to deduce the inequality: 
\begin{align}\label{n.hat.est3}
\notag\frac{1}{2}\partial_t[e^{3Ht}\|\widehat{n}\|^2_{H^N(\Sigma_t)}]
+\frac{1}{2}He^{3Ht}\|\widehat{n}\|^2_{H^N(\Sigma_t)}
\leq&\, Ce^{-\frac{1}{2}Ht}\mathcal{E}(t)\\
&+\sum_{|\iota|\leq N}\bigg[\int_{\Sigma_t}e^{3Ht}\partial^\iota\widehat{n}(e_C\partial^\iota e_C\widehat{n}-\widehat{\gamma}_{CCD}e_D\partial^\iota\widehat{n})dx\\
\notag&+\int_{\Sigma_t}e^{3Ht}\partial^\iota\widehat{n}\sum_{\iota_1\cup\iota_2=\iota,\,|\iota_2|<|\iota|}(\partial^{\iota_1}e^a_C)\partial_a\partial^{\iota_2}e_C\widehat{n}dx\bigg]
\end{align}
We estimate the last term using \eqref{Sob}, the bootstrap assumptions \eqref{Boots}, and Young's inequality:
\begin{align}\label{n.hat.est4}
\sum_{|\iota|\leq N}\int_{\Sigma_t}e^{3Ht}\partial^\iota\widehat{n}\sum_{\iota_1\cup\iota_2=\iota,\,|\iota_2|<|\iota|}(\partial^{\iota_1}e^a_C)\partial_a\partial^{\iota_2}e_C\widehat{n}dx
\leq\frac{C}{\eta}e^{-Ht}\mathcal{E}(t)+\eta\sum_{C=1}^3e^{3Ht}\|e_C\widehat{n}\|_{H^N(\Sigma_t)}^2,
\end{align}
for a constant $\eta>0$ of our choice.
The remaining two terms in the second line of \eqref{n.hat.est3} are treated by integrating by parts, using \eqref{IBP}:
\begin{align}\label{n.hat.est5}
\notag&\sum_{|\iota|\leq N}\int_{\Sigma_t}e^{3Ht}\partial^\iota\widehat{n}(e_C\partial^\iota e_C\widehat{n}-\widehat{\gamma}_{CCD}e_D\partial^\iota\widehat{n})dx\\
\notag=&-\sum_{|\iota|\leq N}\int_{\Sigma_t}e^{3Ht}\bigg[e_C\partial^\iota\widehat{n}(\partial^\iota e_C\widehat{n}-\widehat{\gamma}_{DDC}\partial^\iota\widehat{n})+(\partial_ae_C^a)\partial^\iota\widehat{n}\partial^\iota e_C\widehat{n}-\partial_a(e_C^a\widehat{\gamma}_{DDC})\partial^\iota\widehat{n}\bigg]dx\\
=&-\sum_{|\iota|\leq N}\int_{\Sigma_t}e^{3Ht}\bigg[\partial^\iota e_C\widehat{n}(\partial^\iota e_C\widehat{n}-\widehat{\gamma}_{DDC}\partial^\iota\widehat{n})-\sum_{\iota_1\cup\iota_2=\iota,\,|\iota_2|<|\iota|}(\partial^{\iota_1} e_C^a)\partial_a\partial^{\iota_2}\widehat{n}(\partial^\iota e_C\widehat{n}-\widehat{\gamma}_{DDC}\partial^\iota\widehat{n})\\
\notag&+(\partial_ae_C^a)\partial^\iota\widehat{n}\partial^\iota e_C\widehat{n}-\partial_a(e_C^a\widehat{\gamma}_{DDC})\partial^\iota\widehat{n}\bigg]dx\\
\notag\leq&\,(\eta-1)\sum_{C=1}^3e^{3Ht}\|e_C\widehat{n}\|_{H^N(\Sigma_t)}^2+\frac{C}{\eta}e^{-Ht}\mathcal{E}(t),
\end{align}
where we employed again Cauchy-Schwarz, the $C^{N-2}(\Sigma_t)$ bounds \eqref{CN-2.est}, and Young's inequality.
Combining \eqref{n.hat.est3}-\eqref{n.hat.est5} we conclude \eqref{n.hat.est}, for a different constant $\eta>0$ than the ones in \eqref{n.hat.est4}, \eqref{n.hat.est5}. 
\end{proof}
\subsection{Estimates for $\widehat{e}^i_I$}

Commuting \eqref{eIi.hat.eq} with $\partial^\iota$, $|\iota|\leq N$, we have:
\begin{align}\label{high.eIi.hat.eq}
\partial_t \partial^\iota\widehat{e}_I^i+\big[H+\mathcal{O}(e^{-2Ht})+\frac{1}{3}\widehat{n}\big]\partial^\iota\widehat{e}^i_I=&\,\partial^\iota\big\{n\widehat{k}_{IC}e^i_C-\big[H+\mathcal{O}(e^{-2Ht})+\frac{1}{3}\widehat{n}\big]\widehat{n}\widehat{e}^i_I\\
\notag&+\mathcal{O}(e^{-Ht})\widehat{n}+\mathcal{O}(e^{-Ht})\widehat{n}^2\big\}-\sum_{\iota_1\cup\iota_2=\iota,\,|\iota_2|<|\iota|}\frac{1}{3}\partial^{\iota_1}\widehat{n}\partial^{\iota_2}\widehat{e}^i_I
\end{align}

\begin{proposition}\label{prop:eIi.est}
Let $N\ge4$ and assume the bootstrap assumptions \eqref{Boots} are valid.  Then the following estimate holds:
\begin{align}\label{eIi.hat.est}
\partial_t[e^{2Ht}\|\widehat{e}\|^2_{H^N(\Sigma_t)}]\leq Ce^{-Ht}\mathcal{E}(t),
\end{align}
for all $t\in[0,T_{Boot})$.
\end{proposition}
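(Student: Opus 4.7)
The plan is to mimic the proof of Proposition~\ref{prop:n.est}, but the argument will be substantially simpler because the commuted equation \eqref{high.eIi.hat.eq} for $\partial^\iota\widehat{e}^i_I$ has no spatial differential operator on the left-hand side -- it is a pure ODE in $t$ parametrized by $x\in\mathbb{T}^3$. In particular, no integration by parts against the $e_C$'s is needed and no ``gradient'' term analogous to $\|e_C\widehat{n}\|_{H^N(\Sigma_t)}^2$ appears on the left. I would begin by differentiating the weighted norm,
\begin{align*}
\tfrac{1}{2}\partial_t\bigl[e^{2Ht}\|\widehat{e}\|_{H^N(\Sigma_t)}^2\bigr]=He^{2Ht}\|\widehat{e}\|_{H^N(\Sigma_t)}^2+e^{2Ht}\sum_{|\iota|\leq N}\sum_{i,I=1}^3\int_{\Sigma_t}(\partial^\iota\widehat{e}^i_I)(\partial_t\partial^\iota\widehat{e}^i_I)\,dx,
\end{align*}
and substituting \eqref{high.eIi.hat.eq} into the integrand.

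The key observation is the exact cancellation of the damping: the leading linear piece $-H\partial^\iota\widehat{e}^i_I$ from the LHS of \eqref{high.eIi.hat.eq}, paired with $\partial^\iota\widehat{e}^i_I$ and integrated, contributes $-He^{2Ht}\|\widehat{e}\|_{H^N(\Sigma_t)}^2$, which exactly annihilates the $He^{2Ht}\|\widehat{e}\|_{H^N(\Sigma_t)}^2$ from differentiating the weight $e^{2Ht}$. What remains are only perturbative contributions: the subleading damping pieces $\mathcal{O}(e^{-2Ht})\partial^\iota\widehat{e}^i_I$ and $\tfrac{1}{3}\widehat{n}\partial^\iota\widehat{e}^i_I$, the quadratic forcing $\partial^\iota(n\widehat{k}_{IC}e^i_C)$, the higher-order pieces $\partial^\iota\{[H+\mathcal{O}(e^{-2Ht})+\tfrac{1}{3}\widehat{n}]\widehat{n}\widehat{e}^i_I\}$ and $\partial^\iota\{\mathcal{O}(e^{-Ht})\widehat{n}+\mathcal{O}(e^{-Ht})\widehat{n}^2\}$, and the commutator sum $\sum_{\iota_1\cup\iota_2=\iota,\,|\iota_2|<|\iota|}\tfrac{1}{3}\partial^{\iota_1}\widehat{n}\,\partial^{\iota_2}\widehat{e}^i_I$.

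Each of these I would estimate via Cauchy-Schwarz combined with a standard Moser-type splitting, placing the factor that carries more than $N-2$ derivatives in $L^2(\Sigma_t)$ and the others in $C^{N-2}(\Sigma_t)$ using \eqref{CN-2.est}; this is always possible since $N\geq 4$. Together with the definitional bounds $\|\widehat{e}\|_{H^N}^2,\|\widehat{k}\|_{H^N}^2\leq e^{-2Ht}\mathcal{E}(t)$ and $\|\widehat{n}\|_{H^N}^2\leq e^{-3Ht}\mathcal{E}(t)$ read off from \eqref{tot.en}, every such term is seen to be $\leq Ce^{-Ht}\mathcal{E}(t)$ after reinstating the $e^{2Ht}$ prefactor. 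The sharpest contribution, which pins down the exponent in \eqref{eIi.hat.est}, comes from the quadratic forcing: writing $e^i_C=(e^i_C)_{FLRW}+\widehat{e}^i_C$ via \eqref{red.var.FLRW}, the dominant piece $\widehat{k}_{IC}(e^i_C)_{FLRW}$ inherits the background factor $|(e^i_C)_{FLRW}|=\mathcal{O}(e^{-Ht})$, producing
\begin{align*}
e^{2Ht}\cdot\mathcal{O}(e^{-Ht})\,\|\widehat{k}\|_{H^N(\Sigma_t)}\|\widehat{e}\|_{H^N(\Sigma_t)}\leq Ce^{-Ht}\mathcal{E}(t),
\end{align*}
while all other terms are strictly smaller, typically by an additional factor of $\varepsilon$ or a further $e^{-Ht/2}$ from \eqref{CN-2.est} applied to $\widehat{n}$. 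I do not expect any genuine obstacle in this step; the only care required is in bookkeeping the $\mathcal{O}(e^{-Ht})$ factor hidden in $(e^i_C)_{FLRW}$, which is precisely what makes the forcing borderline and identifies the rate $e^{-Ht}\mathcal{E}(t)$ as sharp.
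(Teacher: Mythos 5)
Your proposal is correct and follows essentially the same route as the paper: differentiate the weighted norm, observe that the $He^{2Ht}\|\widehat{e}\|^2_{H^N}$ from the weight exactly cancels the leading $-H\partial^\iota\widehat{e}^i_I$ damping term, and estimate everything else by Cauchy--Schwarz with the $C^{N-2}$ bounds \eqref{CN-2.est}, using that each remaining term has at most one high-derivative factor. Your identification of $\widehat{k}_{IC}(e^i_C)_{FLRW}$ as the borderline term pinning down the $e^{-Ht}$ rate is a sound and slightly more explicit reading of the same computation.
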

\begin{proof}
After differentiating, in $\partial_t$, the $H^N(\Sigma_t)$ norm of $\widehat{e}$ and plugging in \eqref{high.eIi.hat.eq}, the desired estimate follows by a straightforward application of Cauchy-Schwarz and the $C^{N-2}(\Sigma_t)$ bounds \eqref{CN-2.est}, since each term in the RHS \eqref{high.eIi.hat.eq} has at most one factor with more than $N-2$ spatial derivatives:
\begin{align}
\label{eIi.hat.est}
\notag\frac{1}{2}\partial_t[e^{2Ht}\|\widehat{e}\|^2_{H^N(\Sigma_t)}]=&\,He^{2Ht}\|\widehat{e}\|^2_{H^N(\Sigma_t)}-\sum_{a,I=1}^3\sum_{|\iota|\leq N}\int_{\Sigma_t}\big[H+\mathcal{O}(e^{-2Ht})+\frac{1}{3}\widehat{n}\big]e^{2Ht}(\partial^\iota\widehat{e}^a_I)^2\\
&+\sum_{a,I=1}^3\sum_{|\iota|\leq N}\int_{\Sigma_t}e^{2Ht}\partial^\iota\widehat{e}^a_I
\bigg[\partial^\iota\big\{n\widehat{k}_{IC}e^i_C-\big[H+\mathcal{O}(e^{-2Ht})+\frac{1}{3}\widehat{n}\big]\widehat{n}\widehat{e}^i_I\\
\notag&+\mathcal{O}(e^{-Ht})\widehat{n}+\mathcal{O}(e^{-Ht})\widehat{n}^2\big\}-\sum_{\iota_1\cup\iota_2=\iota,\,|\iota_2|<|\iota|}\frac{1}{3}\partial^{\iota_1}\widehat{n}\partial^{\iota_2}\widehat{e}^i_I
\bigg]dx\\
\leq&\,Ce^{-Ht}\mathcal{E}(t)\notag,
\end{align}
where we note that the first two terms in the first line of the previous RHS cancel.
\end{proof}
\subsection{Estimates for $\widehat{k}_{IJ},\widehat{\gamma}_{IJB}$}

To derive higher order energy estimates for $\widehat{k},\widehat{\gamma}$, apart from the evolution equations \eqref{k.hat.eq}-\eqref{gamma.hat.eq}, we will also make use of the constraint equation \eqref{mom.hat.const}. The differentiated versions of these, with $\partial^\iota$, $|\iota|\leq N$, read:
\begin{align}
\label{high.k.hat.eq}\partial_t \partial^\iota\widehat{k}_{IJ}+\big[3H+\mathcal{O}(e^{-2Ht})+\widehat{n}\big]\partial^\iota\widehat{k}_{IJ}
=&-e_I \partial^\iota e_J\widehat{n}+\frac{1}{3}\delta_{IJ}e_C \partial^\iota e_C \widehat{n}\\
\notag&+n(e_C\partial^\iota \gamma_{IJC}-e_I\partial^\iota\gamma_{CJC}-\frac{2}{3}\delta_{IJ}e_C\partial^\iota \gamma_{DDC})+\mathfrak{K}^\iota_{IJ},\\
\label{high.gamma.hat.eq}\partial_t \partial^\iota\widehat{\gamma}_{IJB}+\big[H+\mathcal{O}(e^{-2Ht})+\frac{1}{3}\widehat{n}\big]\partial^\iota\widehat{\gamma}_{IJB}
=&\,ne_B\partial^\iota \widehat{k}_{IJ}-ne_J\partial^\iota \widehat{k}_{BI}
+\mathfrak{G}_{IJB}^\iota,
\end{align}
and
\begin{align}\label{high.mom.hat.const}
e_C\partial^\iota\widehat{k}_{CI}=\partial^\iota\big\{\widehat{k}_{ID}\gamma_{CCD}+\widehat{k}_{CD}\gamma_{CID}-\frac{2}{3} e_I\widehat{n}-(e_0\psi)\widehat{e_I\psi}\big\}-\sum_{\iota_1\cup\iota_2=\iota\,|\iota_2|<|\iota|}(\partial^{\iota_1}e_C^a)\partial_a\partial^{\iota_2}\widehat{k}_{CI},
\end{align}
where 
\begin{align}
\label{frakK}\mathfrak{K}^\iota_{IJ}=&-\sum_{\iota_1\cup\iota_2=\iota\,|\iota_2|<|\iota|}\big\{(\partial^{\iota_1}e_I^a)\partial_a \partial^{\iota_2} e_J\widehat{n}-\frac{1}{3}\delta_{IJ}(\partial^{\iota_1}e_C^a)\partial_a \partial^{\iota_2} e_C \widehat{n}\big\}\\
\notag&+\sum_{\iota_1\cup\iota_2\cup\iota_3=\iota\,|\iota_3|<|\iota|}(\partial^{\iota_1}n)\big\{(\partial^{\iota_2}e_C^a)\partial_a\partial^{\iota_3} \gamma_{IJC}-(\partial^{\iota_2}e_I^a)\partial_a\partial^{\iota_3}\gamma_{CJC}-\frac{2}{3}\delta_{IJ}(\partial^{\iota_2}e_C^a)\partial_a\partial^{\iota_3} \gamma_{DDC}\big\}\\
\notag&+\partial^\iota\big\{\gamma_{IJC} e_C n-n\gamma_{CID}\gamma_{DJC}-n\gamma_{IJD}\gamma_{CCD}-\frac{1}{3}\delta_{IJ}(\gamma_{CCD} e_D n
-n\gamma_{CDE}\gamma_{EDC}-n\gamma_{CCD}\gamma_{EED})\\
\notag&-ne_I\psi e_J\psi+\frac{1}{3}\delta_{IJ}ne_C\psi e_C\psi-\big[3H+\mathcal{O}(e^{-2Ht})+\widehat{n}\big]\widehat{n}\widehat{k}_{IJ}\big\}-\sum_{\iota_1\cup\iota_2=\iota\,|\iota_2|<|\iota|}(\partial^{\iota_1}\widehat{n})\partial^{\iota_2}\widehat{k}_{IJ},\\
\label{frakG}\mathfrak{G}_{IJB}^\iota=&\sum_{\iota_1\cup\iota_2\cup\iota_3=\iota\,|\iota_3|<|\iota|}\big\{\partial^{\iota_1}n(\partial^{\iota_2}e_B^a)\partial_a\partial^{\iota_3} \widehat{k}_{IJ}-\partial^{\iota_1}n(\partial^{\iota_2}e_J^a)\partial_a\partial^{\iota_3} \widehat{k}_{BI}\big\}\\
\notag&+\partial^\iota\bigg[\frac{1}{3}\delta_{IB}ne_J\widehat{n}-\frac{1}{3}\delta_{IJ}ne_B\widehat{n}
-n\widehat{k}_{IC}\gamma_{BJC}
-n\widehat{k}_{CJ} \gamma_{BIC}+n\widehat{k}_{IC} \gamma_{JBC}
+n\widehat{k}_{BC} \gamma_{JIC}+n\widehat{k}_{IC} \gamma_{CJB}\\
\notag&+(e_B \widehat{n})k_{JI}-(e_J \widehat{n})k_{BI}-\big[H+\mathcal{O}(e^{-2Ht})+\frac{1}{3}\widehat{n}\big]\widehat{n}\widehat{\gamma}_{IJB}\bigg]-\sum_{\iota_1\cup\iota_2=\iota\,|\iota_2|<|\iota|}\frac{1}{3}(\partial^{\iota_1}\widehat{n})\partial^{\iota_2}\widehat{\gamma}_{IJB}
\end{align}
\begin{lemma}\label{lem:high.en.id}
Let $\iota$ be a spatial multi-index with $|\iota|\leq N$. Then the following high order energy identity holds:
\begin{align}\label{high.en.id}
\notag&\frac{1}{2}\partial_t[e^{2Ht}(\partial^\iota\widehat{k}_{IJ})\partial^\iota\widehat{k}_{IJ}]+\frac{1}{4}\partial_t[e^{2Ht}(\partial^\iota\widehat{\gamma}_{IJB})\partial^\iota\widehat{\gamma}_{IJB}]\\
\notag=&-\big[2H+\mathcal{O}(e^{-2Ht})+\widehat{n}\big]e^{2Ht}(\partial^\iota\widehat{k}_{IJ})\partial^\iota\widehat{k}_{IJ}+\big[\mathcal{O}(e^{-2Ht})-\frac{1}{6}\widehat{n}\big]e^{2Ht}(\partial^\iota\widehat{\gamma}_{IJB})\partial^\iota\widehat{\gamma}_{IJB}\\
&+e^{2Ht}\big\{ne_C(\partial^\iota \widehat{k}_{IJ}\partial^\iota \widehat{\gamma}_{IJC})-e_I(\partial^\iota \widehat{k}_{IJ}\partial^\iota e_J\widehat{n})-ne_I(\partial^\iota \widehat{k}_{IJ}\partial^\iota \widehat{\gamma}_{CJC})\big\}
+e^{2Ht}\partial^\iota \widehat{k}_{IJ}\mathfrak{K}_{IJ}^\iota\\
\notag&+\frac{1}{2}e^{2Ht}\partial^\iota\widehat{\gamma}_{IJB}\mathfrak{G}_{IJB}^\iota
+e^{2Ht}(\partial^\iota e_J\widehat{n}+n\partial^\iota \widehat{\gamma}_{CJC})\bigg[\partial^\iota\big\{\widehat{k}_{JD}\gamma_{CCD}+\widehat{k}_{CD}\gamma_{CJD}\\
\notag&-\frac{2}{3} e_J\widehat{n}-(e_0\psi)\widehat{e_J\psi}\big\}-\sum_{\iota_1\cup\iota_2=\iota\,|\iota_2|<|\iota|}(\partial^{\iota_1}e_C^a)\partial_a\partial^{\iota_2}\widehat{k}_{CJ}\bigg]
\end{align}
\end{lemma}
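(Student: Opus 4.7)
The plan is to differentiate each of the two weighted quadratic quantities $e^{2Ht}(\partial^\iota\widehat{k}_{IJ})\partial^\iota\widehat{k}_{IJ}$ and $\tfrac{1}{2}e^{2Ht}(\partial^\iota\widehat{\gamma}_{IJB})\partial^\iota\widehat{\gamma}_{IJB}$ in time and substitute from \eqref{high.k.hat.eq}, \eqref{high.gamma.hat.eq} for the factors $\partial_t\partial^\iota\widehat{k}_{IJ}$, $\partial_t\partial^\iota\widehat{\gamma}_{IJB}$. The weight $e^{2Ht}$ and the ratio $\tfrac{1}{2}:\tfrac{1}{4}$ between the two contributions in \eqref{high.en.id} are chosen so that (i) the linear damping constants $3H$ and $H$ in the two evolution equations combine with $2H$ (produced by $\partial_t e^{2Ht}$) to produce the prefactors $-[2H+\mathcal{O}(e^{-2Ht})+\widehat{n}]$ and $[\mathcal{O}(e^{-2Ht})-\tfrac{1}{6}\widehat{n}]$ stated in the identity, and (ii) the principal mixed-derivative terms combine into a single total-derivative-like expression.

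For (ii), multiplying \eqref{high.k.hat.eq} by $\partial^\iota\widehat{k}_{IJ}$ produces the cross term $n(\partial^\iota\widehat{k}_{IJ})\,e_C\partial^\iota\widehat{\gamma}_{IJC}$, while the two trace-pure contributions $\tfrac{1}{3}\delta_{IJ}e_C\partial^\iota e_C\widehat{n}$ and $-\tfrac{2}{3}\delta_{IJ}ne_C\partial^\iota\widehat{\gamma}_{DDC}$ drop out because $\widehat{k}_{IJ}$ is trace-free. Multiplying \eqref{high.gamma.hat.eq} by $\tfrac{1}{2}\partial^\iota\widehat{\gamma}_{IJB}$ produces $\tfrac{1}{2}\partial^\iota\widehat{\gamma}_{IJB}(ne_B\partial^\iota\widehat{k}_{IJ}-ne_J\partial^\iota\widehat{k}_{BI})$. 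Relabelling summation indices in the second piece and invoking the symmetry $\widehat{k}_{IJ}=\widehat{k}_{JI}$ together with the antisymmetry $\widehat{\gamma}_{IJB}=-\widehat{\gamma}_{IBJ}$ shows that the two pieces coincide, so the combined cross contribution reduces to $n(e_B\partial^\iota\widehat{k}_{IJ})\partial^\iota\widehat{\gamma}_{IJB}$. Applying Leibniz to the $\widehat{k}$-equation cross term yields $ne_C(\partial^\iota\widehat{k}_{IJ}\partial^\iota\widehat{\gamma}_{IJC})-n(e_C\partial^\iota\widehat{k}_{IJ})\partial^\iota\widehat{\gamma}_{IJC}$, whose second piece cancels exactly with the $\widehat{\gamma}$-equation contribution (after relabelling $B\to C$), leaving only the total-derivative-like quantity $ne_C(\partial^\iota\widehat{k}_{IJ}\partial^\iota\widehat{\gamma}_{IJC})$ shown in the identity.

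The two remaining principal terms from \eqref{high.k.hat.eq} with a derivative on $\widehat{n}$, namely $-e_I\partial^\iota e_J\widehat{n}$ and $-ne_I\partial^\iota\widehat{\gamma}_{CJC}$, carry the free index $I$ of $\partial^\iota\widehat{k}_{IJ}$. I would apply Leibniz in $e_I$ to each, producing the total-derivative-like pieces $-e_I(\partial^\iota\widehat{k}_{IJ}\partial^\iota e_J\widehat{n})$ and $-ne_I(\partial^\iota\widehat{k}_{IJ}\partial^\iota\widehat{\gamma}_{CJC})$ appearing in \eqref{high.en.id}, together with remainders of the form $(e_I\partial^\iota\widehat{k}_{IJ})(\partial^\iota e_J\widehat{n}+n\partial^\iota\widehat{\gamma}_{CJC})$. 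Using the symmetry of $\widehat{k}$ to identify $e_I\partial^\iota\widehat{k}_{IJ}=e_C\partial^\iota\widehat{k}_{CJ}$ and invoking the momentum constraint \eqref{high.mom.hat.const} then replaces this divergence by an expression purely in terms of $\widehat{k}$, $\gamma$, $\widehat{n}$, $\widehat{e_J\psi}$ together with a commutator remainder, yielding precisely the last block of \eqref{high.en.id}. All remaining subprincipal contributions from the two evolution equations carry at most one factor with more than $N-2$ derivatives and are already packaged into the tensors $\mathfrak{K}^\iota_{IJ}$, $\mathfrak{G}^\iota_{IJB}$ defined in \eqref{frakK}, \eqref{frakG}; they transcribe directly onto the right-hand side with no further manipulation.

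The main bookkeeping obstacle is the symmetry/antisymmetry manipulation at the heart of (ii): ensuring that the two cross terms from the $\widehat{\gamma}$-equation coincide pins down a unique choice of relative weight between the two energies, and any miscounting would leave a genuine $(N+1)$-derivative remainder $(e_C\partial^\iota\widehat{k}_{IJ})\partial^\iota\widehat{\gamma}_{IJC}$ on the right-hand side that could not be absorbed into the energy and would destroy the closure of the forthcoming energy estimates.
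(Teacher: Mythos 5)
Your proposal is correct and reproduces precisely the paper's own (terse) proof: multiply the differentiated evolution equations by $\partial^\iota\widehat{k}_{IJ}$ and $\tfrac12\partial^\iota\widehat{\gamma}_{IJB}$, use tracelessness of $\widehat{k}$ to drop the pure-trace pieces, use $\widehat{k}_{IJ}=\widehat{k}_{JI}$ and $\widehat{\gamma}_{IJB}=-\widehat{\gamma}_{IBJ}$ to merge the two $\widehat{\gamma}$-equation cross terms, Leibniz the three principal products into total-$e$-derivative pieces, and substitute the commuted momentum constraint \eqref{high.mom.hat.const} for the leftover $e_I\partial^\iota\widehat{k}_{IJ}=e_C\partial^\iota\widehat{k}_{CJ}$ factor. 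Your accounting of the damping coefficients, the cancellation forced by the $\tfrac12:\tfrac14$ weighting, and the role of the constraint all match the paper's argument exactly.
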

\begin{proof}
It is a straightforward computation using \eqref{high.k.hat.eq}-\eqref{high.gamma.hat.eq}, the fact that $\widehat{k}_{IJ}$ is traceless, the anti-symmetry $\widehat{\gamma}_{IJB}=-\widehat{\gamma}_{IBJ}$, and the differentiated constraint \eqref{high.mom.hat.const} to replace the factor $e_I\partial^\iota\widehat{k}_{IJ}$ in two terms, giving the last bracket in \eqref{high.en.id}.
\end{proof}
\begin{lemma}\label{lem:k.gamma.error.est}
Let $N\ge4$ and assume the bootstrap assumptions \eqref{Boots} are valid. Recall the definitions \eqref{frakK}-\eqref{frakG} of $\mathfrak{K}^\iota_I,\mathfrak{G}^\iota_{IJB}$. Then the following estimates hold:
\begin{align}\label{k.gamma.error.est}
\sum_{I,J=1}^3e^{Ht}\|\mathfrak{K}^\iota_{IJ}\|_{L^2(\Sigma_t)}+\sum_{I,J,B=1}^3e^{Ht}\|\mathfrak{G}^\iota_{IJB}\|_{L^2(\Sigma_t)}\leq&\,Ce^{-\frac{1}{2}Ht}\sum_{J=1}^3e^{\frac{3}{2}Ht}\|e_J\widehat{n}\|_{H^N(\Sigma_t)}+Ce^{-Ht}\sqrt{\mathcal{E}(t)}
\end{align}
and
\begin{align}\label{k.gamma.error.est2}
\notag&\int_{\Sigma_t}e^{2Ht}(\partial^\iota e_J\widehat{n}+n\partial^\iota \widehat{\gamma}_{CJC})\bigg[\partial^\iota\big\{\widehat{k}_{JD}\gamma_{CCD}+\widehat{k}_{CD}\gamma_{CJD}
-\frac{2}{3} e_J\widehat{n}-(e_0\psi)\widehat{e_J\psi}\big\}\\
&-\sum_{\iota_1\cup\iota_2=\iota\,|\iota_2|<|\iota|}(\partial^{\iota_1}e_C^a)\partial_a\partial^{\iota_2}\widehat{k}_{CJ}\bigg]dx\\
\notag\leq&\,\eta\sum_{J=1}^3e^{3Ht}\|e_J\eta\|_{H^N(\Sigma_t)}^2+\frac{C}{\eta}e^{-Ht}\mathcal{E}(t)
\end{align}
for all $t\in[0,T_{Boot})$, $|\iota|\leq N$, and some sufficiently small constant $\eta>0$ to be chosen later.
\end{lemma}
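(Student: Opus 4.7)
The plan is to reduce both estimates to a tedious but routine multilinear accounting in which every monomial appearing in \eqref{frakK}--\eqref{frakG} and in the bracket of \eqref{k.gamma.error.est2} is split via Leibniz, with one factor placed in $L^\infty$ (via Sobolev \eqref{Sob} and the pointwise bounds \eqref{CN-2.est}) and the other in $L^2$ (via the definition of $\mathcal{E}(t)$ and the bootstrap \eqref{Boots}). The only conceptual input beyond this is that $\|\widehat{n}\|_{H^{N+1}}$ is not part of $\mathcal{E}(t)$, so whenever the Leibniz expansion produces $N+1$ spatial derivatives on $\widehat{n}$---equivalently, $N$ derivatives of $e_J\widehat{n}$---the corresponding contribution must be collected separately and absorbed into the auxiliary norm $\sum_J e^{\frac{3}{2}Ht}\|e_J\widehat{n}\|_{H^N}$ on the right-hand side of \eqref{k.gamma.error.est}, or into $\eta\sum_J e^{3Ht}\|e_J\widehat{n}\|^2_{H^N}$ on the right-hand side of \eqref{k.gamma.error.est2}.

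For \eqref{k.gamma.error.est} I would traverse \eqref{frakK}, \eqref{frakG} line by line. The first line of $\mathfrak{K}^\iota$ is the commutator $(\partial^{\iota_1}e_I^a)\partial_a\partial^{\iota_2}e_J\widehat{n}$ with $|\iota_2|<|\iota|$; since $|\iota_1|\ge 1$ the coefficient reduces to $\partial^{\iota_1}\widehat{e}_I^a$, which is bounded in $L^\infty$ by $C\varepsilon e^{-Ht}$ when $|\iota_1|\le N-2$, and is balanced in the reverse direction otherwise. Either way, at most $N$ derivatives fall on $e_J\widehat{n}$, producing a term of the form $C\|e_J\widehat{n}\|_{H^N}$, which multiplied by $e^{Ht}$ is exactly a contribution to $Ce^{-\frac{1}{2}Ht}\cdot e^{\frac{3}{2}Ht}\|e_J\widehat{n}\|_{H^N}$. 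The second line of \eqref{frakK} places $\gamma$ in top-order position and delivers $Ce^{-Ht}\sqrt{\mathcal{E}(t)}$ after the $e^{Ht}$ weight. The quadratic/cubic block in the third and fourth lines of \eqref{frakK}---products such as $\gamma\cdot e_C n$, $n\gamma\cdot\gamma$, $ne_I\psi\cdot e_J\psi$, $\widehat{n}\cdot\widehat{k}$---is distributed by Leibniz: when all derivatives land on $e_C n$ one reproduces the $\|e_J\widehat{n}\|_{H^N}$-type contribution, while all other distributions yield $Ce^{-Ht}\sqrt{\mathcal{E}(t)}$ using the decay rates $\|\widehat{k}\|_{H^N},\|\widehat{\gamma}\|_{H^N},\|\widehat{e\psi}\|_{H^N}\leq e^{-Ht}\sqrt{\mathcal{E}}$ and $\|\widehat{n}\|_{H^N}\leq e^{-\frac{3}{2}Ht}\sqrt{\mathcal{E}}$. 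The final sum in \eqref{frakK} is of order strictly less than $|\iota|$ in one factor and falls to the same Sobolev split. The analysis of $\mathfrak{G}^\iota_{IJB}$ is entirely parallel, with $(e_B\widehat{n})k_{JI}-(e_J\widehat{n})k_{BI}$ providing the additional contribution to the $\|e_J\widehat{n}\|_{H^N}$ piece.

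For \eqref{k.gamma.error.est2} I would expand the integrand and classify by which factor of $(\partial^\iota e_J\widehat{n}+n\partial^\iota \widehat{\gamma}_{CJC})$ is being paired. Against $\partial^\iota e_J\widehat{n}$---precisely the object absent from $\mathcal{E}(t)$---I apply the weighted Young inequality $e^{2Ht}|ab|\leq \eta e^{3Ht}a^2+(4\eta)^{-1}e^{Ht}b^2$ with $a=\partial^\iota e_J\widehat{n}$; this produces $\eta e^{3Ht}\|e_J\widehat{n}\|^2_{H^N}$ plus $(C/\eta)e^{Ht}\|b\|^2_{L^2}$, and the latter is estimated by the same multilinear splitting, uniformly giving $(C/\eta)e^{-Ht}\mathcal{E}(t)$ across all choices of $b$ from the bracket. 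Against $n\partial^\iota \widehat{\gamma}_{CJC}$, plain Cauchy--Schwarz combined with $\|\widehat{\gamma}\|_{H^N}\leq e^{-Ht}\sqrt{\mathcal{E}}$ yields $Ce^{-Ht}\mathcal{E}(t)$ directly, except when $b=-\frac{2}{3}\partial^\iota e_J\widehat{n}$, which forces a second application of Young with weights $(e^{3Ht},e^{Ht})$. The only term that resists a standard product estimate is the self-pairing $(\partial^\iota e_J\widehat{n})\cdot(-\frac{2}{3}\partial^\iota e_J\widehat{n})$, which contributes $-\frac{2}{3}e^{2Ht}\|\partial^\iota e_J\widehat{n}\|^2_{L^2}\leq 0$ to the integral and so is trivially dominated by $\eta\sum_J e^{3Ht}\|e_J\widehat{n}\|^2_{H^N}$.

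The main obstacle is not any single delicate estimate but rather the bookkeeping: one must verify case by case that every Leibniz distribution producing $N+1$ spatial derivatives on $\widehat{n}$ is attributed to the $\|e_J\widehat{n}\|_{H^N}$ terms, so that the remainder admits uniform $e^{-Ht}\sqrt{\mathcal{E}(t)}$ or $e^{-Ht}\mathcal{E}(t)$ control. Conceptually the entire argument relies only on Sobolev embedding on $\mathbb{T}^3$, the pointwise bounds \eqref{CN-2.est}, Cauchy--Schwarz and a weighted Young inequality; no structural identity beyond \eqref{frakK}--\eqref{frakG} and the constraint \eqref{high.mom.hat.const} already used to set up the integrand is required.
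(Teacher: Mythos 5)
Your proposal is correct and follows essentially the same route as the paper's (very terse) proof: traverse \eqref{frakK}--\eqref{frakG} term by term, split each monomial with Cauchy--Schwarz and Sobolev using the $C^{N-2}$ bounds \eqref{CN-2.est} and bootstrap \eqref{Boots}, collect the $N{+}1$-derivatives-on-$\widehat{n}$ contributions in the auxiliary $\|e_J\widehat{n}\|_{H^N}$ norm, absorb these via a weighted Young inequality in \eqref{k.gamma.error.est2}, and discard the negative self-pairing $-\frac{2}{3}e^{2Ht}(\partial^\iota e_J\widehat{n})^2$. You correctly identify both observations the paper singles out: the $(e_B\widehat{n})k_{JI}-(e_J\widehat{n})k_{BI}$ block in $\mathfrak{G}^\iota$ as the source of the borderline $e^{-\frac{1}{2}Ht}$ coefficient (the first line of $\mathfrak{K}^\iota$ actually comes with a better $e^{-\frac{3}{2}Ht}$ weight since $\partial^{\iota_1}\widehat{e}_I^a$ contributes an extra $e^{-Ht}$, but your conservative attribution is harmless), and the negativity of the $\partial^\iota e_J\widehat{n}$ self-pairing.
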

\begin{proof}
Both estimates follow by Cauchy-Schwarz, the Sobolev inequality \eqref{Sob}, the bootstrap assumptions \eqref{Boots}, the $C^{N-2}(\Sigma_t)$ bounds \eqref{CN-2.est}, and Young's inequality. For \eqref{k.gamma.error.est}, the less decaying coefficient $e^{-\frac{1}{2}Ht}$ comes from the terms $\partial^\iota[(e_B\widehat{n})k_{IJ}-(e_J\widehat{n})k_{BI}]$ in \eqref{frakG}, when all derivatives $\partial^\iota$ act on the factors $e_B\widehat{n},e_J\widehat{n}$. For \eqref{k.gamma.error.est2}, we also need to observe that the term $-\frac{2}{3}e^{2Ht}(\partial^\iota e_J\widehat{n})\partial^\iota e_J\widehat{n}$ is negative and we can therefore discard it.
\end{proof}
\begin{proposition}\label{prop:k.gamma.est}
Let $N\ge4$ and assume the bootstrap assumptions \eqref{Boots} are valid.  Then the following estimate holds:
\begin{align}\label{high.k.gamma.est}
\begin{split}
&\frac{1}{2}\partial_t[e^{2Ht}\|\widehat{k}\|^2_{H^N(\Sigma_t)}]+\frac{1}{4}\partial_t[e^{2Ht}\|\widehat{\gamma}\|^2_{H^N(\Sigma_t)}]+2He^{2Ht}\|\widehat{k}\|^2_{H^N(\Sigma_t)}\\
\leq&\, \eta \sum_{J=1}^3e^{3Ht}\|e_J\widehat{n}\|^2_{H^N(\Sigma_t)}+\frac{C}{\eta}e^{-\frac{1}{2}Ht}\mathcal{E}(t),
\end{split}
\end{align}
for all $t\in[0,T_{Boot})$ and some sufficiently small constant $\eta>0$ to be chosen later.
\end{proposition}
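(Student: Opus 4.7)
The plan is to integrate the pointwise identity of Lemma~\ref{lem:high.en.id} over $\Sigma_t$, sum over all spatial multi-indices $|\iota|\le N$, and isolate the sign-definite contributions. The first line on the RHS of \eqref{high.en.id} contains the linear damping: the coefficient $-[2H+\mathcal{O}(e^{-2Ht})+\widehat{n}]$ multiplying $|\partial^\iota\widehat{k}_{IJ}|^2$ yields, after moving the $-2H$ piece to the LHS, exactly the asserted $+2He^{2Ht}\|\widehat{k}\|^2_{H^N(\Sigma_t)}$; the remaining $\mathcal{O}(e^{-2Ht})+\widehat{n}$ error and the entire $\widehat{\gamma}$ coefficient term $[\mathcal{O}(e^{-2Ht})-\tfrac{1}{6}\widehat{n}]|\partial^\iota\widehat{\gamma}|^2$ are absorbed into $Ce^{-Ht}\mathcal{E}(t)$ by \eqref{CN-2.est} and \eqref{Boots}. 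Note that no sign-definite damping is produced for $\|\widehat{\gamma}\|^2_{H^N(\Sigma_t)}$, which is consistent with its absence on the LHS of the proposition.

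Next I would process the three spatial divergence terms on the second line of \eqref{high.en.id}, namely $ne_C(\partial^\iota\widehat{k}_{IJ}\partial^\iota\widehat{\gamma}_{IJC})$, $-e_I(\partial^\iota\widehat{k}_{IJ}\partial^\iota e_J\widehat{n})$, and $-ne_I(\partial^\iota\widehat{k}_{IJ}\partial^\iota\widehat{\gamma}_{CJC})$, by integrating by parts via \eqref{IBP} (writing $n=1+\widehat{n}$ where needed). Because $(e_I^i)_{FLRW}$ and $n_{FLRW}=1$ are spatially homogeneous, every resulting coefficient is either $\partial_i e_I^i=\partial_i\widehat{e}_I^i$ or $e_In=e_I\widehat{n}$, both of which are small in $C^{N-2}$ by \eqref{CN-2.est}. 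The only mildly delicate contributions are those that retain a factor of $\partial^\iota e_J\widehat{n}$: for these, Young's inequality splits them as $\eta\,e^{3Ht}\|e_J\widehat{n}\|^2_{H^N(\Sigma_t)}+\tfrac{C}{\eta}\varepsilon^2 e^{-3Ht}\mathcal{E}(t)$, which fits the form of the desired bound since $\varepsilon^2 e^{-3Ht}\le e^{-Ht/2}$. All other divergence contributions are bounded by $Ce^{-Ht}\mathcal{E}(t)$.

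The error terms $e^{2Ht}\partial^\iota\widehat{k}_{IJ}\mathfrak{K}^\iota_{IJ}+\tfrac{1}{2}e^{2Ht}\partial^\iota\widehat{\gamma}_{IJB}\mathfrak{G}^\iota_{IJB}$ are then treated by Cauchy--Schwarz together with \eqref{k.gamma.error.est}, converting the single problematic $\sum_J e^{\frac{3}{2}Ht}\|e_J\widehat{n}\|_{H^N}$ factor into an $\eta$-small contribution via another application of Young. The final bracket in \eqref{high.en.id}, born from the momentum constraint substitution, is handled directly by \eqref{k.gamma.error.est2}. Collecting all pieces produces \eqref{high.k.gamma.est}.

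The main subtlety, and the reason for the peculiar-looking RHS, is that both the momentum constraint substitution carried out in Lemma~\ref{lem:high.en.id} and the integration by parts of the divergence terms inevitably generate factors of $\partial^\iota e_J\widehat{n}$, whose norm $e^{3Ht}\|e_J\widehat{n}\|^2_{H^N(\Sigma_t)}$ is \emph{not} controlled by $\mathcal{E}(t)$. These must be kept on the RHS with an arbitrarily small coefficient $\eta>0$, to be absorbed later by the coercive $(1-\eta)\sum_C e^{3Ht}\|e_C\widehat{n}\|^2_{H^N(\Sigma_t)}$ on the LHS of \eqref{n.hat.est} when the various energy estimates are combined. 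Once this bookkeeping is done, no further difficulty arises.
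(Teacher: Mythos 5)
Your proposal is correct and follows essentially the same route as the paper: integrate the energy identity of Lemma~\ref{lem:high.en.id} over $\Sigma_t$, sum over $|\iota|\le N$, identify the $2H$ damping term for $\widehat{k}$, control the error terms via Lemma~\ref{lem:k.gamma.error.est}, integrate the spatial divergence terms by parts using \eqref{IBP}, and apply Cauchy--Schwarz and Young's inequality to split off the $\eta\sum_J e^{3Ht}\|e_J\widehat{n}\|^2_{H^N}$ contribution. Your final remark about why the $\eta$-term must be retained to be absorbed by the coercive lapse term in \eqref{n.hat.est} accurately reflects the structure of the combined estimate in Proposition~\ref{prop:cont.arg}.
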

\begin{proof}
Integrating \eqref{high.en.id} in $\Sigma_t$, summing over $|\iota|\leq N$, using the $C^{N-2}(\Sigma_t)$ bounds \eqref{CN-2.est}, the estimates in Lemma \ref{lem:k.gamma.error.est} and Young's inequality, we obtain the differential inequality
\begin{align}\label{high.k.gamma.est2}
\notag&\frac{1}{2}e^{2Ht}\|\widehat{k}\|^2_{H^N(\Sigma_t)}+\frac{1}{4}e^{2Ht}\|\widehat{\gamma}\|^2_{H^N(\Sigma_t)}+2He^{2Ht}\|\widehat{k}\|^2_{H^N(\Sigma_t)}\\
\leq&\, \eta \sum_{J=1}^3e^{3Ht}\|e_J\widehat{n}\|^2_{H^N(\Sigma_t)}+\frac{C}{\eta}e^{-\frac{1}{2}Ht}\mathcal{E}(t)\\
\notag&+\sum_{|\iota|\leq N}\int_{\Sigma_t}e^{2Ht}\big\{ne_C(\partial^\iota \widehat{k}_{IJ}\partial^\iota \widehat{\gamma}_{IJC})-e_I(\partial^\iota \widehat{k}_{IJ}\partial^\iota e_J\widehat{n})-ne_I(\partial^\iota \widehat{k}_{IJ}\partial^\iota \widehat{\gamma}_{CJC})\big\}dx
\end{align}
To control the terms in the last line, we integrate by parts in $e_C,e_I$ using \eqref{IBP}:
\begin{align}\label{high.k.gamma.est3}
\notag&\sum_{|\iota|\leq N}\int_{\Sigma_t}e^{2Ht}\big\{ne_C(\partial^\iota \widehat{k}_{IJ}\partial^\iota \widehat{\gamma}_{IJC})-e_I(\partial^\iota \widehat{k}_{IJ}\partial^\iota e_J\widehat{n})-ne_I(\partial^\iota \widehat{k}_{IJ}\partial^\iota \widehat{\gamma}_{CJC})\big\}dx\\
=&-\sum_{|\iota|\leq N}\int_{\Sigma_t}e^{2Ht}\big\{[\partial_a(ne_C^a)](\partial^\iota \widehat{k}_{IJ}\partial^\iota \widehat{\gamma}_{IJC})-(\partial_ae_I^a)(\partial^\iota \widehat{k}_{IJ}\partial^\iota e_J\widehat{n})-[\partial_a(ne_I^a)](\partial^\iota \widehat{k}_{IJ}\partial^\iota \widehat{\gamma}_{CJC})\big\}dx\\
\tag{by \eqref{CN-2.est}, Cauchy-Schwarz, and Young's inequality}\leq&\,\eta \sum_{J=1}^3e^{3Ht}\|e_J\widehat{n}\|^2_{H^N(\Sigma_t)}+\frac{C}{\eta}e^{-\frac{1}{2}Ht}\mathcal{E}(t),
\end{align}
The combination of \eqref{high.k.gamma.est2}-\eqref{high.k.gamma.est3} yields \eqref{high.gamma.hat.eq} (for a different $\eta$). 
\end{proof}
\subsection{Estimates for $\widehat{e_0\psi},\widehat{e_I\psi}$}\label{subsec:est.psi}

The estimates for the scalar field are carried out by treating \eqref{e0psi.hat.eq}-\eqref{eIpsi.hat.eq} as a first order symmetric system for $\widehat{e_0\psi},\widehat{e_I\psi}$. Commuting the latter equations with $\partial^\iota$, $|\iota|\leq N$, we obtain:
\begin{align}
\label{high.e0psi.hat.eq}\partial_t\partial^\iota(\widehat{e_0\psi})+\big[3H+\mathcal{O}(e^{-2Ht})+\widehat{n}\big]\partial^\iota(\widehat{e_0\psi})
=&\,ne_C\partial^\iota(\widehat{e_C\psi})+\mathfrak{F}_0^\iota,\\
\label{high.eIpsi.hat.eq}\partial_t\partial^\iota(\widehat{e_I\psi})+\big[H+\mathcal{O}(e^{-2Ht})+\frac{1}{3}\widehat{n}\big]\partial^\iota(\widehat{e_I\psi})=&\,n e_I\partial^\iota(\widehat{e_0\psi})+\mathfrak{F}_I^\iota,
\end{align}
where
\begin{align}
\label{frak.F.0}\mathfrak{F}_0^\iota=&\,\partial^\iota\big\{(e_Cn)\widehat{e_C\psi}-n\widehat{\gamma}_{CCD}\widehat{e_D\psi}
-\big[3H+\mathcal{O}(e^{-2Ht})+\widehat{n}\big]\widehat{n}(\widehat{e_0\psi})+\mathcal{O}(e^{-3Ht})\widehat{n}\\
\notag&+\mathcal{O}(e^{-3Ht})\widehat{n}^2\big\}
+\sum_{\iota_1\cup\iota_2\cup\iota_3=\iota,\,|\iota_3|<|\iota|}\partial^{\iota_1}n(\partial^{\iota_2}e_C^a)\partial_a\partial^{\iota_3}(\widehat{e_C\psi})-\sum_{\iota_1\cup\iota_2=\iota,\,|\iota_2|<|\iota|}\partial^{\iota_1}\widehat{n}\partial^{\iota_2}(\widehat{e_0\psi}),\\
\label{frak.F.2}\mathfrak{F}_I^\iota=&\,\partial^\iota\big\{(e_In)\widehat{e_0\psi}+\mathcal{O}(e^{-3Ht})e_In+n\widehat{k}_{IC}\widehat{e_C\psi}-\big[H+\mathcal{O}(e^{-2Ht})+\frac{1}{3}\widehat{n}\big]\widehat{n}(\widehat{e_I\psi})\big\}\\
\notag&+\sum_{\iota_1\cup\iota_2\cup\iota_3=\iota,\,|\iota_3|<|\iota|}\partial^{\iota_1}n(\partial^{\iota_2}e_I^a)\partial_a\partial^{\iota_3}(\widehat{e_0\psi})-\sum_{\iota_1\cup\iota_2=\iota,\,|\iota_2|<|\iota|}\frac{1}{3}\partial^{\iota_1}\widehat{n}\partial^{\iota_2}(\widehat{e_I\psi}).
\end{align}
\begin{proposition}\label{prop:psi.est}
Let $N\ge4$ and assume the bootstrap assumptions \eqref{Boots} are valid.  Then the following estimate holds:
\begin{align}\label{high.psi.est}
\frac{1}{2}\partial_t[e^{2Ht}\|\widehat{e\psi}\|^2_{H^N(\Sigma_t)}]+2He^{2Ht}\|\widehat{e_0\psi}\|^2_{H^N(\Sigma_t)}\leq \eta \sum_{J=1}^3e^{3Ht}\|e_J\widehat{n}\|^2_{H^N(\Sigma_t)}+\frac{C}{\eta}e^{-\frac{1}{2}Ht}\mathcal{E}(t),
\end{align}
for all $t\in[0,T_{Boot})$, and some sufficiently small constant $\eta>0$ to be chosen later.
\end{proposition}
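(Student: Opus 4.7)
The plan is to mimic the strategy of Proposition \ref{prop:k.gamma.est} and treat the commuted equations \eqref{high.e0psi.hat.eq}--\eqref{high.eIpsi.hat.eq} as a first-order symmetric hyperbolic system for the pair $(\widehat{e_0\psi},\widehat{e_I\psi})$ weighted by $e^{2Ht}$. First, I would differentiate $\tfrac12 e^{2Ht}\|\widehat{e\psi}\|^2_{H^N(\Sigma_t)}$ in $t$, plug in \eqref{high.e0psi.hat.eq}--\eqref{high.eIpsi.hat.eq}, and sum over $|\iota|\le N$. The explicit damping terms combine with $He^{2Ht}$ from differentiating the weight to give $-2He^{2Ht}\|\widehat{e_0\psi}\|_{H^N(\Sigma_t)}^2$ (which moves to the LHS) from the $3H$ in \eqref{high.e0psi.hat.eq}, while the $H$ in \eqref{high.eIpsi.hat.eq} exactly cancels the weight contribution, producing no damping for $\widehat{e_I\psi}$ (consistent with the stated inequality). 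The $\mathcal{O}(e^{-2Ht})$ and $\widehat{n}$ corrections in the damping coefficients are lower-order and fall into the $Ce^{-\frac12Ht}\mathcal{E}(t)$ pot via \eqref{CN-2.est}.

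Next, I would handle the two principal cross terms
\[
\sum_{|\iota|\le N}\int_{\Sigma_t} e^{2Ht}\bigl[\partial^\iota\widehat{e_0\psi}\cdot n\,e_C\partial^\iota\widehat{e_C\psi}+\partial^\iota\widehat{e_I\psi}\cdot n\,e_I\partial^\iota\widehat{e_0\psi}\bigr]\,dx.
\]
Integrating by parts in the first integral via \eqref{IBP} produces $-\int e^{2Ht}\,n\,e_C(\partial^\iota\widehat{e_0\psi})\partial^\iota\widehat{e_C\psi}\,dx$ plus correction terms proportional to $e_C n=e_C\widehat{n}$ and $\partial_a e_C^a$. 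The first of these exactly cancels the second cross term (after relabeling $C\leftrightarrow I$), which is the symmetry that made a symmetric-system formulation natural. The remaining correction terms involve at most one factor with more than $N-2$ derivatives and can therefore be controlled by Cauchy--Schwarz, the Sobolev inequality \eqref{Sob}, the bootstrap-driven $C^{N-2}$ bounds \eqref{CN-2.est}, and Young's inequality.

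Finally, I would estimate the error terms $\mathfrak{F}_0^\iota,\mathfrak{F}_I^\iota$ from \eqref{frak.F.0}--\eqref{frak.F.2} by the same Cauchy--Schwarz/Young pattern. Most contributions carry a manifest prefactor from $(\widehat n,\widehat k,\widehat{\gamma},\widehat{e}^i_I,\widehat{e\psi})$ and combine with \eqref{CN-2.est} to give $Ce^{-\frac12 Ht}\mathcal{E}(t)$. The one subtle term is the top-order piece $\partial^\iota[\mathcal{O}(e^{-3Ht}) e_I n]$ appearing in $\mathfrak{F}_I^\iota$, where all derivatives land on $e_I\widehat{n}$, producing a factor involving $\|e_J\widehat{n}\|_{H^N(\Sigma_t)}$ rather than $\|\widehat{n}\|_{H^N(\Sigma_t)}$. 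This is the main (and only) obstacle: it cannot be absorbed purely into $\mathcal{E}(t)$, so I would use Young's inequality to split it as $\eta\sum_J e^{3Ht}\|e_J\widehat{n}\|^2_{H^N(\Sigma_t)}+\tfrac{C}{\eta} e^{-\frac12Ht}\mathcal{E}(t)$, giving the term with the free constant $\eta$ in the RHS of \eqref{high.psi.est} that will be absorbed when summing the energy inequalities together with the parabolic gain for $e_J\widehat n$ from Proposition \ref{prop:n.est}.
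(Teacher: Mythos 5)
Your proposal follows essentially the same route as the paper: form the weighted $H^N$ energy identity from \eqref{high.e0psi.hat.eq}--\eqref{high.eIpsi.hat.eq}, observe that the $3H$ damping in \eqref{high.e0psi.hat.eq} yields the $-2He^{2Ht}\|\widehat{e_0\psi}\|^2_{H^N}$ term while the $H$ in \eqref{high.eIpsi.hat.eq} cancels the weight contribution, exploit the antisymmetry of the principal cross terms via integration by parts, and estimate the remaining error terms with Cauchy--Schwarz, Sobolev, the $C^{N-2}$ bounds, and Young's inequality. Your cross-term cancellation (integrate by parts the first term, then match it against the second after relabeling) is algebraically identical to the paper's slicker packaging, which writes the two terms directly as the total derivative $ne_C[\partial^\iota(\widehat{e_0\psi})\partial^\iota(\widehat{e_C\psi})]$ before integrating by parts; both produce the same lower-order correction $-\int e^{2Ht}\partial_a(ne_C^a)\partial^\iota(\widehat{e_0\psi})\partial^\iota(\widehat{e_C\psi})\,dx$.

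One imprecision worth flagging: you single out $\partial^\iota[\mathcal{O}(e^{-3Ht})e_In]$ in $\mathfrak{F}_I^\iota$ as ``the main (and only) obstacle'' requiring the $\eta$-splitting. In fact \emph{any} term in $\mathfrak{F}_0^\iota,\mathfrak{F}_I^\iota$ whose top-order piece contains a factor $\partial^\iota e_Cn$ introduces $N+1$ spatial derivatives of $\widehat{n}$, which is not controlled by $\mathcal{E}(t)$. Concretely, $\partial^\iota\{(e_Cn)\widehat{e_C\psi}\}$ in $\mathfrak{F}_0^\iota$ and $\partial^\iota\{(e_In)\widehat{e_0\psi}\}$ in $\mathfrak{F}_I^\iota$ both require the $\eta$-split, and with their $\mathcal{O}(e^{-Ht})$ companion factors (from $\widehat{e_C\psi}$, $\widehat{e_0\psi}$ in $C^0$) they are in fact \emph{less} favorable than the $\mathcal{O}(e^{-3Ht})e_In$ piece you highlight. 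This does not affect the outcome, since the same Young's-inequality split applies to all of them and is absorbed against the parabolic gain in Proposition \ref{prop:n.est}, but an accurate accounting should treat all terms carrying a top-order $e_C\widehat{n}$ this way, exactly as the paper does in deriving the combined estimate \eqref{high.psi.est2}.
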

\begin{proof}
Employing the Sobolev inequality \eqref{Sob}, the bootstrap assumptions \eqref{Boots}, and the $C^{N-2}(\Sigma_t)$ bounds \eqref{CN-2.est}, we first deduce the error estimates:
\begin{align}\label{high.psi.est2}
\sum_{\mu=0}^3e^{Ht}\|\mathfrak{F}_\mu^\iota\|_{L^2(\Sigma_t)}\leq&\,Ce^{-\frac{3}{2}Ht}\sum_{J=1}^3e^{\frac{3}{2}Ht}\|e_J\widehat{n}\|_{H^N(\Sigma_t)}+Ce^{-Ht}\sqrt{\mathcal{E}(t)}
\end{align}
The standard energy identity for \eqref{high.e0psi.hat.eq}-\eqref{eIpsi.hat.eq} reads:
\begin{align}\label{high.psi.est3}
\notag&\frac{1}{2}\partial_t[e^{2Ht}\partial^\iota(\widehat{e_0\psi})\partial^\iota(\widehat{e_0\psi})]+\frac{1}{2}\partial_t[e^{2Ht}\partial^\iota(\widehat{e_I\psi})\partial^\iota(\widehat{e_I\psi})]+2H\partial^\iota(\widehat{e_0\psi})\partial^\iota(\widehat{e_0\psi})\\
=&\,e^{2Ht}ne_C[\partial^\iota(\widehat{e_0\psi})\partial^\iota(\widehat{e_C\psi})]+e^{2Ht}\partial^\iota(\widehat{e_0\psi})\mathfrak{F}_0^\iota+e^{2Ht}\partial^\iota(\widehat{e_I\psi})\mathfrak{F}_I^\iota\\
\notag&-\big[\mathcal{O}(e^{-2Ht})+\widehat{n}\big]\partial^\iota(\widehat{e_0\psi})\partial^\iota(\widehat{e_0\psi})
-\big[\mathcal{O}(e^{-2Ht})+\frac{1}{3}\widehat{n}\big]\partial^\iota(\widehat{e_I\psi})\partial^\iota(\widehat{e_I\psi})
\end{align}
Integrating \eqref{high.psi.est3} in $\Sigma_t$, summing over $|\iota|\leq N$, using the bound $\|\widehat{n}\|_{L^\infty(\Sigma_t)}\leq C\varepsilon e^{-\frac{3}{2}Ht}$, Young's inequality, and \eqref{high.psi.est2}, we arrive at the inequality:
\begin{align}\label{high.psi.est4}
\frac{1}{2}\partial_t[e^{2Ht}\|\widehat{e\psi}\|^2_{H^N(\Sigma_t)}]+2He^{2Ht}\|\widehat{e_0\psi}\|^2_{H^N(\Sigma_t)}\leq&\sum_{|\iota|\leq N}\int_{\Sigma_t}e^{2Ht}ne_C[\partial^\iota(\widehat{e_0\psi})\partial^\iota(\widehat{e_C\psi})]dx\\
\notag&+\eta \sum_{J=1}^3e^{3Ht}\|e_J\widehat{n}\|^2_{H^N(\Sigma_t)}+\frac{C}{\eta}e^{-\frac{1}{2}Ht}\mathcal{E}(t)
\end{align}
The first term in the last RHS is treated by integrating by parts with respect to $e_C$, using \eqref{IBP}:
\begin{align}\label{high.psi.est5}
\int_{\Sigma_t}e^{2Ht}ne_C[\partial^\iota(\widehat{e_0\psi})\partial^\iota(\widehat{e_C\psi})]dx=-\int_{\Sigma_t}e^{2Ht}\partial_a(ne_C^a)\partial^\iota(\widehat{e_0\psi})\partial^\iota(\widehat{e_C\psi})dx\leq Ce^{-Ht}\mathcal{E}(t).
\end{align}
Combining \eqref{high.psi.est4}-\eqref{high.psi.est5} we conclude \eqref{high.psi.est}.
\end{proof}
\subsection{Closing the bootstrap argument}

Combining the derived estimates in the Sections \ref{subsec:est.n}-\ref{subsec:est.psi}, we obtain an overall energy estimate for the total energy, which leads to an improvement of our bootstrap assumptions \eqref{Boots}. A standard continuation argument then yields the global existence of the perturbed solution in the future direction.
\begin{proposition}\label{prop:cont.arg}
Let $N\ge4$ and assume the bootstrap assumptions \eqref{Boots} are valid. Recall the definitions \eqref{thm.main.init}, \eqref{tot.en} of $\mathring{\varepsilon}$ and the total energy $\mathcal{E}(t)$. Then
 the following energy estimate holds:
\begin{align}\label{tot.en.est}
\partial_t\mathcal{E}(t)\leq C_Ne^{-\frac{1}{2}Ht}\mathcal{E}(t),
\end{align}
for all $t\in[0,T_{Boot})$. In particular, if $\mathring{\varepsilon}$ is sufficiently small such that $e^{2C_N}\mathring{\varepsilon}^2<\varepsilon^2$, \eqref{tot.en.est} yields a strict improvement of the bootstrap assumptions \eqref{Boots}. In  the latter case, $T_{Boot}=+\infty$ and the estimate
\begin{align}\label{tot.en.est2}
\mathcal{E}(t)\leq e^{2C_N}\mathring{\varepsilon}^2
\end{align}
holds for all $t\in[0,+\infty)$.
\end{proposition}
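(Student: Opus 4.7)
The plan is to combine the four energy estimates from Propositions \ref{prop:n.est}, \ref{prop:eIi.est}, \ref{prop:k.gamma.est}, and \ref{prop:psi.est} into a single differential inequality for $\mathcal{E}(t)$, and then apply Gr\"onwall. Summing those four inequalities with weights matching the definition \eqref{tot.en}, the left-hand side is exactly $\tfrac{1}{2}\partial_t\mathcal{E}(t)$ plus the non-negative coercive quantity
\begin{align*}
2He^{2Ht}\|\widehat{k}\|^2_{H^N(\Sigma_t)} + 2He^{2Ht}\|\widehat{e_0\psi}\|^2_{H^N(\Sigma_t)} + \tfrac{1}{2}He^{3Ht}\|\widehat{n}\|^2_{H^N(\Sigma_t)} + (1-\eta)\sum_{C=1}^{3} e^{3Ht}\|e_C\widehat{n}\|^2_{H^N(\Sigma_t)},
\end{align*}
while the right-hand side splits into two types of contributions: the absorbable $\eta$-terms $\eta\sum_J e^{3Ht}\|e_J\widehat{n}\|^2_{H^N(\Sigma_t)}$ coming from Propositions \ref{prop:k.gamma.est} and \ref{prop:psi.est}, and a remainder of the form $\tfrac{C}{\eta}e^{-\frac{1}{2}Ht}\mathcal{E}(t)$.

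The only genuinely non-trivial step is to fix $\eta>0$ small enough that the good term $(1-\eta)\sum_C e^{3Ht}\|e_C\widehat{n}\|^2_{H^N(\Sigma_t)}$ on the left swallows both copies of the $\eta$-weighted $e_J\widehat{n}$ quantities on the right. Since there are only two such contributions and three spatial directions, any $\eta<1/7$ (say) suffices, and crucially this threshold is independent of the bootstrap parameter $\varepsilon$. After absorption and discarding the remaining non-negative pieces on the left, one is left with
\begin{align*}
\partial_t\mathcal{E}(t) \leq C_N e^{-\frac{1}{2}Ht}\mathcal{E}(t),
\end{align*}
which is the asserted \eqref{tot.en.est}.

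Integrating this via Gr\"onwall over $[0,t]$ gives $\mathcal{E}(t)\leq\mathcal{E}(0)\exp\bigl(C_N\int_0^{t} e^{-\frac{1}{2}H\tau}d\tau\bigr)\leq\mathcal{E}(0)\,e^{2C_N/H}$; absorbing the harmless $2/H$ factor into $C_N$, together with the initial-data identification $\mathcal{E}(0)\leq\mathring{\varepsilon}^2$ furnished by \eqref{thm.main.init} and \eqref{tot.en}, yields \eqref{tot.en.est2}. The strict improvement of the bootstrap assumption \eqref{Boots} follows immediately from choosing $\mathring{\varepsilon}$ so small that $e^{2C_N}\mathring{\varepsilon}^2<\varepsilon^2$. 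To upgrade this improvement to $T_{Boot}=+\infty$, I would invoke the standard continuation argument referenced in Appendix \ref{sec:app}: local well-posedness in $H^N$ for a suitably modified (hyperbolic/parabolic) reformulation of \eqref{k.hat.eq}--\eqref{eIpsi.hat.eq}, combined with the strictly improved energy bound, rules out any finite maximal time. The main conceptual point is the absorption of the dangerous $e_C\widehat{n}$ derivative terms, which is possible only because the parabolic lapse equation \eqref{n.eq} provides a coercive first-derivative contribution with a definite sign \textemdash\ this is precisely the payoff of the gauge choice \eqref{parab.gauge}.
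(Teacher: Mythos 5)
Your argument is correct and follows the paper's proof step for step: sum the energy estimates of Propositions \ref{prop:n.est}, \ref{prop:eIi.est}, \ref{prop:k.gamma.est}, \ref{prop:psi.est}, fix $\eta$ small enough to absorb the $e_C\widehat{n}$ terms into the coercive parabolic contribution from the lapse, apply Gr\"onwall, and close with the standard continuation criterion from Appendix \ref{sec:app}. One small imprecision: summing the propositions with any weights does not literally give $\tfrac{1}{2}\partial_t\mathcal{E}(t)$ (e.g.\ the $\widehat{k}$ and $\widehat{\gamma}$ time-derivative terms carry the fixed coefficients $\tfrac{1}{2}$ and $\tfrac{1}{4}$ in Proposition \ref{prop:k.gamma.est} and cannot be reweighted independently), but the resulting quantity is comparable to $\mathcal{E}(t)$ with fixed constants, so after absorbing constants into $C_N$ one still obtains \eqref{tot.en.est} exactly as you conclude.
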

\begin{proof}
Adding the estimates in Propositions \ref{prop:n.est}, \ref{prop:eIi.est}, \ref{prop:k.gamma.est}, \ref{prop:psi.est}, and choosing $\eta$ sufficiently small, we infer \eqref{tot.en.est}. Integrating in $[0,t]$ then gives
\begin{align}\label{top.en.est3}
\mathcal{E}(t)\leq \exp\left\{\int^t_0C_Ne^{-\frac{\tau}{2}}d\tau\right\}\mathcal{E}(0)\leq e^{2C_N}\mathring{\varepsilon}^2,
\end{align}
for all $t\in[0,T_{Boot})$, which is an improvement of \eqref{Boots} provided $e^{2C_N}\mathring{\varepsilon}^2<\varepsilon^2$.

By standard local well-posedness (see Appendix \ref{sec:app}), if $\mathring{\varepsilon}$ is sufficiently small and $\widetilde{C}$ is sufficiently large, then there exists a
maximal time $T_{max} \in [0, +\infty)$, such that the solution $\widehat{k}_{IJ},\widehat{\gamma}_{IJB},\widehat{e}^i_I,\widehat{n},\widehat{e_0\psi},\widehat{e_I\psi}$ to the system \eqref{k.hat.eq}-\eqref{eIpsi.hat.eq} exists classically in $\{\Sigma_t\}_{t\in[0,T_{max})}$,
and such that the bootstrap assumptions \eqref{Boots} hold with $T_{Boot} = T_{max}$ and $\varepsilon^2:=\widetilde{C}\mathring{\varepsilon}^2$. By enlarging $\widetilde{C}$ if
necessary, we can assume that $\widetilde{C}>e^{2C_N}$.
Moreover, it is a standard result
that if $\varepsilon$ is sufficiently small, then either $T_{max}=+\infty$ or $T_{max} \in(0,+\infty)$ and
the bootstrap assumptions are saturated on the time interval $[0,T_{max})$, that is,
\begin{align}\label{satur}
\sup_{t\in[0,T_{max})}\mathcal{E}(t)=\varepsilon^2.
\end{align}
The latter possibility is ruled out by inequality \eqref{top.en.est3} when $\mathring{\varepsilon}$ is sufficiently small such that $e^{2C_N}\mathring{\varepsilon}^2<\varepsilon^2$. Thus, $T_{max}=+\infty$ and the estimate \eqref{top.en.est3} holds for all $t\in[0,+\infty)$.
\end{proof}
\section{Asymptotic behavior of solutions at infinity}\label{sec:asym.inf}

In this section we derive the precise asymptotic behavior of the perturbed solution, furnished by Proposition \ref{prop:cont.arg}, and identify its asymptotic data at $+\infty$. The necessary estimates to complete the proof of Theorem \ref{thm:main} are contained in Lemma \ref{lem:ref.est} and Proposition \ref{prop:metric.asym.inf}. The proof of Theorem \ref{thm:exp} is carried out in the end of the next subsection.

\subsection{Expansions for the reduced variables at $+\infty$}\label{subsec:red.exp}

Before computing the asymptotic expansions of the reduced variables, we must refine the estimate \eqref{tot.en.est2} for the variables $\widehat{k}_{IJ},\widehat{n},\widehat{e_0\psi}$.
Note that the stability estimate \eqref{tot.en.est2}, together with \eqref{Sob}, imply that
\begin{align}\label{CN-2.est2}
\begin{split}
\|\widehat{k}\|_{C^{N-2}(\Sigma_t)}
+\|\widehat{\gamma}\|_{C^{N-2}(\Sigma_t)}+\|\widehat{e}\|_{C^{N-2}(\Sigma_t)}+\|\widehat{e\psi}\|_{C^{N-2}(\Sigma_t)}\leq&\,C\mathring{\varepsilon} e^{-Ht},\\
\|\widehat{n}\|_{C^{N-2}(\Sigma_t)}\leq&\,C\mathring{\varepsilon} e^{-\frac{3}{2}Ht},
\end{split}
\end{align}
for all $t\in[0,+\infty)$.
\begin{lemma}\label{lem:ref.est}
Let $N\ge4$ and consider the solution furnished by Proposition \ref{prop:cont.arg}. The variables $\widehat{k}_{IJ},\widehat{n},\widehat{e_0\psi}$ satisfy the $C^{N-4}(\Sigma_t)$ bounds:
\begin{align}\label{k.n.e0psi.hat.ref.est}
\|\widehat{k}\|_{C^{N-4}(\Sigma_t)}+\|\widehat{n}\|_{C^{N-4}(\Sigma_t)}+\|\widehat{e_0\psi}\|_{C^{N-4}(\Sigma_t)}\leq C\mathring{\varepsilon}e^{-2Ht}.
\end{align}
for all $t\in[0,+\infty)$.
\end{lemma}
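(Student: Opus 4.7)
The goal is to gain extra exponential decay for $\widehat n,\widehat k,\widehat{e_0\psi}$ beyond what the Sobolev bounds \eqref{CN-2.est2} provide, by exploiting the damping coefficients in their evolution equations: rate $3H$ in \eqref{k.hat.eq} and \eqref{e0psi.hat.eq}, and rate $2H$ in the parabolic equation \eqref{n.hat.eq}. I would proceed in two stages. In the first, I treat \eqref{k.hat.eq} and \eqref{e0psi.hat.eq}, commuted with $\partial^\iota$, $|\iota|\leq N-4$, as linear ODEs in $t$ with damping coefficient $3H+\mathcal O(e^{-2Ht})+\widehat n$. Using \eqref{CN-2.est2}, the leading source for $\widehat k$ comes from the divergence-type terms $n\,e_C\widehat\gamma_{IJC}$, $n\,e_I\widehat\gamma_{CJC}$, bounded by $e^{-Ht}\|\widehat\gamma\|_{C^{N-3}}\leq C\mathring{\varepsilon} e^{-2Ht}$; analogously, $n\,e_C(\widehat{e_C\psi})$ is the leading source for $\widehat{e_0\psi}$. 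Second-derivative terms such as $e_Ie_J\widehat n$ contribute the better rate $e^{-2Ht}\|\widehat n\|_{C^{N-2}}\leq C\mathring{\varepsilon} e^{-\frac{7}{2}Ht}$, and all quadratic nonlinearities are $O(\mathring{\varepsilon}^2 e^{-2Ht})$. Duhamel with the integrating factor for $3H$-damping, together with commutator bounds $[\partial^\iota,\cdot\,]$ controlled via \eqref{CN-2.est2}, then yields $\|\widehat k\|_{C^{N-4}(\Sigma_t)}+\|\widehat{e_0\psi}\|_{C^{N-4}(\Sigma_t)}\leq C\mathring{\varepsilon} e^{-2Ht}$, since the $3H$-damping strictly beats the $e^{-2Ht}$ rate of the source, so the ODE solutions inherit the slower decay of the forcing.

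In the second stage, with these refined bounds on $\widehat k$ and $\widehat{e_0\psi}$ in hand, I return to \eqref{n.hat.eq}. The previously borderline quadratic forcings $n\widehat k_{CD}\widehat k_{CD}$ and $n(\widehat{e_0\psi})^2$ are now $O(\mathring{\varepsilon}^2 e^{-4Ht})$, while every remaining term ($\widehat\gamma_{CCD}e_D\widehat n$, $\mathcal O(e^{-2Ht})\widehat n$, $[\,\ldots\,]\widehat n^2$, $n\mathcal O(e^{-3Ht})\widehat{e_0\psi}$) is $O(\mathring{\varepsilon} e^{-\frac{7}{2}Ht})$ or better in $C^{N-4}$. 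Setting $v:=e^{2Ht}\widehat n$ converts \eqref{n.hat.eq} into a parabolic equation of the form $\partial_t v-e_Ce_C v=Fe^{2Ht}+\mathcal O(e^{-2Ht})\,v$, whose right-hand side has $C^{N-4}$ norm bounded by $C\mathring{\varepsilon} e^{-\frac{3}{2}Ht}$, integrable in time. Applying the scalar parabolic maximum principle (equivalently, an $L^\infty$ energy estimate) to each $\partial^\iota v$ for $|\iota|\leq N-4$, with commutator bounds for $[\partial^\iota,e_Ce_C]$ handled via \eqref{CN-2.est2}, gives $\|v\|_{C^{N-4}(\Sigma_t)}\leq C\mathring{\varepsilon}$ uniformly in $t\geq 0$, which unwinds to $\|\widehat n\|_{C^{N-4}(\Sigma_t)}\leq C\mathring{\varepsilon} e^{-2Ht}$, as asserted.

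The central obstacle is the borderline-resonant character of \eqref{n.hat.eq}: the damping rate $2H$ exactly matches the decay rate of the naive quadratic source $n\widehat k^2+n(\widehat{e_0\psi})^2$, which, using only \eqref{CN-2.est2}, would produce an unacceptable $O(te^{-2Ht})$ growth in $\widehat n$ via Duhamel. The staggered order of the argument---first improving $\widehat k$ and $\widehat{e_0\psi}$ via their stronger $3H$-damping, and only then feeding these bounds back into the $\widehat n$-equation---is precisely what turns the quadratic terms into $e^{-4Ht}$-sources and thereby removes the resonance, enabling the maximum principle to close at the desired $e^{-2Ht}$ rate.
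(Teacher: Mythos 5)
Your argument is correct and, in fact, more careful than the paper's one-line proof of this lemma, which simply establishes the $e^{-2Ht}$ bound for $\widehat k$ via an integrating factor (multiply \eqref{k.hat.eq} by $e^{3Ht}$, integrate, and use the $C^{N-2}$ bounds \eqref{CN-2.est2} on the source) and then asserts ``the argument for $\widehat n, \widehat{e_0\psi}$ is similar.'' You correctly observe that ``similar'' cannot mean ``identical'' for $\widehat n$: the linear damping in \eqref{n.hat.eq} is only $2H$, and under \eqref{CN-2.est2} the quadratic sources $n\widehat k_{CD}\widehat k_{CD}$, $n(\widehat{e_0\psi})^2$ decay at exactly $e^{-2Ht}$, so the naive Duhamel argument would only produce $\widehat n\lesssim t\,e^{-2Ht}$. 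The staggered ordering you propose --- first improving $\widehat k$ and $\widehat{e_0\psi}$ using the stronger $3H$ damping in \eqref{k.hat.eq}, \eqref{e0psi.hat.eq}, then feeding the refined $e^{-2Ht}$ bounds back into \eqref{n.hat.eq} so that the quadratic sources become $e^{-4Ht}$ --- is precisely what removes the resonance and is implicitly required by the paper's abbreviated proof. Two minor remarks on your stage-two argument: once the staggered improvement is in place, invoking a parabolic maximum principle for $v=e^{2Ht}\widehat n$ is unnecessary; the term $e_Ce_C\widehat n=e_C^ae_C^b\partial_a\partial_b\widehat n+\cdots$ already carries two frame factors $e_C^a\sim e^{-Ht}$, so it is $O(\mathring\varepsilon e^{-7Ht/2})$ in $C^{N-4}$, and the equation can be treated exactly as the ODEs in stage one (integrating factor $e^{2Ht}$, source now integrable). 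Also, the term $[2H+\frac{1}{3}n+\mathcal O(e^{-2Ht})]\widehat n^2$ decays at rate $e^{-3Ht}$ (not $e^{-7Ht/2}$) under \eqref{CN-2.est2}, but since $e^{-3Ht}$ still beats $e^{-2Ht}$ strictly, your conclusion stands.
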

\begin{proof}
We only prove the assertion for $\widehat{k}$, the argument for $\widehat{n},\widehat{e_0\psi}$ is similar. Employing the $C^{N-2}(\Sigma_t)$ bounds \eqref{CN-2.est2}, we estimate the LHS of the evolution equation \eqref{k.hat.eq} for $\widehat{k}_{IJ}$. More precisely, we have the inequality:
\begin{align*}
\|\partial_t\widehat{k}_{IJ}+3H\widehat{k}_{IJ}\|_{C^{N-4}(\Sigma_t)}\leq C\mathring{\varepsilon}e^{-2Ht}.
\end{align*}
Using integrating factors, integrating in $[0,t]$, and using the triangle inequality for integrals, we obtain
\begin{align*}
\bigg\|\int^t_0\partial_\tau(e^{3Ht}\widehat{k}_{IJ})d\tau\bigg\|_{C^{N-4}(\Sigma_t)}\leq C\mathring{\varepsilon}e^{Ht}\qquad\Rightarrow\qquad\|\widehat{k}_{IJ}\|_{C^{N-4}(\Sigma_t)}\leq\|\widehat{k}_{IJ}\|_{C^{N-4}(\Sigma_0)}e^{-3Ht}+ C\mathring{\varepsilon}e^{-2Ht},
\end{align*}
as desired.
\end{proof}
With the refined estimates \eqref{k.n.e0psi.hat.ref.est} at our disposal, combined with \eqref{CN-2.est2}, we may now proceed to the

\begin{proof}[Proof of Theorem \ref{thm:exp}]

{\it Proof of \eqref{n.hat.exp}.} From the equation \eqref{n.hat.eq} of $\widehat{n}$, we infer that 
\begin{align}\label{n.hat.exp.est}
\|\partial_t\widehat{n}+2H\widehat{n}\|_{C^{N-6}(\Sigma_t)}\leq C\mathring{\varepsilon} e^{-4Ht}\qquad\Rightarrow\qquad\|\partial_t(e^{2Ht}\widehat{n})\|_{C^{N-6}(\Sigma_t)}\leq C\mathring{\varepsilon} e^{-2Ht}.
\end{align}
This implies that $e^{2Ht}\widehat{n}$ has a $C^{N-6}(\mathbb{T}^3)$ limit, $\widehat{n}^\infty(x)$, as $t\rightarrow+\infty$. Integrating \eqref{n.hat.exp.est} in $[t,+\infty)$, multiplying both sides with $e^{-2Ht}$, and using the triangle inequality for integrals we conclude \eqref{n.hat.exp}.

{\it Proof of \eqref{eIi.hat.exp}, \eqref{gamma.hat.exp}, \eqref{eIpsi.hat.exp}.} First, we use the equations \eqref{eIi.hat.eq}, \eqref{gamma.hat.eq}, \eqref{eIpsi.hat.eq}, and the estimates \eqref{CN-2.est2}, \eqref{k.n.e0psi.hat.ref.est} to deduce the inequalities
\begin{align*}
\notag\|\partial_t(e^{Ht}\widehat{e}_I^i)\|_{C^{N-4}(\Sigma_t)}\leq C\mathring{\varepsilon} e^{-2Ht},\\
\|\partial_t(e^{Ht}\widehat{\gamma}_{IJB})\|_{C^{N-5}(\Sigma_t)}\leq C\mathring{\varepsilon} e^{-2Ht},\\
\notag\|\partial_t(e^{Ht}\widehat{e_I\psi})\|_{C^{N-5}(\Sigma_t)}\leq C\mathring{\varepsilon} e^{-2Ht}.
\end{align*}
Then, we argue in the same manner as above. 

{\it Proof of \eqref{k.hat.exp}.} Recall the definition \eqref{eIi.infty} of $(e_I^a)^\infty(x)$ and the behavior of \eqref{red.var.FLRW} of $(e_I^a)_{FLRW}$ at $+\infty$. Plugging the already derived asymptotic behaviors for $\widehat{\gamma}_{IJB},\widehat{e_I\psi}$ into \eqref{k.hat.eq}, and using \eqref{CN-2.est2}, \eqref{k.n.e0psi.hat.ref.est}, we deduce the estimate:
\begin{align*}
&\big\|\partial_t\widehat{k}_{IJ}+3H\widehat{k}_{IJ}-\big\{(e_C^a)^\infty\partial_a\widehat{\gamma}_{IJC}^\infty-(e_I^a)^\infty\partial_a\widehat{\gamma}_{CJC}^\infty
-\frac{2}{3}\delta_{IJ}(e_C^a)^\infty\partial_a\widehat{\gamma}_{DDC}^\infty-\widehat{\gamma}_{CID}^\infty\widehat{\gamma}^\infty_{DJC}-\widehat{\gamma}_{IJD}^\infty\widehat{\gamma}_{CCD}^\infty\\
&+\frac{1}{3}\delta_{IJ}(\widehat{\gamma}_{CED}^\infty\widehat{\gamma}^\infty_{DEC}+\widehat{\gamma}_{EED}^\infty\widehat{\gamma}_{CCD}^\infty)-\widehat{e_I\psi}^\infty\widehat{e_J\psi}^\infty+\frac{1}{3}\delta_{IJ}\widehat{e_C\psi}^\infty\widehat{e_C\psi}^\infty\big\}(x)e^{-2Ht}\big\|_{C^{N-6}(\Sigma_t)}\leq C\mathring{\varepsilon} e^{-4Ht}
\end{align*}
Adopting the definition \eqref{infty.forcing.k.hat}, we rewrite
\begin{align}\label{k.hat.exp.est}
\notag\|\partial_t\widehat{k}_{IJ}+3H\widehat{k}_{IJ}-HF_{\widehat{k}}(x)e^{-2Ht}\|_{C^{N-6}(\Sigma_t)}\leq &\,C\mathring{\varepsilon} e^{-4Ht}\\
\notag\|\partial_t[\widehat{k}_{IJ}-F_{\widehat{k}}(x)e^{-2Ht}]+3H[\widehat{k}_{IJ}-F_{\widehat{k}}(x)e^{-2Ht}]\|_{C^{N-6}(\Sigma_t)}\leq &\,C\mathring{\varepsilon} e^{-4Ht}\\
\|\partial_t\big\{e^{3Ht}[\widehat{k}_{IJ}-F_{\widehat{k}}(x)e^{-2Ht}]\big\}\|_{C^{N-6}(\Sigma_t)}\leq &\,C\mathring{\varepsilon} e^{-Ht}
\end{align}
Hence, $e^{3Ht}[\widehat{k}_{IJ}-F_{\widehat{k}}(x)e^{-2Ht}]$ has a limit, $\widehat{k}^\infty_{IJ}(x)$, as $t\rightarrow+\infty$. The desired asymptotic behavior now follows by integrating \eqref{k.hat.exp.est} in $[t,+\infty)$, multiplying both sides with $e^{-3Ht}$, and using the triangle inequality for integrals.

{\it Proof of \eqref{e0psi.hat.exp}.} From the equation \eqref{e0psi.hat.eq}, the derived asymptotic behaviors of $\widehat{\gamma}_{IJB},\widehat{e_I\psi}$ and the estimates \eqref{CN-2.est2}, \eqref{k.n.e0psi.hat.ref.est}, we obtain:
\begin{align*}
\big\|\partial_t(\widehat{e_0\psi})+3H(\widehat{e_0\psi})-\big\{(e_C^a)^\infty(x)\partial_a(\widehat{e_C\psi})^\infty-\widehat{\gamma}_{CCD}^\infty(\widehat{e_D\psi})^\infty\big\}(x)\big\|_{C^{N-6}(\Sigma_t)}\leq C\mathring{\varepsilon}e^{-4Ht}.
\end{align*}
We then argue as in the proof of \eqref{k.hat.exp}, using definition \eqref{infty.forcing.e0psi.hat}.

{\it Proof of \eqref{hat.lim.est}.} Rewriting parts of the estimates \eqref{CN-2.est2}, \eqref{k.n.e0psi.hat.ref.est}
\begin{align*}
\|e^{2Ht}\widehat{n}\|_{C^{N-6}(\Sigma_t)},\|e^{Ht}\widehat{e}^i_I\|_{C^{N-4}(\Sigma_t)},\|e^{Ht}\widehat{\gamma}_{IJB}\|_{C^{N-5}(\Sigma_t)},\|e^{Ht}\widehat{e_I\psi}\|_{C^{N-5}(\Sigma_t)}\leq C\mathring{\varepsilon}
\end{align*}
implies the desired bound for $\widehat{n}^\infty(x),(\widehat{e}^i_I)^\infty(x),\widehat{\gamma}_{IJB}^\infty(x),(\widehat{e_I\psi})^\infty(x)$, by taking the limit $t\rightarrow+\infty$. These also imply the bounds 
\begin{align}\label{forcing.hat.est}
\|F_{\widehat{k}}(x)\|_{C^{N-6}(\mathbb{T}^3)},\|F_{\widehat{e_0\psi}}(x)\|_{C^{N-6}(\mathbb{T}^3)}\leq C\mathring{\varepsilon}
\end{align}
Integrating \eqref{k.hat.exp.est} in $[0,t]$, using the triangle inequality for integrals, and \eqref{forcing.hat.est}, we deduce the inequality:
\begin{align}\label{k.hat.exp.est2}
\|e^{3Ht}[\widehat{k}_{IJ}-F_{\widehat{k}}(x)e^{-2Ht}]\|_{C^{N-6}(\Sigma_t)}\leq &\,C\mathring{\varepsilon}+C\mathring{\varepsilon} e^{-Ht}.
\end{align}
Hence, taking the limit $t\rightarrow+\infty$ gives \eqref{hat.lim.est} for $\widehat{k}_{IJ}^\infty(x)$. 
The estimate for $(\widehat{e_0\psi})^\infty(x)$ is similar. This completes the proof of Theorem \ref{thm:exp}.
\end{proof}

\subsection{Expansions for the metric and scalar field at $+\infty$}

The asymptotic expansion of $\psi$ can be computed directly from that of $e_0\psi$. To compute the expansion of the spatial metric components, we introduce the new set of variables
\begin{align}\label{viC}
\partial_i=:v^C_ie_C.
\end{align}
According to \eqref{FLRW.sol}, \eqref{FLRW.asym}, the corresponding FLRW variables equal 
\begin{align}\label{viC.FLRW}
(v^C_i)_{FLRW}=\delta^C_ia(t)=\delta^C_i\mathring{a}\bigg(\frac{1}{2}\sqrt{\frac{\mathring{\varphi}_{FLRW}^2}{\Lambda}+1}+\frac{1}{2}\bigg)^{\frac{1}{3}}e^{Ht}+\delta^C_i\mathcal{O}(e^{-5Ht}).
\end{align}
The two definitions \eqref{ei}, \eqref{viC} imply the relations 
\begin{align}\label{viC.rel}
v^C_ie_C^j=\delta_i^j,\qquad e_C^iv^B_i=\delta_C^B. 
\end{align}
Using also \eqref{eIi.eq}, we obtain the evolution equation: 
\begin{align}\label{viC.eq}
e_0v^C_i=-k_{CB}v^B_i.
\end{align}
Setting $\widehat{v}^C_i:=v^C_i-(v^C_i)_{FLRW}$, we also have the analogous equation to \eqref{eIi.hat.eq}: 
\begin{align}\label{viC.hat.eq}
\partial_t\widehat{v}^C_i-\big[H+\mathcal{O}(e^{-2Ht})+\frac{1}{3}\widehat{n}\big]\widehat{v}^C_i=-n\widehat{k}_{CB}v^B_i+\big[H+\mathcal{O}(e^{-2Ht})+\frac{1}{3}\widehat{n}\big]\widehat{n}\widehat{v}^C_i+\mathcal{O}(e^{Ht})\widehat{n}+\mathcal{O}(e^{Ht})\widehat{n}^2.
\end{align}
Define the norms 
\begin{align}\label{vIC.norms}
\|\widehat{v}\|_{H^M(\Sigma_t)}=\sum_{i,C=1}^3\|\widehat{v}^C_i\|_{H^M(\Sigma_t)},\qquad\|\widehat{v}\|_{C^M(\Sigma_t)}=\sum_{i,C=1}^3\|\widehat{v}^C_i\|_{C^M(\Sigma_t)}.
\end{align}
Since $v_i^C$ can be solved in terms of $e_I^a$ using \eqref{viC.rel}, the initial assumption \eqref{thm.main.init} for $\widehat{e}^a_I$, together with the formula \eqref{red.var.FLRW} for $(e^a_I)_{FLRW}$ and the Sobolev inequality \eqref{Sob},  imply the initial bound:
\begin{align}\label{viC.init}
\|\widehat{v}\|_{H^N(\Sigma_0)}\leq C\mathring{\varepsilon}.
\end{align}
\begin{proposition}\label{prop:metric.asym.inf}
Let $N\ge4$ and consider the solution furnished by Proposition \ref{prop:cont.arg}. The variables $\widehat{v}^C_i$ satisfy the $C^{N-4}(\Sigma_t)$ bound:
\begin{align}\label{viC.hat.ref.est}
\|\widehat{v}\|_{C^{N-4}(\Sigma_t)}\leq C\mathring{\varepsilon}e^{Ht}.
\end{align}
Moreover, the variables $e^{-Ht}\widehat{v}^C_i,\widehat{\psi}$ have $C^{N-4}(\mathbb{T}^3)$ limits at infinity, $(\widehat{v}_i^C)^\infty(x), \widehat{\psi}^\infty(x)$, and the following estimates hold: 
\begin{align}\label{viC.hat.asym}
\begin{split}
\|\widehat{v}_i^C-(\widehat{v}_i^C)^\infty(x)e^{Ht}\|_{C^{N-4}(\Sigma_t)}\leq C\mathring{\varepsilon}e^{-Ht},\qquad\|\widehat{\psi}-\widehat{\psi}^\infty(x)\|_{C^{N-4}(\Sigma_t)}\leq C\mathring{\varepsilon} e^{-2Ht},\\
\|(\widehat{v}_i^C)^\infty(x)\|_{C^{N-4}(\mathbb{T}^3)}\leq C\mathring{\varepsilon},\qquad\|\widehat{\psi}^\infty(x)\|_{C^{N-4}(\mathbb{T}^3)}\leq C\mathring{\varepsilon}.
\end{split}
\end{align}
We also have the limits
\begin{align}\label{viC.infty}
\begin{split}
(v_i^C)^\infty(x):=&\,\lim_{t\rightarrow+\infty}e^{-Ht}v_i^C=(\widehat{v}_i^C)^\infty(x)+\delta^C_i\mathring{a}\bigg(\frac{1}{2}\sqrt{\frac{\mathring{\varphi}_{FLRW}^2}{\Lambda}+1}+\frac{1}{2}\bigg)^{\frac{1}{3}},\\
\psi^\infty(x):=&\,\lim_{t\rightarrow+\infty}\psi=\widehat{\psi}^\infty(x)+\psi^\infty_{FLRW}.
\end{split}
\end{align}

Furthermore, the spatial components of the spacetime metric \eqref{metric} that correspond to the global solution furnished by Proposition \ref{prop:cont.arg}, have the following asymptotic profile:
\begin{align}\label{metric.asym}
\|g_{ij}-e^{2Ht}g^\infty_{ij}(x)\|_{C^{N-4}(\Sigma_t)}\leq C,\qquad g^\infty_{ij}(x)=\delta_{BC}(v_i^B)^\infty(x)(v_j^C)^\infty(x)\in C^{N-4}(\mathbb{T}^3).
\end{align}
\end{proposition}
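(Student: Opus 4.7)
The approach mirrors the refined-estimate-then-integrate strategy used in Lemma \ref{lem:ref.est} and the proof of Theorem \ref{thm:exp}, applied now to the rescaled variable $e^{-Ht}\widehat{v}_i^C$. The main input is the evolution equation \eqref{viC.hat.eq}; once one shows that this rescaled variable stays of order $\mathring{\varepsilon}$ and converges as $t\to+\infty$, the metric expansion becomes algebraic via the orthonormality of $\{e_C\}$.

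\emph{Step 1: growth bound \eqref{viC.hat.ref.est}.} Commute \eqref{viC.hat.eq} with $\partial^\iota$, $|\iota|\leq N-4$, and rewrite as
\begin{align*}
\partial_t\bigl(e^{-Ht}\partial^\iota\widehat{v}_i^C\bigr)=\bigl[\mathcal{O}(e^{-2Ht})+\tfrac{1}{3}\widehat{n}\bigr]\bigl(e^{-Ht}\partial^\iota\widehat{v}_i^C\bigr)+e^{-Ht}\mathfrak{R}^\iota_{iC},
\end{align*}
where $\mathfrak{R}^\iota_{iC}$ collects the differentiated right-hand side of \eqref{viC.hat.eq} together with the Leibniz-rule and commutator terms. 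Each factor appearing in $\mathfrak{R}^\iota_{iC}$ is controlled either by the $C^{N-2}$ bounds \eqref{CN-2.est2}, \eqref{k.n.e0psi.hat.ref.est} or by $\|\widehat{v}\|_{C^{N-4}(\Sigma_t)}$ itself. Using $|\widehat{k}|,|\widehat{n}|\leq C\mathring{\varepsilon}e^{-2Ht}$ on top-order-free factors and splitting $v_i^B=(v_i^B)_{FLRW}+\widehat{v}_i^B$ inside the nonlinear term $-n\widehat{k}_{CB}v_i^B$, the $e^{Ht}$ growth on $(v_i^B)_{FLRW}$ is killed by the $\widehat{k}$-prefactor, and one estimates
\begin{align*}
\|e^{-Ht}\mathfrak{R}^\iota_{iC}\|_{C^0(\Sigma_t)}\leq C\mathring{\varepsilon}e^{-2Ht}\bigl(1+\|e^{-Ht}\widehat{v}\|_{C^{N-4}(\Sigma_t)}\bigr).
\end{align*}
Standard Gronwall, together with the initial bound \eqref{viC.init} and the Sobolev embedding $H^N\hookrightarrow C^{N-2}$ on $\mathbb{T}^3$, now yields \eqref{viC.hat.ref.est}.

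\emph{Step 2: limits of $e^{-Ht}\widehat{v}$ and $\widehat{\psi}$.} Reinserting \eqref{viC.hat.ref.est} into the identity above upgrades the forcing to $\|\partial_t(e^{-Ht}\widehat{v}_i^C)\|_{C^{N-4}(\Sigma_t)}\leq C\mathring{\varepsilon}e^{-2Ht}$, so $e^{-Ht}\widehat{v}_i^C$ is Cauchy in $C^{N-4}(\mathbb{T}^3)$, with limit $(\widehat{v}_i^C)^\infty(x)$; integrating in $[t,+\infty)$ and multiplying through by $e^{Ht}$ gives the first half of \eqref{viC.hat.asym}, while the second formula in \eqref{viC.infty} follows from \eqref{viC.FLRW}. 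Similarly, the relation $\partial_t=ne_0$ yields
\begin{align*}
\partial_t\widehat{\psi}=n(e_0\psi)-(e_0\psi)_{FLRW}=\widehat{e_0\psi}+\widehat{n}(e_0\psi)_{FLRW}+\widehat{n}\,\widehat{e_0\psi},
\end{align*}
which is bounded in $C^{N-4}(\Sigma_t)$ by $C\mathring{\varepsilon}e^{-2Ht}$ via \eqref{k.n.e0psi.hat.ref.est} and \eqref{red.var.FLRW}, producing the limit $\widehat{\psi}^\infty(x)$, the second half of \eqref{viC.hat.asym}, and — via \eqref{FLRW.lim} — the last formula in \eqref{viC.infty}.

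\emph{Step 3: metric expansion.} Since $\{e_C\}$ is ${\bf g}$-orthonormal and $\partial_i=v_i^C e_C$, one has the algebraic identity $g_{ij}=\delta_{BC}v_i^B v_j^C$. Writing $v_i^C=e^{Ht}(v_i^C)^\infty(x)+r_i^C$ with $\|r_i^C\|_{C^{N-4}(\Sigma_t)}\leq C\mathring{\varepsilon}e^{-Ht}$ (from \eqref{viC.hat.asym} combined with the $\mathcal{O}(e^{-5Ht})$ tail in \eqref{viC.FLRW}), the difference $g_{ij}-e^{2Ht}g_{ij}^\infty(x)$ expands as a sum of cross terms of size $e^{Ht}\cdot\mathring{\varepsilon}e^{-Ht}=\mathring{\varepsilon}$ and $(\mathring{\varepsilon}e^{-Ht})^2$, hence is bounded in $C^{N-4}(\Sigma_t)$ by $C$, as in \eqref{metric.asym}. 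The principal obstacle lies in Step 1: one must verify that every factor in $\mathfrak{R}^\iota_{iC}$ either carries fewer than $|\iota|$ derivatives on $\widehat{v}$ or is multiplied by a decaying $\widehat{k}$/$\widehat{n}$ coefficient, so that the $e^{Ht}$ growth inherent in $v_i^B$ is always absorbed — this is the same product-rule bookkeeping already performed in the proofs of Propositions \ref{prop:k.gamma.est} and \ref{prop:psi.est}.
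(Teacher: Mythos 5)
Your proposal is correct and follows essentially the same route as the paper's proof: an integrating-factor/Gronwall (equivalently, bootstrap) argument on \eqref{viC.hat.eq} using \eqref{k.n.e0psi.hat.ref.est} and \eqref{viC.init} gives \eqref{viC.hat.ref.est}, which upgrades to $\|\partial_t(e^{-Ht}\widehat{v}_i^C)\|_{C^{N-4}(\Sigma_t)}\leq C\mathring{\varepsilon}e^{-2Ht}$ and likewise $\|\partial_t\widehat{\psi}\|_{C^{N-4}(\Sigma_t)}\leq C\mathring{\varepsilon}e^{-2Ht}$, after which integration yields the limits and \eqref{metric.asym} follows algebraically from $g_{ij}=\delta_{BC}v_i^Bv_j^C$. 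One cosmetic slip: since the $\mathcal{O}(e^{-5Ht})$ tail in \eqref{viC.FLRW} carries no $\mathring{\varepsilon}$ factor, the remainder bound should read $\|r_i^C\|_{C^{N-4}(\Sigma_t)}\leq Ce^{-Ht}$ rather than $C\mathring{\varepsilon}e^{-Ht}$, but this does not affect \eqref{metric.asym}, which only asserts a bound of $C$.
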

\begin{proof}
The estimate \eqref{viC.hat.ref.est} for $\widehat{v}_i^C$ can be easily bootstrapped using the bounds \eqref{k.n.e0psi.hat.ref.est} for $\widehat{k}_{IJ},\widehat{n}$, the equation \eqref{viC.hat.eq}, and the initial bound \eqref{viC.init}. From the equation \eqref{viC.hat.eq}, using \eqref{viC.hat.ref.est}, we then 
\begin{align}\label{viC.hat.asym2}
\|\partial_t(e^{-Ht}\widehat{v}^C_i)\|_{C^{N-4}(\Sigma_t)}\leq C\mathring{\varepsilon} e^{-2Ht}.
\end{align}
Hence, $e^{-Ht}\widehat{v}^C_i$ has a $C^{N-4}(\Sigma_t)$ limit $(\widehat{v}_i^C)^\infty(x)$, as $t\rightarrow+\infty$, and by integrating \eqref{viC.hat.asym2} in $[t,+\infty)$, we obtain
\begin{align}\label{viC.hat.asym3}
\|e^{-Ht}\widehat{v}^C_i-(\widehat{v}_i^C)^\infty(x)\|_{C^{N-4}(\Sigma_t)}\leq C\mathring{\varepsilon} e^{-2Ht},
\end{align}
which is the desired behavior for $\widehat{v}_i^C$. The third inequality in \eqref{viC.hat.asym} follows from \eqref{viC.hat.asym3} by taking the limit $t\rightarrow+\infty$ and using \eqref{viC.hat.ref.est}. The limit \eqref{viC.infty} for $v_i^C$ is immediate from the definition of $\widehat{v}_i^C$ and \eqref{viC.FLRW}. The assertion \eqref{metric.asym} follows from the formula
\begin{align}\label{gij.viC}
g_{ij}=g(\partial_i,\partial_j)=v_i^Bv_j^C\delta_{BC}
\end{align}
and the asymptotic behavior of $v_i^C=\widehat{v}_i^C+(v_i^C)_{FLRW}$.

For the difference of the scalar fields $\widehat{\psi}=\psi-\psi_{FLRW}$, we use the estimate \eqref{k.n.e0psi.hat.ref.est} for $\widehat{n},\widehat{e_0\psi}$. Recall the definition \eqref{diff.var} of $\widehat{e_0\psi}=e_0\psi-\partial_t\psi_{FLRW}$ to compute:
\begin{align}\label{psi.int}
\begin{split}
\|\partial_t\widehat{\psi}\|_{C^{N-4}(\Sigma_t)}=&\,\|\widehat{e_0\psi}+\partial_t\psi-e_0\psi\|_{C^{N-4}(\Sigma_t)}=\|\widehat{e_0\psi}+\widehat{n}e_0\psi\|_{C^{N-4}(\Sigma_t)}\\
\leq&\,\|\widehat{e_0\psi}\|_{C^{N-4}(\Sigma_t)}+\|\widehat{n}(\widehat{e_0\psi})\|_{C^{N-4}(\Sigma_t)}+\|\widehat{n}(\partial_t\psi_{FLRW})\|_{C^{N-4}(\Sigma_t)}\leq C\mathring{\varepsilon}e^{-2Ht}.
\end{split}
\end{align}
Hence, $\widehat{\psi}$ has a $C^{N-4}(\mathbb{T}^3)$ limit at $t=+\infty$, $\widehat{\psi}^\infty(x)$. Integrating \eqref{psi.int} in $[t,+\infty)$ and using the triangle inequality for integrals, we obtain the second bound in \eqref{viC.hat.asym}. Integrating \eqref{psi.int} in $[0,t]$, using the initial assumption \eqref{thm.main.init} for $\widehat{\psi}$ and the Sobolev inequality \eqref{Sob}, we obtain
\begin{align}\label{psi.est}
\|\widehat{\psi}\|_{C^{N-4}(\Sigma_t)}\leq C\mathring{\varepsilon}+C\mathring{\varepsilon}e^{-2Ht}.
\end{align}
Taking the limit $t\rightarrow+\infty$ gives the last bound in \eqref{viC.hat.asym}. 
The limit \eqref{viC.infty} for $\psi$ is immediate from the definition of $\widehat{\psi}=\psi-\psi_{FLRW}$ and the limit \eqref{FLRW.lim} for $\psi_{FLRW}$. 
This completes the proof of the proposition.
\end{proof}
\appendix
\section{Remarks on the well-posedness of the reduced system}\label{sec:app}

The evolution equations \eqref{k.eq}-\eqref{eIpsi.eq} do not have a well-posed initial value problem by themselves, because the equations \eqref{k.eq}-\eqref{gamma.eq} do not form a symmetric hyperbolic system for $k_{IJ},\gamma_{IJB}$. The problematic term responsible for this issue is $e_I\gamma_{CJC}$ in the RHS of \eqref{k.eq}. It is essentially the term that necessitates the use of the momentum constraint \eqref{momconst} in order to complete our energy argument, see Lemma \ref{lem:high.en.id} and Proposition \ref{prop:k.gamma.est}.  Nevertheless, the basic well-posedness properties required in the proof of Proposition \ref{prop:cont.arg} (local existence, continuation criteria etc.)  can be indirectly derived by considering a modified system. We describe two such arguments that have been successfully applied, in previous works, to variants of the present system, which are easily adaptable to our case.

\subsection{Well-posedness by symmetrization}

It is possible to symmetrize the equations \eqref{k.eq}-\eqref{gamma.eq} by adding appropriate multiples of the constraints. Let us describe this process by re-writing the equations \eqref{k.eq}-\eqref{gamma.eq} in the form
\begin{align}
\label{k.eq.app}e_0k_{IJ}+(n-1-\text{tr}k_{FLRW})k_{IJ}=&\, F_{IJ},\\
\label{gamma.eq.app}e_0\gamma_{IJB}-k_{IC} \gamma_{CJB}=&\,G_{IJB},
\end{align}
for short. We then introduce the modified equations:
\begin{align}
\label{k.eq.app2}e_0k_{IJ}+(n-1-\text{tr}k_{FLRW})k_{IJ}=&\, F_{IJ}+F_{JI},\\
\label{gamma.eq.app2}e_0\gamma_{IJB}-k_{IC} \gamma_{CJB}=&\,G_{IJB}-\delta_{IB}(e_C k_{CJ}+e_Jn-  
k_{JD}\gamma_{CCD}
-
k_{CD}\gamma_{CJD}+e_0\psi e_J\psi)\\
\notag&+\delta_{IJ}(e_C k_{CB}+e_Bn-  
k_{BD}\gamma_{CCD}
-
k_{CD}\gamma_{CBD}+e_0\psi e_B\psi)
\end{align}
The purpose of symmetrizing the RHS of \eqref{k.eq.app2} in $(I;J)$ is to automatically propagate the symmetry of $k_{IJ}$ off of the initial data in a local existence argument via a Picard iteration. The anti-symmetry of the RHS of \eqref{gamma.eq.app2} is manifest. As one can tediously check, the equations \eqref{k.eq.app2}-\eqref{gamma.eq.app2} constitute a symmetric hyperbolic system for $k_{IJ},\gamma_{IJB}$ (viewing the other variables as known coefficients or inhomogeneous terms). Hence, when coupled to the equations \eqref{eIi.eq}-\eqref{eIpsi.eq}, the local well-posedness of the initial value problem for the modified system, with initial data in $H^N(\Sigma_0)$, $N\ge4$, follows by standard theory,\footnote{By virtue of the Sobolev inequality \eqref{Sob}, the regularity class $H^N(\Sigma_t)$, $N\ge4$, implies $C^0$ control of up to two derivatives of the unknowns, which is more than enough for local existence of quasi-linear hyperbolic systems.} since the use of the momentum constraint \eqref{momconst} is no longer required to derive energy estimates.

Notice that if $k_{IJ},\gamma_{IJB}$ come from an actual solution to the Einstein vacuum equations, then \eqref{k.eq.app2}-\eqref{gamma.eq.app2} are equivalent to \eqref{k.eq.app}-\eqref{gamma.eq.app}. It therefore remains to show that the solution to the modified system gives a solution to \eqref{EE}-\eqref{divT}. This last step contains a fair amount of computations and it involves deriving a system of evolution equations for quantities that should vanish, namely, the components of the Ricci tensor that correspond to the constraints and the torsion of the connection induced by the solution $k_{IJ},\gamma_{IJB}$ to the modified equations \eqref{k.eq.app2}-\eqref{gamma.eq.app2}. For $n=1$, ie. a geodesic gauge, the complete argument can be found in \cite[Section 4]{FS2}. For a slightly different procedure applied to similar modified ADM-type systems with coordinate-based variables, see \cite{ST}.

\subsection{Well-posedness by studying a second order system}

An alternative argument involves the derivation of a second order equation for $k_{IJ}$, by taking the $e_0$ derivative of \eqref{k.eq} and using the other reduced equations \eqref{gamma.eq}-\eqref{eIpsi.eq} to replace the $e_0$ derivatives of the variables in the RHS. This results in a non-linear wave-type equation for $k_{IJ}$, coupled to the rest of the reduced equations, which now possess a locally well-posed initial value problem. As with the argument in the previous subsection, the Einstein vacuum equations have to be recovered in the end from the solution to the modified system. Such a procedure has been outlined in detail in the maximal gauge, see \cite[\S10.2]{CK}, ie. an elliptic equation for the lapse $n$ instead of a parabolic one, and for variables expressed with respect to a coordinate system instead of an orthonormal frame. However, the corresponding argument required for the setup in the present paper is similar. We also refer the reader to \cite[Theorem 14.1]{RS3} for a list of well-posedness properties for a similar formulation of Einstein's equations in a constant-mean-curvature gauge.

\end{document}